\documentclass[12pt,a4paper]{article} 
\usepackage{amsmath,amssymb,amsthm,amsfonts,amscd,euscript,verbatim, t1enc, newlfont}
\usepackage{hyperref}
\usepackage{graphicx}
\usepackage[all]{xy}
\hfuzz2pt
 \theoremstyle{plain}
\newtheorem{theo}{Theorem}[subsection]

\newtheorem{pr}[theo]{Proposition}

 \newtheorem{lem}[theo]{Lemma}

\theoremstyle{remark}
\newtheorem{rema}[theo]{Remark}

\theoremstyle{definition}
\newtheorem{defi}[theo]{Definition}

\newcommand\cu{\underline{C}}
\newcommand\du{\underline{D}}
\newcommand\md{\mathcal{D}}
\newcommand\eu{\underline{E}}

\newcommand\gu{\underline{G}}

\newcommand\q{{\mathbb{Q}}}
\newcommand\obj{\operatorname{Obj}}
\newcommand\hw{{\underline{Hw}}}

\newcommand\ab{{Ab}}
\DeclareMathOperator\cha{\operatorname{char}}

\newcommand\zop{{\mathbb{Z}[\frac{1}{p}]}}
\newcommand\n{{\mathbb{N}}}
\newcommand\z{{\mathbb{Z}}}

\newcommand\wchow{w_{Chow}{}}

\newcommand\ns{\{0\}}
\newcommand\al{\alpha}
\newcommand\be{\beta}

\newcommand\de{\delta}

\newcommand\lam{\Lambda}

\newcommand\mgq{\mathcal{M}_\q{}}
\newcommand\mgbm{\mathcal{M}^{BM}} 
\newcommand\mgbms{\mathcal{M}^{BM}_S}
\newcommand\mgbmsz{\mathcal{M}^{BM}_{B}}
\newcommand\mgbmx{\mathcal{M}^{BM}_X}

\newcommand\modd{\operatorname{Mod}}
\newcommand\spe{\operatorname{Spec}}
\newcommand\codim{\operatorname{codim}}

\newcommand\p{\mathbb{P}}
\newcommand\id{\operatorname{id}}
 \newcommand\lan{\langle}
\newcommand\ra{\rangle}
\newcommand\bl{\bigl(} \newcommand\br{\bigl)}

\newcommand\spv{\operatorname{SmPrVar}}

\DeclareMathOperator\kar{\operatorname{Kar}}
 \DeclareMathOperator\ke{\operatorname{Ker}}

\DeclareMathOperator\inli{\varinjlim}
\newcommand\ob{^{-1}}
\newcommand\ihom{{\underline{Hom}}}
\newcommand\hu{\underline{H}}

\newcommand\dmer{DM^{eff}_{R}}
\newcommand\dmers{DM^{eff}_{-,R}}
\newcommand\dmr{DM_{R}}

\newcommand\mgr{\mathcal{M}_R} 

\newcommand\chower{\underline{Chow}^{eff}_R}

\newcommand\thom{t_{hom}}

\newcommand\dmger{{DM^{\,\, {eff}}_{\scalebox{0.7}{gm,R}}}{}}

\newcommand\dmgep
{{DM^{\,\, {eff}}_{\scalebox{0.6}{gm},\zop}}{}}

\newcommand\dmgepr 
{{DM^{\,\, {eff}}_{\scalebox{0.5}{gm},\zop,R}}{}}

\newcommand\dmgepq 
{{DM^{\,\, {eff}}_{\scalebox{0.5}{gm},\zop,\q}}{}}

\newcommand\sz{B}
\newcommand\bb{{\mathcal{B}}}
\newcommand\sss{{\mathcal{S}}}
\newcommand\xx{{\mathcal{X}}}
\newcommand\oo{{\pmb{1}}}

\newcommand\dmgeq{DM^{eff}_{gm,\q}}

\newcommand\sht{SH}

\newcommand\she{SH^{eff}}
\newcommand\mgl{\operatorname{MGl}}
\newcommand\mglmod{\operatorname{MGl}-\modd}
\newcommand\shmgl{D^{\operatorname{MGl}}}

\newcommand\cp{\mathcal{P}}

\numberwithin{equation}{subsection}
\begin{document}

\title
 {Intersecting the dimension  and slice filtrations for (relative) motivic categories}
 \author{Mikhail V.Bondarko\thanks{The  work is supported by Russian Science Foundation grant no. 16-11-10200. 
}}
 \maketitle
\begin{abstract}
In this paper we prove that the intersections of the levels of the dimension filtration on Voevodsky's motivic complexes over a field $k$ with the levels of the slice one are "as small as possible", 
 i.e., that $\obj d_{\le m}\dmers\cap \obj \dmers(i)=\obj d_{\le m-i}\dmers(i)$ (for $m,i\ge 0$ and $R$ being any coefficient ring in which the exponential characteristic of 
 $k$ invertible). This statement is applied to prove that a conjecture of J. Ayoub is equivalent to a certain orthogonality assumption.
We also establish a vast generalization of our intersection result to relative motivic categories (that are required to fulfil a certain list of "axioms"). In the process we prove several new properties of relative motives and of the so-called Chow weight structures for them, and define a new modification of Gabber's dimension functions. 
\end{abstract}

\tableofcontents

\section*{Introduction}
The slice and the dimension filtrations for (various versions of) 
Voevodsky motives are quite well-known (and easy to define); yet our understanding of motives of dimension at most $m$ is quite limited already for $m=2$. In the current paper we compute the intersection of $\obj d_{\le m}\dmers$ with $\obj\dmers(i)$; here  $m,i\ge 0$ and $R$ is any coefficient ring in which the  characteristic of the base field is
 invertible if it is positive. Note that this intersection certainly contains  $\obj d_{\le m-i}\dmers(i)$ (this class is zero if $m< i$), and we prove that this inclusion is actually an equality; this result is completely new. We also prove a vast "relative motivic" generalization of this intersection statement. We use our result (for motives with rational coefficients over a field) to prove that 
Conjecture 4.22 of \cite{ayconj} is equivalent to several other assumptions.

Now we describe our results in more detail. Our first object of study is the category $\dmer$ of (unbounded) $R$-linear motivic complexes over a perfect field $k$ (for $R$ as above). 
The category $d_{\le m}\dmer$ is the localizing subcategory of $\dmer$ generated by the motives of (smooth projective) varieties of dimension at most $m$. For any $i\ge 0$ this subcategory certainly contains $d_{\le m-i}\dmer(i)=d_{\le m-i}\dmer\lan i \ra$; here $\lan i \ra$ denotes the tensor product by the $i$th power of the Lefschetz motif $R\lan 1\ra$ and we set $d_{\le j}\dmer=\ns$ if $j<0$. The question is whether the intersection of $\obj d_{\le m}\dmer$ with $\obj \dmer(i)$ (the $i$th level of the slice filtration for $\dmer$) equals $\obj d_{\le m-i}\dmer(i)$.

The starting point of our arguments is that these categories are endowed with so-called Chow weight structures (that are 
 closely related to Chow weight structures introduced in \cite{bws} and \cite{bzp}). 
Combing this observation with the results of \cite{bkw} we prove that all {\it Chow-bounded below} elements (i.e., those whose "weights" are bounded from below; here we use the homological convention for the "numeration of weights") of $\obj d_{\le m}
\allowbreak \dmer\cap \obj \dmer(i)$ belong $\obj d_{\le m-i}\dmer(i)$. Moreover, all elements of  $\obj d_{\le m}\dmer\cap \obj \dmer(i)$ become {\it right Chow-weight degenerate} in the Verdier quotient $\dmer/d_{\le m-i}\dmer(i)$ (i.e., "their weights are infinitely small" in this localization). We use the latter statement to prove that $\obj d_{\le m}\dmers\cap \obj \dmers(i)=\obj d_{\le m-i}\dmers(i)$; here $\dmers$ is the $R$-linear version of the category of bounded above motivic complexes (so, this is the category that was originally considered by Voevodsky in \cite{1} and \cite[\S14]{vbook}, 
whereas the whole $\dmer$ was only introduced in  \cite{bev} and \cite{degmod}).

Next we recall that in \cite{ayconj} J. Ayoub has introduced several interesting conjectures relating the "slice functors" with the dimension filtration (for $R$ being a $\q$-algebra).
In particular, his Conjecture 4.22 states that the functor $\ihom_{\dmer}(R\lan 1\ra,-) $ 
sends $\obj d_{\le m}\dmer$ into $\obj d_{\le m-1}\dmer$
for any $m\ge 0$. Our results easily yield that this statement is fulfilled whenever the right adjoint $\nu^{\ge 1}\cong \ihom_{\dmer}(R\lan 1\ra,-) \lan 1\ra$ to the embedding $\dmer(1)\to \dmer$ 
sends $\obj d_{\le m}\dmer$ into itself (see Proposition \ref{rayouconjred} for a more general formulation). We also prove that both of these conjectures are equivalent to the non-existence of non-zero morphisms from $\dmger(1)$ into  $d_{\le m}\dmger$ in the localization $\dmger/d_{\le m-1}\dmger$ (so, in this assumption it suffices to consider compact motives only). 

We also establish a vast "relative motivic" generalization of the aforementioned intersection calculation. So, we consider a {\it motivic triangulated} functor $\md$ from the category of essentially finite type $\sz$-schemes into the $2$-category of (compactly generated) triangulated categories, where $\sz$ is a 
 Noetherian separated excellent scheme of finite Krull dimension. Following Definition 2.2.1 of \cite{bondegl}
we define a certain "clever" slice filtration on $\md(\sz)$ (in the terms of the corresponding {\it Borel-Moore objects} and dimension functions; this definition is also related to \cite[\S2]{pelaez}). We also define a certain dimension filtration on $\md(\sz)$ using somewhat similar methods. Under certain restrictions on $\md$ these subcategories of $\md(\sz)$ are endowed with certain Chow weight structures; these weight structures are compatible with the Chow weight structure on the whole $\md(\sz)$ (whose particular cases were considered in \cite{bonivan}, \cite{brelmot}, and \cite{hebpo}). Under the additional assumption of {\it homotopy compatibility} of $\md$
(defined 
 following \cite[\S3.2]{bondegl}) we are able to prove a statement on intersections of the levels of our filtrations that precisely generalizes the aforementioned result over a field. These results can be applied to two distinct versions of relative Voevodsky motives (to the Beilinson motives introduced in \cite{cd} and to the $cdh$-motives of \cite{cdint}) as well as to certain categories of $\mgl$-modules.

Now we describe the original motivation of the author to study these filtration questions. It comes from the paper \cite{bscwh} that was  dedicated to 
 various criteria ensuring that an object $M$ of $ \dmger$ belongs to a given level of a certain filtration on motives (including dimension, slice, weight, and connectivity filtrations).\footnote{The 
criteria were formulated in terms of the new {\it Chow-weight} homology and cohomology theories. These statements vastly generalize the well-known {\it decomposition of the diagonal} results.} This  motivated the author to study the "interaction" between these filtrations, and it turned out that weight structures yield convenient general methods for 
questions of this sort.

Let us  now describe the contents  of the paper. Some more information of this sort can be found at the beginnings of sections.

In \S\ref{sews} we recall some basics on triangulated categories and weight structures on them (along with introducing some notation). This section contains just a few new results, and those   are closely related to \cite{bkw}.

 In \S\ref{sayoconj} we prove the aforementioned statements on motives over a field.

In \S\ref{srelmot} we study motivic triangulated categories satisfying certain conditions. Similarly to \cite{bondegl}, one of our main tools is a certain (co)niveau spectral sequence for the cohomology of Borel-Moore objects. It allows us to reduce quite  interesting vanishing statements to the corresponding vanishing of "motivic cohomology" over fields. We obtain that our subcategories of $\md(\sz)$ are endowed with certain Chow weight structures whenever $\md$ satisfies the Chow-compatibility condition of \S\ref{saddort} (along with a long list of "structural" properties). Under the assumption that $\md$ is also homotopy-compatible we obtain that the intersection result of  \S\ref{sayoconj} carries over to this relative context. Moreover, in this section we introduce certain new types of {\it dimension functions} essentially generalizing Gabber's ones.

The author is deeply grateful to prof. J. Ayoub for an interesting discussion of his conjectures, and to the referee for his helpful remarks.

\section{Weight structures 
and "compactly purely generated" intersections}\label{sews}


This section is mostly  dedicated to recollections; still most of the results of \S\ref{sloc} are new.

In \S\ref{snotata} we introduce some notation and conventions for (mostly, triangulated) categories; we also recall some basics on compactly generated categories.


In \S\ref{ssws} we recall  some basics  
 on weight structures.

In \S\ref{sloc} we 
relate weight structures to localizations and prove a new result on "intersections of purely compactly generated subcategories"  (this is the basic abstract result of this paper).


\subsection{Notation and basics (on compactly generated categories)}\label{snotata}

Assume that $C$ is an additive category and $X,Y\in\obj C$. 

\begin{itemize}
\item For a category $C$ and  $X,Y\in\obj C$ we will write $C(X,Y)$ for the set of morphisms from $X$ into $Y$ in $C$.

\item For a category $C'$ we will write $C'\subset C$ if $C'$ is a full 
subcategory of $C$.

\item 
We will say that $X$ is a {\it
retract} of $Y$ 
 if $\id_X$ can be 
 factored through $Y$.\footnote{If $C$ is triangulated then $X$ is a retract of $Y$ if and only if $X$ is its direct summand.}

\item Let $\hu$ be an additive subcategory  of  $C$. 
Then $\hu$  is said to be  {\it Karoubi-closed}
  in $C$ if it contains all retracts of its objects in $C$.
The full subcategory $\kar_{C}(\hu)$ of  $C$ (here "$\kar$" is for Karoubi) whose objects
are all $C$-retracts of objects  $\hu$  will be
called the {\it Karoubi-closure} of $\hu$ in $C$. 

\item The {\it  Karoubi envelope} $\kar(\hu)$ (no lower index) of an additive
category $\hu$ is the category of ``formal images'' of idempotents in $\hu$.
So, its objects are the pairs $(A,p)$ for $A\in \obj \hu,\ p\in \hu(A,A),\ p^2=p$, and the morphisms are given by the formula 
$$\kar(\hu)((X,p),(X',p'))=\{f\in B(X,X'):\ p'\circ f=f \circ p=f \}.$$ 
 
The correspondence  $A\mapsto (A,\id_A)$ (for $A\in \obj \hu$) fully embeds $\hu$ into $\kar(\hu)$.
 Besides, $\kar(\hu)$ is {\it  Karoubian}, i.e.,  any idempotent morphism in it yields a direct sum decomposition. 

\item $\cu$ below will always denote some triangulated category;
usually it will
be endowed with a weight structure $w$. 

\item For any  $A,B,C \in \obj\cu$ we will say that $C$ is an {\it extension} of $B$ by $A$ if there exists a distinguished triangle $A \to C \to B \to A[1]$.

\item A class $B\subset \obj \cu$ is said to be  {\it extension-closed}
    if it 
		is closed with respect to extensions and contains $0$. 
		The smallest  extension-closed subclass of $\obj
\cu$ containing a given $D\subset \obj \cu$ is called the {\it extension-closure} of $D$.

\item We will call the smallest Karoubi-closed  extension-closed subclass of $\obj \cu$ containing a given $D\subset \obj \cu$ the {\it envelope} of $D$.

\item Given a class $D$ of objects of $\cu$ we will write $\lan D\ra$ for the smallest full Karoubi-closed
triangulated subcategory of $\cu$ containing $D$. We will also call  $\lan D\ra$  the triangulated category {\it densely generated} by $D$.

\item For $X,Y\in \obj \cu$ we will write $X\perp Y$ if $\cu(X,Y)=\ns$. For
$D,E\subset \obj \cu$ we write $D\perp E$ if $X\perp Y$ for all $X\in D,\
Y\in E$; sometimes we will also say that $D$ is (left) {\it orthogonal} to $E$.

\item Given $D\subset\obj \cu$ we  will write $D^\perp$ for the class
$$\{Y\in \obj \cu:\ X\perp Y\ \forall X\in D\}.$$
Dually, ${}^\perp{}D$ is the class
$\{Y\in \obj \cu:\ Y\perp X\ \forall X\in D\}$.
\end{itemize}


We will  need the following simple properties of Verdier localizations. 

\begin{lem}\label{ladj}
Let $\eu\subset \du\subset \cu$ be triangulated categories, $M\in \obj \cu$.  
Assume that the Verdier quotient $\cu'=\cu/\eu$ exists; denote the localization functor $\cu\to \cu/\eu$ by $\pi$ (and recall that it is identical on objects). 
Then the following statements are valid.

1. The localization $\du'=\du/\eu$ exists also, and the restriction of $\pi$ to $\du$ gives a full  embedding of $\du/\eu$ into $ \cu'$.

2.  If $\obj \eu\perp M$ then for any $N\in \obj \cu$ the 
map $\cu(N,M)\to \cu'(\pi(N),\pi(M))$ is a bijection. 

3. Assume 
that a right adjoint $F$ to the embedding $\du\to \cu$ exists. Then $F$ also yields a (well-defined) functor   $\cu'\to \du'$ that is right adjoint to the embedding $\du'\to \cu'$ given by  assertion 1. In particular, 
we have $\pi(F(M)) =0$ 
 if and only if $\pi(\obj \du)\perp\pi (M)$. 

\end{lem}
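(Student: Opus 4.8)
The plan is to handle the three assertions of Lemma~\ref{ladj} in turn; none of them is conceptually deep --- they are all standard facts about Verdier localizations --- so the work consists in arranging the calculus of fractions correctly. For assertion~1, I would first note that the composition $\du\hookrightarrow\cu\xrightarrow{\pi}\cu'$ annihilates $\obj\eu$ and hence factors through the Verdier quotient $\du/\eu$ (whose existence I would verify in passing). To see that the induced functor $\du/\eu\to\cu'$ is a full embedding, I would use roofs: a morphism $\pi(X)\to\pi(Y)$ with $X,Y\in\obj\du$ is represented by $X\xleftarrow{s}Z\xrightarrow{f}Y$ with $\co(s)\in\obj\eu$, and since $\eu\subset\du$, the distinguished triangle relating $Z$, $X$ and $\co(s)$ together with the ``two out of three'' property of the triangulated subcategory $\du$ forces $Z\in\obj\du$; the same remark applies to the equivalence relation identifying roofs. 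Thus morphisms between objects of $\du$ are computed the same way in $\du/\eu$ and in $\cu'$, which yields both the full faithfulness and (granting that $\cu'$ has small Hom-sets) the existence of $\du/\eu$.

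For assertion~2 I would again argue with roofs. A morphism $\pi(N)\to\pi(M)$ is represented by $N\xleftarrow{s}N'\xrightarrow{g}M$ with $\co(s)\in\obj\eu$. Applying $\cu(-,M)$ to the triangle $N'\xrightarrow{s}N\to\co(s)\to N'[1]$ and using $\obj\eu\perp M$ for the objects $\co(s)$ and $\co(s)[-1]$ (both of which lie in $\obj\eu$) shows that $s^{*}\colon\cu(N,M)\to\cu(N',M)$ is bijective. Hence $g=\tilde g\circ s$ for a unique $\tilde g\in\cu(N,M)$, so the given roof is the image of $\tilde g$ under $\cu(N,M)\to\cu'(\pi N,\pi M)$; this gives surjectivity. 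For injectivity, if $\tilde g\in\cu(N,M)$ dies in $\cu'$ then $\tilde g\circ s=0$ for some $s$ with $\co(s)\in\obj\eu$, and injectivity of $s^{*}$ forces $\tilde g=0$.

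For assertion~3 I would start from the fact that the embedding $i\colon\du\to\cu$ is fully faithful, so its right adjoint $F$ satisfies $Fi\cong\id_{\du}$; in particular $F(\obj\eu)\subseteq\obj\eu$. As a right adjoint of a triangulated functor, $F$ is triangulated, so it sends any morphism with cone in $\eu$ to a morphism with cone in $F(\obj\eu)\subseteq\obj\eu$. Therefore $\pi_{\du}\circ F\colon\cu\to\du/\eu$ inverts every morphism inverted by $\pi$, and by the universal property of the localization it factors as $\bar F\circ\pi$ for a well-defined triangulated functor $\bar F\colon\cu'\to\du'$. I would then obtain the adjunction by transporting the unit $\id_{\du}\Rightarrow Fi$ and the counit $iF\Rightarrow\id_{\cu}$ along $\pi_{\du}$ and $\pi$ respectively (using that $\pi$ is the identity on objects, together with $\pi\circ i=\bar i\circ\pi_{\du}$ and $\pi_{\du}\circ F=\bar F\circ\pi$): this produces natural transformations $\id_{\du'}\Rightarrow\bar F\bar i$ and $\bar i\bar F\Rightarrow\id_{\cu'}$ satisfying the triangle identities, so $\bar F$ is right adjoint to the embedding $\bar i$ of assertion~1. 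Finally, for the ``in particular'' part I would use the resulting bijection $\cu'(\pi(D),\pi(M))\cong\du'(\pi(D),\bar F\pi(M))=\du'(\pi(D),\pi(F(M)))$, natural in $D\in\obj\du$: if $\pi(F(M))=0$ the right-hand side is $\ns$ for all $D$, i.e.\ $\pi(\obj\du)\perp\pi(M)$; conversely, if $\pi(\obj\du)\perp\pi(M)$ then taking $D=F(M)$ gives $\id_{\pi(F(M))}=0$, whence $\pi(F(M))=0$.

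The main obstacle is organizational rather than conceptual: it lies in the ``roof-replacement'' arguments behind assertion~1 (keeping all auxiliary objects inside $\du$, which is exactly where $\eu\subset\du$ is used) and in verifying that the transported unit and counit of assertion~3 are well defined and satisfy the triangle identities --- equivalently, in producing the adjunction bijection directly at the level of roofs. Both are part of the standard theory of Verdier quotients, so I would resolve them by citing that theory rather than reproving it.
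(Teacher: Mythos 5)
Your proposal is correct and follows the same route as the paper, which simply cites Neeman's book (\S 2.1 for the roof description of morphisms in Verdier localizations, giving assertions 1 and 2, and Lemma 9.1.5 for the descent of the right adjoint in assertion 3, with the ``in particular'' part being the adjunction isomorphism in the quotients). You have in effect unpacked those references: the roof calculus you carry out is precisely the content of the cited \S 2.1, and your construction of $\bar F$ via $F(\obj\eu)\subset\obj\eu$ and the universal property, together with the transported unit/counit (or, as you note, the direct adjunction bijection on roofs), is the content of Lemma 9.1.5. No gap.
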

\begin{proof}
1,2. Obvious from the description of morphisms in Verdier localizations (see \S2.1 of \cite{neebook} for the latter).

3. The first part of the assertion is given by Lemma 9.1.5 of ibid.; to obtain its second part one should just apply the definition of a right adjoint functor.  
\end{proof}

Now let us assume till the  end of this subsection that $\cu$ is a triangulated category closed with respect to (small) 
coproducts.\footnote{Recall that any triangulated category satisfying this condition is Karoubian; see Proposition 1.6.8 of \cite{neebook}.} 
 
We recall a few notions related to this setting.

We will call a 
subcategory
$\du\subset \cu$ the {\it localizing subcategory generated by} some $D\subset \obj\du$ (or say that $D$ generates $\du$ as a localizing subcategory of $\cu$) if $\du$ is the smallest full strict triangulated subcategory of $\cu$  that is closed with respect to  coproducts and contains $D$. 

Moreover, we will call the smallest strict 
subclass of $\obj \cu$ that it closed with respect to (small) coproducts, extensions, the shift $[1]$, 
 and contains $D$ the {\it pre-aisle generated by $D$} (this terminology was essentially introduced in \cite{talosa}). 

An object $M$ of $\cu$ is said to be {\it compact} if the functor $\cu(M,-)$ commutes with all small coproducts.
We will say that $\cu$ is {\it compactly generated}  if
its full (triangulated) subcategory $\cu^c$ of compact objects is essentially small and $\obj \cu^c$ generates $\cu$ as its own localizing subcategory. 

For a compactly generated $\cu$ and $\hu\subset \cu^c$  will say that  $\cu$ is {\it compactly generated by $\hu$}  if $\obj \hu$ generates $\cu$ as its own localizing subcategory (also). Recall that the latter condition is fulfilled if and only if $\cu^c$ is densely generated by $\obj\hu$; see Lemma 4.4.5 of \cite{neebook}.


The following well-known lemma will be applied several times throughout the paper.

\begin{lem}\label{lwcg} 
Let $D$ be a set of compact objects of $\cu$.

Then for $\du$ being the localizing subcategory generated by $D$ the following statements are valid.

1. The Verdier quotient category $\cu'=\cu/\du$ exists (i.e., its hom-classes are sets); it is closed with respect to coproducts.

2. The localization functor $\pi:\cu\to \cu'$ respects  coproducts and converts compact objects into compact ones. 


3. The restriction of $\pi$ to the triangulated subcategory $\cu^c\subset \cu$ of compact objects of $\cu$ gives a full embedding of $\cu^c/\lan D \ra$ into $\cu'$. 

4. If some class $C\subset \obj \cu$  generates $\cu$  as its own localizing  subcategory then  
$\pi(C)$ generates $\cu'$ as its own localizing  subcategory. 
\end{lem}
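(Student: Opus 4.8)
The plan is to prove the four assertions of Lemma \ref{lwcg} by appealing to the standard machinery of compactly generated triangulated categories (Neeman's book), exactly as the paper's style of proof suggests. All four facts are essentially "well-known", so the work is mostly a matter of assembling the right references and checking that the (small) set $D$ of compact objects places us in the situation where those results apply.

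First I would handle assertion 1. Since $D$ is a \emph{set} of compact objects, the localizing subcategory $\du=\lan D\ra_{\text{loc}}$ is generated by a set, so $\du$ is itself compactly generated (by $D$), and in particular $\du$ is a \emph{well-generated} — indeed perfectly generated — triangulated subcategory of the compactly generated (hence well-generated) category $\cu$. Bousfield localization then applies: the Verdier quotient $\cu'=\cu/\du$ exists with small hom-sets, the localization functor $\pi$ has a right adjoint, and $\cu'$ inherits arbitrary coproducts (this is Neeman's localization theorem, e.g. the results in \S\S8--9 of \cite{neebook}; the relevant statement is that the quotient of a compactly generated category by a localizing subcategory generated by a set of compact objects is again compactly generated). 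For assertion 2, $\pi$ respects coproducts because it is a left adjoint; that $\pi$ sends compact objects to compact objects is exactly the content of the fact that $\du^\perp$ (the essential image of the right adjoint) is closed under coproducts, equivalently that $\pi$'s right adjoint preserves coproducts — this is the standard "$\du$ generated by compacts $\Rightarrow$ smashing localization" argument, see Neeman's Theorem on smashing subcategories, or \cite[Lemma~4.4.5 and \S9.1]{neebook} together with the compact generation of $\du$.

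For assertion 3, I would invoke Neeman's theorem that for a compactly generated $\cu$ and a localizing subcategory $\du$ generated by a set $D$ of compact objects, the induced functor on compact objects $\cu^c/\lan D\ra \to (\cu')^c$ is fully faithful (and its essential image generates $(\cu')^c$ up to direct summands); this is precisely \cite[Theorem~2.1 of Neeman's "Bousfield localization" paper]{neebook}-type statement, available as a theorem in \cite{neebook} (the "Thomason–Neeman localization theorem"). Here one uses that $\lan D\ra = \du\cap \cu^c$, which holds because $\du$ is generated as a localizing subcategory by the compact objects $D$, hence $\du^c=\lan D\ra$ (again \cite[Lemma~4.4.5]{neebook}). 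Finally, assertion 4 is the easiest: if $C$ generates $\cu$ as a localizing subcategory, then since $\pi$ respects coproducts (assertion 2) and is essentially surjective, the localizing subcategory of $\cu'$ generated by $\pi(C)$ contains $\pi(\cu)=\cu'$; more carefully, the preimage under $\pi$ of that localizing subcategory is a localizing subcategory of $\cu$ containing $C$ and $\du$, hence is all of $\cu$, so its image is all of $\cu'$.

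The main obstacle — really the only subtle point — is assertion 3, ensuring the quotient functor restricts to a \emph{fully faithful} embedding $\cu^c/\lan D\ra \hookrightarrow \cu'$ rather than merely a functor; this rests on Neeman's refinement of the Thomason localization theorem for compact objects, and the key input one must verify is the identity $\du\cap\obj\cu^c=\obj\lan D\ra$, i.e. that every compact object of $\du$ is a direct summand of an object built from $D$ by finitely many cones and shifts. Given that, and that $\cu'$ has small hom-sets (assertion 1), the fully faithfulness is the cited theorem. Everything else is formal adjunction and coproduct bookkeeping, so I would keep the write-up to a few lines of citations, matching the "Obvious from \cite{neebook}" tone of the preceding Lemma \ref{ladj}.
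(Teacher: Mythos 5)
Your proposal is correct and follows the same overall strategy as the paper: all four assertions are reduced to the standard facts in Neeman's book. For assertion 1 both you and the paper obtain existence of $\cu'$ from Brown representability for the compactly generated subcategory $\du$ (Theorem 8.3.3 and Proposition 9.1.19 of \cite{neebook}); for assertion 3 both invoke the Thomason--Neeman localization theorem on compact objects; and assertion 4 is indeed immediate once $\pi$ is known to respect coproducts.

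The only place the routes genuinely differ is the preservation of compactness in assertion 2. You argue directly that the localization is smashing: since $\du$ is generated by compacts, $\du^\perp=D^\perp$ is closed under coproducts, so the right adjoint of $\pi$ respects coproducts, and hence $\pi$ sends compact objects to compact objects. The paper instead fixes a compact $M$, passes to the auxiliary localizing subcategory $\du'$ generated by $D\cup\{M\}$, and factors $\pi(M)=i\circ l(M)$ where $l\colon\du'\to\du'/\du$ preserves compactness by Theorem 4.4.9 of \cite{neebook} and $i\colon\du'/\du\to\cu'$ preserves compactness because its right adjoint (from Lemma \ref{ladj}) respects coproducts. Both arguments ultimately rest on the same inputs (Brown representability and the behaviour of coproducts in $\du^\perp$); your "smashing" formulation is a little more global and slightly more compressed, and it does need the unstated but standard fact that the right adjoint to $\pi$ exists (which follows from the same Brown-representability step the paper uses) and that its image is $\du^\perp$. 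If you spell that out, your version is a clean alternative to the paper's more hands-on factorization.
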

\begin{proof}
Assertions 1--3 easily follow 
from the results of \cite{neebook} (cf.  Proposition 4.3.1.3(III.1--2) of \cite{bososn} for a  closely related statement). 

 Indeed, Theorem 8.3.3 of \cite{neebook} implies that $\du$ 
   satisfies the Brown  representability condition (see Definition 8.2.1 of ibid.). Hence  Proposition 9.1.19 of ibid. yields the existence of $\cu'$. 
Moreover,   $\pi$ respects coproducts according to Corollary 3.2.11 of  ibid. The restriction of $\pi$ to $\cu^c$ is a full embedding according to Corollary 4.4.2 of ibid.

To finish the proof of assertion 2 it remains to verify that $\pi(M)$ is compact in $\cu'$ for any 
compact object $M$ of $\cu$. We fix $M$; denote by $\du'$ the localizing subcategory of $\cu$ generated by 
$D\cup \{M\}$. Then the embedding $\du'\to \cu$ (also) possesses a right adjoint $F$ according to 
 Theorem 8.4.4 of ibid. Obviously, $\obj \du'^{\perp}$ is closed with respect to $\cu$-coproducts (cf. Proposition 1.2.6(III) of \cite{bpgws}); hence $F$ respects coproducts according to Proposition 1.3.4(4) of 
 ibid. Next,  there is an  embedding functor $i:\du'/\du\to \cu'$  that possesses a right adjoint $F'$ according to Lemma \ref{ladj}(1,3). $F'$ respects coproducts also (here we invoke Corollary 3.2.11 of \cite{neebook}  once again). Thus $i$ respects the compactness of objects (obvious from the adjunction of $i$ with $F'$). Lastly, the localization $l: \du'\to \du'/\du$ respects the compactness of objects according to Theorem 4.4.9 of ibid.; thus $\pi(M)=i\circ l(M)$ is compact indeed.

Assertions 4 is obvious since $\pi$ respects coproducts (see assertion 2).
\end{proof}

\subsection{Weight structures: reminder}\label{ssws}

\begin{defi}\label{dwstr}

I. A pair of subclasses $\cu_{w\le 0},\cu_{w\ge 0}\subset\obj \cu$ 
will be said to define a weight
structure $w$ for a triangulated category  $\cu$ if 
they  satisfy the following conditions.

(i) $\cu_{w\le 0}$ and $ \cu_{w\le 0}$ are 
Karoubi-closed in $\cu$. 

(ii) {\bf Semi-invariance with respect to translations.}

$\cu_{w\le 0}\subset \cu_{w\le 0}[1]$, $\cu_{w\ge 0}[1]\subset
\cu_{w\ge 0}$.

(iii) {\bf Orthogonality.}

$\cu_{w\le 0}\perp \cu_{w\ge 0}[1]$.

(iv) {\bf Weight decompositions}.

 For any $M\in\obj \cu$ there
exists a distinguished triangle
\begin{equation}\label{wd}
X\to M\to Y
{\to} X[1]
\end{equation} 
such that $X\in \cu_{w\le 0},\  Y\in \cu_{w\ge 0}[1]$.

\end{defi}

We will also need the following definitions.

\begin{defi}\label{dwso}

Let $i,j\in \z$.

\begin{enumerate}
\item\label{id1} The full subcategory  $\hw\subset \cu$ whose object class is $\cu_{w=0}=\cu_{w\ge 0}\cap \cu_{w\le 0}$ 
 is called the {\it heart} of  $w$.

\item\label{id2} $\cu_{w\ge i}$ (resp. $\cu_{w\le i}$,  $\cu_{w= i}$) will denote the class $\cu_{w\ge 0}[i]$ (resp. $\cu_{w\le 0}[i]$,  $\cu_{w= 0}[i]$).

We  will call $\cup_{i\in \z}\cu_{w\ge i}$ the class of $w$-{\it bounded below} objects. 

\item\label{id3} The class $\cu_{w\ge i}\cap \cu_{w\le j}$ will be denoted by $\cu_{[i,j]}$  (so, it equals $\ns$ if $i>j$).

$\cu^b\subset \cu$ will be the category whose object class  is the class $\cup_{i,j\in \z}\cu_{[i,j]}$ of {\it $w$-bounded objects}.

 
\item\label{ideg} 
 We will call  elements of $\cap_{i\in \z}\cu_{w\le i}$ 
{\it  right $w$-degenerate} ones. 

$w$ will be called 
{\it  left non-degenerate} if  $\cap_{l\in \z} \cu_{w\ge l}=\ns$.

\item\label{id5} Let $\cu$ and $\cu'$ 
be triangulated categories endowed with
weight structures $w$ and
 $w'$, respectively; let $F:\cu\to \cu'$ be an exact functor.

$F$ is said to be  {\it  right  weight-exact}  (with respect to $(w,w')$) if it maps $\cu_{w\ge 0}$ into $\cu'_{w'\ge 0}$.
We will say that $F$ is {\it  weight-exact}  if it is also {\it  left  weight-exact}, i.e., if   $F(\cu_{w\le 0}) \subset \cu'_{w'\ge 0}$.
 
\item\label{id6} Let $\hu$ be a 
full additive subcategory of a triangulated category $\cu$.

We will say that $\hu$ is {\it negative} (in $\cu$) if
 $\obj \hu\perp (\cup_{i>0}\obj (\hu[i]))$.

\item\label{igen}
We will say that a weight structure $w$ is generated by a class $\cp\subset \obj \cu$ whenever $\cu_{w\ge 0}=(\cup_{i>0}\cp[-i])^{\perp}$.

\end{enumerate}
\end{defi}

\begin{rema}\label{rstws}

1. A  simple 
 example of a weight structure comes from the stupid filtration on the homotopy category $K(B)$ of  cohomological complexes
 for an arbitrary additive category $B$; see Remark 1.2.3(1) of \cite{bonspkar} for more detail. 

2. In the current paper we use the ``homological convention'' for weight structures; 
it was also used 
 \cite{bgern}, \cite{bscwh}, 
   \cite{bkw}, \cite{bososn}, and \cite{bonspkar}, whereas in 
\cite{bws} 
the ``cohomological convention'' was used. In the latter convention 
the roles of $\cu_{w\le 0}$ and $\cu_{w\ge 0}$ are interchanged, i.e., one
considers   $\cu^{w\le 0}=\cu_{w\ge 0}$ and $\cu^{w\ge 0}=\cu_{w\le 0}$. 
 \footnote{Recall also that D. Pauksztello has introduced weight structures independently in \cite{konk}; he called them
co-t-structures.} 
 
Besides, in \cite{bws} both "halves" of $w$ were required to be  additive. Yet this additional restriction is easily seen to follow from the remaining axioms; see Remark 1.2.3(4) of \cite{bonspkar}.

 3. The orthogonality axiom (iii) in Definition \ref{dwstr} immediately yields that $\hw$ is negative in $\cu$.
 We will formulate a certain converse to this statement 
 below.
\end{rema}

Let us recall some basic  properties of weight structures.

\begin{pr} \label{pbw}
Let $\cu$ be a triangulated category 
endowed with a weight structure $w$. 
Then the following statements are valid.
\begin{enumerate}

 \item\label{iort}
 $\cu_{w\ge 0}=(\cu_{w\le -1})^{\perp}$ and $\cu_{w\le -1}={}^{\perp} \cu_{w\ge 0}$.

\item\label{ibougen} 
For any $i\le j\in \z$ the class $\cu_{[i,j]}$ equals the extension-closure of  $\cup_{i\le m\le j}\cu_{w=m}$. 

Moreover, $\cu^b$ equals 
 the subcategory of $\cu$  densely generated by  $\cu_{w=0}$.
 
 \item\label{ifacth}
Let $M\in \cu_{w\le 0}$, $N\in \cu_{w\ge 0}$, and fix some weight decompositions $X_1[1]{\to} M[1]\stackrel{f[1]}{\to}   Y_1[1]$ 
and $X_2\stackrel{g}{\to}   N\to Y$ of $M[1]$ and $N$, respectively. Then $Y_1,X_2\in \cu_{w=0}$ and
any  morphism from $M$ into $N$  can be presented as $g\circ h\circ f$ for some $h\in \cu(Y_1,X_2)$.

\item\label{iwadj} Assume that $w$ is generated by a class $\cp\subset \obj \cu$; let $w'$ be a weight structure on a triangulated category $\cu'$, and let $F: \cu \leftrightarrows \cu': G$  be an adjoint pair of exact functors. Then the following conditions are equivalent:

(i)  $F$ is left weight-exact.

(ii)  $F(\cp)\subset \cu'_{w'\le 0}$.

(iii) $G$ is right weight-exact.

\item\label{iuni} There is at most one weight structure $w_{\cp}$  generated by a given $\cp\subset \obj \cu$, and $\cp\subset \cu_{w_{\cp}\le 0}$ if $w_{\cp}$ exists.

\item\label{iwemb} If $F:\cu\to \cu'$ is a weight-exact embedding, where $\cu'$ is a triangulated category endowed with a weight structure $w'$, then an object $M$ of $\cu$ belongs to  $\cu_{w\le 0}$ (resp. to $ \cu_{w\ge 0}$) if and only if $F(M)\in \cu'_{w'\le 0}$ (resp. $F(M)\in \cu'_{w'\ge 0}$).

\end{enumerate}
\end{pr}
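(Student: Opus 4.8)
The statements of Proposition~\ref{pbw} are (mild variants of) standard properties of weight structures, essentially contained in \cite{bws} (see also \cite{bonspkar}); the plan is to deduce them from the axioms of Definition~\ref{dwstr} in the order listed, since each part uses the previous ones. For Assertion~\ref{iort}, one inclusion in each equality is nothing but the orthogonality axiom (recall that $\cu_{w\le -1}=\cu_{w\le 0}[-1]$). For the converse, given $M\in(\cu_{w\le -1})^{\perp}$ I would take a suitably shifted weight decomposition $X\to M\to Y\to X[1]$ with $X\in\cu_{w\le -1}$ and $Y\in\cu_{w\ge 0}$; then $\cu(X,M)=\ns$ forces the first arrow to vanish, so $M$ is a retract of $Y$ and lies in $\cu_{w\ge 0}$ by Karoubi-closedness, and the identity $\cu_{w\le -1}={}^{\perp}\cu_{w\ge 0}$ is dual. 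A corollary used throughout is that $\cu_{w\le 0}$ and $\cu_{w\ge 0}$ are extension-closed, being orthogonality classes. Assertion~\ref{ibougen} then follows: the extension-closure of $\cup_{i\le m\le j}\cu_{w=m}$ lies in $\cu_{[i,j]}$ by this extension-closedness, and for the reverse inclusion I would induct on $j-i$, using a weight decomposition of $M\in\cu_{[i,j]}$ to split it as an extension of an object of $\cu_{[i+1,j]}$ by an object of $\cu_{w=i}$; the "densely generated" claim follows from the first part and the fact that $\cu^b$ is itself a Karoubi-closed triangulated subcategory of $\cu$ containing $\cu_{w=0}$.

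For Assertion~\ref{ifacth}, I would first note that $Y_1,X_2\in\cu_{w=0}$: rotating the given weight decomposition of $M[1]$ (resp.\ of $N$) and using the extension-closedness of $\cu_{w\le 0}$ (resp.\ $\cu_{w\ge 0}$) together with the hypothesis $M\in\cu_{w\le 0}$ (resp.\ $N\in\cu_{w\ge 0}$) places $Y_1$ (resp.\ $X_2$) in $\cu_{w\le 0}\cap\cu_{w\ge 0}$. Then, given $\phi\in\cu(M,N)$, the composition of $\phi$ with $X_1\to M$ vanishes because $X_1\in\cu_{w\le -1}$ and $N\in(\cu_{w\le -1})^{\perp}$ by Assertion~\ref{iort}, so the long exact sequence of the triangle $X_1\to M\to Y_1\to X_1[1]$ yields $\phi=\psi\circ f$ for some $\psi\in\cu(Y_1,N)$; next, since $Y_1\in\cu_{w\le 0}$ and the third vertex $Y$ of the chosen weight decomposition of $N$ lies in $\cu_{w\ge 1}$, the orthogonality axiom kills the composition of $\psi$ with $N\to Y$, so $\psi=g\circ h$ for some $h\in\cu(Y_1,X_2)$ and $\phi=g\circ h\circ f$.

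Assertion~\ref{iuni} is formal: if $w_{\cp}$ is generated by $\cp$, the identity $\cu_{w_{\cp}\ge 0}=(\cup_{i>0}\cp[-i])^{\perp}$ determines $\cu_{w_{\cp}\ge 0}$ from $\cp$, whence $\cu_{w_{\cp}\le -1}={}^{\perp}\cu_{w_{\cp}\ge 0}$ (Assertion~\ref{iort}) determines $\cu_{w_{\cp}\le 0}$, giving uniqueness, and $\cp\subset\cu_{w_{\cp}\le 0}={}^{\perp}(\cu_{w_{\cp}\ge 0}[1])$ because $P[-1]\in\cup_{i>0}\cp[-i]$ for $P\in\cp$. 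For Assertion~\ref{iwadj}: (i)$\Rightarrow$(ii) is immediate from $\cp\subset\cu_{w\le 0}$; for (ii)$\Rightarrow$(iii), given $N\in\cu'_{w'\ge 0}$, $P\in\cp$ and $i>0$, adjunction identifies $\cu(P[-i],G(N))$ with $\cu'(F(P)[-i],N)$, which vanishes since $F(P)[-i]\in\cu'_{w'\le -1}$ (by (ii) and semi-invariance) and $N\in(\cu'_{w'\le -1})^{\perp}$, so $G(N)\in(\cup_{i>0}\cp[-i])^{\perp}=\cu_{w\ge 0}$; for (iii)$\Rightarrow$(i), given $M\in\cu_{w\le 0}$ and $N\in\cu'_{w'\ge 1}$, adjunction identifies $\cu'(F(M),N)$ with $\cu(M,G(N))$, and $G(N)\in\cu_{w\ge 1}$ by right weight-exactness of $G$, so this group is zero and $F(M)\in{}^{\perp}\cu'_{w'\ge 1}=\cu'_{w'\le 0}$.

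Assertion~\ref{iwemb} is the crux. Its "only if" part is just weight-exactness of $F$. For "if", suppose $F(M)\in\cu'_{w'\le 0}$ and pick a weight decomposition $X\to M\to Y\to X[1]$ in $\cu$ with $X\in\cu_{w\le 0}$ and $Y\in\cu_{w\ge 1}$; applying the exact, weight-exact functor $F$ gives a distinguished triangle in $\cu'$ with $F(X)\in\cu'_{w'\le 0}$ and $F(Y)\in\cu'_{w'\ge 1}$, so the orthogonality axiom for $w'$ gives $\cu'(F(M),F(Y))=\ns$; since $F$ is faithful, the arrow $M\to Y$ itself vanishes, hence $M$ is a retract of $X\in\cu_{w\le 0}$ and lies in $\cu_{w\le 0}$ by Karoubi-closedness. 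The claim for $\cu_{w\ge 0}$ is dual: choose instead a weight decomposition with $X\in\cu_{w\le -1}$ and $Y\in\cu_{w\ge 0}$, use $\cu'_{w'\le -1}\perp\cu'_{w'\ge 0}$ (Assertion~\ref{iort}) to force $F(X)\to F(M)$, hence $X\to M$, to vanish, and deduce that $M$ is a retract of $Y\in\cu_{w\ge 0}$. The one point needing genuine care is this use of the faithfulness of the embedding to transport a vanishing of morphisms from $\cu'$ back to $\cu$, together with the (standard but slightly fiddly) inductive weight-Postnikov-tower construction behind Assertion~\ref{ibougen}; apart from these, everything reduces to routine manipulations with weight decompositions, Karoubi-closedness, and the orthogonality identities of Assertion~\ref{iort}, so I do not foresee a serious obstacle.
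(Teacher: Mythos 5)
Your proof is correct, and all six parts are established by the standard axiomatic manipulations that the paper leaves to the references. The only place where your route differs noticeably from the paper's intent is Assertion~\ref{iwemb}: the paper says its ``if'' part ``follows immediately from assertion~\ref{iort},'' which suggests the one-line argument that, since $F$ is full and faithful, $\cu(M,N)\cong\cu'(F(M),F(N))=\ns$ for every $N\in\cu_{w\ge 1}$ (using weight-exactness to place $F(N)$ in $\cu'_{w'\ge 1}$), whence $M\in{}^{\perp}\cu_{w\ge 1}=\cu_{w\le 0}$ by Assertion~\ref{iort}, with the dual argument for $\cu_{w\ge 0}$. Your version instead passes through a weight decomposition of $M$, kills one of its arrows via faithfulness, and extracts $M$ as a retract; this is also valid (and only uses faithfulness rather than fullness), but it is a slightly longer detour through Karoubi-closedness where the direct ${}^{\perp}$-characterization does the job in one step. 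Everything else — the retract argument for Assertion~\ref{iort}, the induction for Assertion~\ref{ibougen}, the two-step orthogonality factorization for Assertion~\ref{ifacth}, and the adjunction chase for Assertions~\ref{iwadj} and~\ref{iuni} — matches what one finds in the cited sources.
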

\begin{proof}
The first two assertions were established in \cite{bws} 
(yet pay attention to Remark \ref{rstws}(2)!).
Assertion \ref{ifacth} is precisely  Proposition 1.2.3(9) of \cite{bkw} (and easily follows from the results of \cite{bws} also).

Assertion \ref{iwadj} easily follows from assertion \ref{iort}; see Remark 2.1.5(3) of \cite{bpgws} for more detail.

Assertion \ref{iuni} follows from assertion \ref{iort} easily also; cf.  Remark 2.1.5(1)  of ibid.

The "only if" part of assertion \ref{iwemb} is just the definition of the weight-exactness of $F$. The "if" part follows immediately from assertion \ref{iort}.

\end{proof}

Since in the current paper we are mostly interested in "large" motivic categories, we describe certain properties of weight structures in compactly generated triangulated categories.\footnote{Weight structures described in Proposition \ref{pwexb}(1) were called {\it strongly $\aleph_0$-generated ones} in Remark 4.4.4(1) of \cite{bpgws}. 
Moreover, this weight structure $w$ is said to be {\it 
class-generated by $\hu$} in \cite{bsnew}.} We start with 
a formulation of this sort that will be sufficient for \S\ref{sayoconj}.

\begin{pr}\label{pwexb} 
Assume that $\cu$ is 
compactly generated by its full negative 
additive  subcategory  $\hu$. 

1. Then $\cu$ possesses a (unique) 
weight structure $w$  generated by $\obj \hu$. 
Moreover, $w$ is left non-degenerate, whereas 
 $\hw$  
equals the Karoubi-closure 
of the category of all  coproducts of  objects of $\hu$ (in $\cu$). 

Furthermore, $\cu_{w\ge 0}$ equals the pre-aisle $\cu_+$  generated by $\obj \hu$.

2. Assume that a couple $(\cu',\hu')$  satisfies our assumptions on $(\cu,\hu)$ also. Then any exact functor $F:\cu'\to \cu$ that respects coproducts and sends $\obj \hu'$ into $\obj \hu$ is weight-exact with respect to the weight structure $w'$ generated by $\obj \hu'$ and $w$.


\end{pr}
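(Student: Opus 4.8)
\textbf{Proof proposal for Proposition \ref{pwexb}.}

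The plan is to reduce everything to Proposition \ref{pbw} (especially assertions \ref{iort}, \ref{ibougen}, \ref{iwadj}, \ref{iuni}) together with Brown representability for compactly generated categories. For part~1, I would first define the candidate classes: let $\cu_{w\ge 0}=(\cup_{i>0}\obj\hu[-i])^{\perp}$ (so that $w$ is \emph{generated} by $\obj\hu$ in the sense of Definition \ref{dwso}(\ref{igen}) by construction), and let $\cu_{w\le 0}$ be the envelope (the Karoubi-closed extension-closure, in fact one should take the closure under coproducts as well) of $\cup_{i\ge 0}\obj\hu[i]$; equivalently $\cu_{w\le 0}$ is the pre-aisle generated by $\cup_{i\le 0}\obj\hu[i]$. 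Negativity of $\hu$ gives immediately that $\obj\hu\subset\cu_{w\le 0}\cap\cu_{w\ge 0}$ and, more generally, the orthogonality axiom (iii): since $\hu$ is negative, $\obj\hu\perp\obj\hu[j]$ for $j\ge 1$, and one propagates this orthogonality through coproducts (using compactness of objects of $\hu$), extensions, shifts and retracts to get $\cu_{w\le 0}\perp\cu_{w\ge 0}[1]$. Semi-invariance (ii) and Karoubi-closedness (i) hold by construction. The only serious axiom is (iv), the existence of weight decompositions.

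For the weight decompositions I would use the standard ``cellular tower'' / small-object argument in a compactly generated triangulated category, exactly as in the constructions of weight structures from a negative set of compact generators (this is the content of the references the paper cites around here, e.g. \cite{bpgws}, \cite{bws}): given $M$, build a sequence of distinguished triangles whose cones are coproducts of shifts $\hu[i]$ with $i\le 0$, obtaining a homotopy colimit $X\to M$ with $X$ in the pre-aisle generated by $\cup_{i\le 0}\obj\hu[i]=\cu_{w\le 0}$ and with the third vertex $Y$ lying in $(\cup_{i\ge 0}\obj\hu[i])^{\perp}$, hence (after a shift) in $\cu_{w\ge 0}[1]$. Compactness of the objects of $\hu$ is what makes the vanishing of the relevant $\varprojlim^1$-terms work and guarantees $Y$ is orthogonal to all shifts of $\hu$ in the correct range. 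Once (iv) is known, Proposition \ref{pbw}(\ref{iuni}) gives uniqueness, and $\obj\hu\subset\cu_{w\le 0}$. Left non-degeneracy: if $N\in\cap_{l}\cu_{w\ge l}$ then $N\perp\obj\hu[j]$ for all $j\in\z$, hence $N\perp\langle\obj\cu^c\rangle$-generated localizing subcategory, i.e.\ $N\perp\cu$ since $\hu$ compactly generates $\cu$; so $N=0$. The identification of $\hw$ with the Karoubi-closure of the category of coproducts of objects of $\hu$ follows from the description of $\cu_{w\le 0}$ as a pre-aisle together with the orthogonality $\cu_{w\le 0}\perp\cu_{w\ge 1}$ (any object of $\cu_{w=0}$ is a retract of something built from $\cup_{i\le 0}\obj\hu[i]$, and the ``higher'' pieces are killed by the orthogonality, leaving only a coproduct of objects of $\hu$ up to retract); and the statement $\cu_{w\ge 0}=\cu_+$ is proved by showing both inclusions — $\cu_+\subset\cu_{w\ge 0}$ because $\cu_{w\ge 0}$ is a pre-aisle containing $\obj\hu$ (closure under coproducts of $\cu_{w\ge 0}$ uses compactness of $\hu$ again, via $\cu_{w\ge 0}=(\cup_{i>0}\obj\hu[-i])^{\perp}$ and the fact that such a right-orthogonal is closed under coproducts), and the reverse inclusion via a weight-decomposition argument showing any $M\in\cu_{w\ge 0}$ is already in the pre-aisle.

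For part~2, the cleanest route is via adjunctions and Proposition \ref{pbw}(\ref{iwadj}). Since $F:\cu'\to\cu$ respects coproducts and $\cu'$ is compactly generated, Brown representability (Theorem 8.4.4 of \cite{neebook}, as invoked already in the proof of Lemma \ref{lwcg}) gives a right adjoint $G:\cu\to\cu'$. Now $w'$ is generated by $\obj\hu'$, so by Proposition \ref{pbw}(\ref{iwadj}) applied to the adjoint pair $(F,G)$: $F$ is left weight-exact $\iff$ $F(\obj\hu')\subset\cu_{w\le 0}$, which holds by hypothesis since $F(\obj\hu')\subset\obj\hu\subset\cu_{w\le 0}$. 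For right weight-exactness of $F$, I would argue directly from the generation of $w'$: $\cu'_{w'\ge 0}=(\cup_{i>0}\obj\hu'[-i])^{\perp}$, and $F$ being left weight-exact already handles the $\cu_{w\le 0}$-side, while for the $\cu_{w\ge 0}$-side one uses that $F$ respects coproducts and the pre-aisle description $\cu'_{w'\ge 0}=\cu'_+$ from part~1: $F$ sends the generators $\obj\hu'$ into $\obj\hu\subset\cu_{w\ge 0}$, and $\cu_{w\ge 0}=\cu_+$ is closed under coproducts, extensions and $[1]$, so $F(\cu'_+)\subset\cu_+=\cu_{w\ge 0}$. Hence $F$ is weight-exact.

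\textbf{Main obstacle.} The one genuinely nontrivial point is the construction of weight decompositions in part~1 — i.e.\ running the transfinite ``killing'' construction and verifying that the homotopy-colimit cone lands in the correct orthogonal class, which is where compactness of $\hu$ and the coproduct-closure of $\cu$ are essential. Everything else is bookkeeping with orthogonality and adjunctions. Since this construction is entirely standard in the weight-structure literature (and the paper explicitly points to \cite{bpgws}, \cite{bws}, \cite{bsnew} for it), in the actual write-up I would expect the author to cite it rather than reproduce it; the substance of the proposition is then the identifications of $\hw$, of $\cu_{w\ge 0}$ with the pre-aisle, and the functoriality in part~2.
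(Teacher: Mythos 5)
Your proposal follows essentially the same route as the paper: for existence, the heart, and left non-degeneracy you sketch the standard construction of a weight structure generated by a negative set of compact generators, which the paper simply cites (Theorems 4.5.2 and 4.3.2 of [Bon10] and Corollary 5.4.1(8) of [Bon16a]), and for the pre-aisle identification and for part~2 you give the same short arguments via Proposition~\ref{pbw} and Brown representability for the right adjoint. One small slip worth flagging: with the paper's homological convention $\cu_{w\le 0}$ is closed under $[-1]$, so it should be the coproduct--extension--retract closure of $\bigcup_{i\le 0}\obj\hu[i]$ (a pre-\emph{co}aisle), not the envelope of $\bigcup_{i\ge 0}\obj\hu[i]$ and not a pre-aisle (which is closed under $[1]$); your two candidate descriptions of $\cu_{w\le 0}$ are mutually inconsistent, but the inconsistency is purely a sign/convention slip and does not affect the structure of the argument.
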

\begin{proof}
 1. $w$ exists according to Theorem 4.5.2(I) of \cite{bws} (cf. also the proof of  \cite[Theorem 4.3.2(III)]{bws} where the corresponding description of 
$\cu_{w\ge 0}$ is written down explicitly; weight structures of this type are treated in more detail in \cite{bsnew}). 
$w$ is easily seen to be left non-degenerate; this fact follows from (the categorical dual to) Corollary 5.4.1(8) of \cite{bpgws}. 
$\hw$ is calculated in  Theorem 4.5.2(II.1) of  \cite{bws}. 

Next,   Proposition \ref{pbw}(\ref{iort}) implies that $\cu_{w\ge 0}$ is closed with respect to extensions and $[1]$; it is closed with respect to coproducts since objects of $\hu$ are compact. Thus $\cu_{w\ge 0}$ contains $\cu_+$, and the converse inclusion easily follows from Theorem 4.3.2(V.1) of  ibid. combined with Proposition \ref{pbw}(\ref{ibougen}). 



2. The  left weight-exactness of $F$ follows from Proposition \ref{pbw}(\ref{iwadj}).
Its right weight-exactness is immediate from (the "furthermore" part of) the previous assertion.

\end{proof}

In order to treat the general setting of \S\ref{srelmot} we will also need the following statement.

\begin{pr}\label{pwexbb} 
Let $\cu^c$ be 
a triangulated category. Assume that certain $\cu_-,\cu_+\subset \obj \cu^c$ satisfy 
 the axioms (ii) and (iii) of a weight structure (for $(\cu_{w\le 0},\cu_{w\ge 0})$). 
Let $\hu$ be a full negative subcategory of $\cu^c$  that densely generates it and such that for any $N\in 
\obj \cu$ the following conditions are fulfilled: 

(i) for any $i\in \z$ there exists  a 
 distinguished triangle   $X\to N[i]\to Y[1]$ such that $X$ belongs to the 
 envelope  $\cu^c_{w^c\le 0}$ of  $\cu_-$ and $Y$ belongs to the 
envelope $\cu^c_{w^c\ge 0}$ 
of  $\cu_+$;  

(ii) 
There exist $i_1$ and $i_2\in \z$ such that $N[i_1]\in \cu^c_{w^c\le 0}$ and $N[i_2]\in \cu^c_{w^c\ge 0}$.

Then the following statements are valid.

1. $\cu^c_{w^c\le 0}$ and $\cu^c_{w^c\ge 0}$ yield a bounded weight structure for $\cu^c$.

2. Assume in addition that $\cu$ is a triangulated category that contains $\cu^c$ and is compactly generated by it.
Then 
 $\hu=\hw^c$ (along with $\cu$) satisfies all the conditions of the previous proposition. 
\end{pr}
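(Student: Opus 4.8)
The plan is to deduce part~1 from the general construction machinery recalled in \S\ref{ssws} (in particular from Proposition~\ref{pbw}) by checking the weight structure axioms directly for the pair $(\cu^c_{w^c\le 0},\cu^c_{w^c\ge 0})$ of envelopes. Axioms (ii) and (iii) are essentially given: semi-invariance under translation holds because the envelopes of $\cu_-$ and $\cu_+$ inherit the corresponding inclusions from the hypotheses on $\cu_-,\cu_+$ (the shift of an envelope is the envelope of the shift, and envelopes are monotone), and orthogonality $\cu^c_{w^c\le 0}\perp\cu^c_{w^c\ge 0}[1]$ follows from $\cu_-\perp\cu_+[1]$ because $\perp$ is stable under extensions, retracts, and coproducts in each argument separately — here one uses that $\hu$ is negative so that the relevant Hom-vanishing propagates through the envelopes. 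The Karoubi-closedness axiom (i) is built into the definition of "envelope". So the content of part~1 is the existence of weight decompositions together with boundedness. First I would use hypothesis~(i): for an arbitrary $M\in\obj\cu^c$, applying it with $i=0$ produces a triangle $X\to M\to Y[1]\to X[1]$ with $X\in\cu^c_{w^c\le 0}$ and $Y\in\cu^c_{w^c\ge 0}$, i.e.\ $Y[1]\in\cu^c_{w^c\ge 1}\subset\cu^c_{w^c\ge 0}[1]$; this is exactly a weight decomposition. Hypothesis~(ii) then gives boundedness of every object, so $\cu^c=(\cu^c)^b$ and the weight structure is bounded. One should double-check that the envelopes are exactly the classes $\cu^c_{w^c\le 0},\cu^c_{w^c\ge 0}$ of the resulting $w^c$ and not merely contained in them; this follows from Proposition~\ref{pbw}(\ref{iort}) and (\ref{ibougen}) — the bounded weight structure is determined by its heart, and its heart is the additive (idempotent-complete) subcategory generated by $\hu$, whose envelope in each direction recovers the stated classes. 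I would also invoke Proposition~\ref{pbw}(\ref{iuni}) to get uniqueness.

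For part~2, the goal is to verify the hypotheses of Proposition~\ref{pwexb} for the pair $(\cu,\hu)$, namely that $\cu$ is compactly generated by the full negative additive subcategory $\hu$, and then to identify $\hu$ with $\hw^c$ (equivalently, with a set of generators whose heart-closure is $\hw^c$). That $\cu$ is compactly generated by $\hu$ is immediate from the assumption that $\cu^c$ is the subcategory of compact objects of $\cu$, that $\cu^c$ is densely generated by $\hu$, and Lemma~4.4.5 of \cite{neebook} (quoted in \S\ref{snotata}): dense generation of $\cu^c$ by $\obj\hu$ is equivalent to $\obj\hu$ generating $\cu$ as a localizing subcategory. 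Negativity of $\hu$ is a standing hypothesis. Hence Proposition~\ref{pwexb}(1) applies and yields a weight structure $w$ on $\cu$ generated by $\obj\hu$, with heart the Karoubi-closure in $\cu$ of the coproducts of objects of $\hu$. It remains to check that this $w$ restricts to the $w^c$ of part~1 on $\cu^c$, and that $\hw^c$ is the subcategory with object class $\cu_{w=0}\cap\obj\cu^c$; but this is a standard compatibility: the weight structure on $\cu^c$ obtained by restriction is bounded, generated by $\obj\hu$, hence equals $w^c$ by the uniqueness in Proposition~\ref{pbw}(\ref{iuni}), and by Proposition~\ref{pbw}(\ref{iwemb}) applied to the inclusion $\cu^c\hookrightarrow\cu$ (which is weight-exact) an object of $\cu^c$ lies in $\cu_{w\le 0}$ (resp.\ $\cu_{w\ge 0}$) iff it lies in $\cu^c_{w^c\le 0}$ (resp.\ $\cu^c_{w^c\ge 0}$). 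Thus $\hu$ densely generates $\cu^c=\cu^c_b$ and $\hw^c$ is its heart, which is what "$\hu$ (along with $\cu$) satisfies all the conditions of the previous proposition" amounts to.

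The main obstacle I anticipate is \emph{not} the abstract axiom-chasing but making hypothesis~(i) do its job cleanly — in particular ensuring that the triangles it provides are genuinely functorial-enough (or at least simultaneously available for all shifts) to conclude that the \emph{envelopes} themselves, rather than some a priori larger pre-aisles, are closed under weight decomposition. Concretely, one must show that weight decompositions of objects of $\cu^c$ can be chosen inside $\cu^c$ (so that $w^c$ is literally a weight structure on $\cu^c$ and not just the restriction of one on $\cu$), and that the "extension-closure equals envelope" identifications of Proposition~\ref{pbw}(\ref{ibougen}) can be bootstrapped from $\hu$ through all of $\cu^c_b=\cu^c$ using only (i) and (ii). I would handle this by an induction on the "length" of a bounded object with respect to $\hu$: hypothesis~(ii) gives a finite window $[i_1,i_2]$, and repeatedly splitting off weight-$i_1$ pieces via (i) (each step staying in $\cu^c$ by compactness and by the envelope hypotheses on $X$ and $Y$) expresses $M$ as an iterated extension of shifts of objects of the envelope of $\hu$, which is exactly what Proposition~\ref{pbw}(\ref{ibougen}) needs. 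Everything after that is bookkeeping with the already-quoted results.
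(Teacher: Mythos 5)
Your proposal is mathematically correct and, in substance, does what the paper's citation of Theorem 2.1.1(II) of \cite{bonspkar} encapsulates: you verify that the envelopes of $\cu_-$ and $\cu_+$ satisfy axioms (i)--(iii) of a weight structure (routine propagation of the given semi-invariance and orthogonality through extension- and Karoubi-closures — though your aside that ``one uses that $\hu$ is negative'' here is a red herring, since the orthogonality is directly given by the hypothesis on $\cu_-,\cu_+$; and coproducts play no role inside $\cu^c$), that hypothesis (i) with $i=0$ provides the required weight decomposition of any $M\in\obj\cu^c$, and that hypothesis (ii) gives boundedness. So for part 1 you give an inline argument where the paper gives a reference; the content is the same.

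Two of your anticipated obstacles are, however, not real obstacles under the hypothesis as stated. First, there is nothing to ``double-check'' about the envelopes being ``exactly'' the classes of the resulting $w^c$: the statement asks you to show that this specific pair of classes satisfies the axioms of Definition \ref{dwstr}, and once you have done so they \emph{are} the $w^c$-classes tautologically — uniqueness via Proposition \ref{pbw}(\ref{iuni}) is not needed for this. Second, the iterated-splitting/induction at the end of your proposal is unnecessary when condition (i) is assumed for every $N\in\obj\cu^c$ (read the ``$\obj\cu$'' in the statement as a typo); in that case the $i=0$ instance already supplies a weight decomposition of every object, and ``for any $i\in\z$'' is redundant. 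That ``for any $i$'' is there precisely so that the statement can also be applied when the decomposition hypothesis is only verified on a generating family (as in Proposition \ref{pgabber}(2) feeding Theorem \ref{tchowcap}); in that more demanding reading a bootstrap of the kind you sketch \emph{would} be needed, and that is exactly what the cited Theorem 2.1.1(II) of \cite{bonspkar} is built for. For part 2, your core arguments (negativity of $\hw^c$ from Remark \ref{rstws}(3) and compact generation of $\cu$ from dense generation of $\cu^c$ plus Lemma 4.4.5 of \cite{neebook}) match the paper's short proof. Note, though, that ``$\hu=\hw^c$'' in the statement is a re-assignment of notation — ``take $\hu$ to be $\hw^c$ when applying Proposition \ref{pwexb}'' — not an identity between the original $\hu$ and the heart that you should try to establish (in the applications the original $\hu$ is typically a strict subcategory of $\hw^c$). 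Your additional checks on the weight structure induced on $\cu$ are harmless but not part of what the proposition asserts.
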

\begin{proof} 1. 
Immediate from 
Theorem 2.1.1(II) of \cite{bonspkar}.

2.  $\hw^c$ is negative in $\cu^c\subset \cu$ by Remark \ref{rstws}(3). Next, $\cu_{w^c= 0}$ densely
generates $\cu$ by Proposition \ref{pbw}(\ref{ibougen}); thus it compactly generates $\cu$.

\end{proof}

\subsection{On 
intersections of "purely compactly generated" subcategories}\label{sloc}

We call a category $\frac A B$ the {\it factor} of an additive
category $A$
by its full additive subcategory $B$ if $\obj \bl \frac A B\br=\obj
A$ and $(\frac A B)(X,Y)= A(X,Y)/(\sum_{Z\in \obj B} A(Z,Y) \circ
A(X,Z))$.

\begin{pr}\label{ploc}
I. Let $\du$ be a 
triangulated subcategory of
$\cu$; suppose
 that $w$ induces a weight structure $w_{\du}$ on $\du$
(i.e., $\obj \du\cap \cu_{w\le
 0}$ and $\obj \du\cap \cu_{w\ge
 0}$ give a weight structure for $\du$).
Assume that  the Verdier quotient of $\cu$
by $\du$ exists and 
denote by $\pi$ the localization functor $\cu\to \cu/\du$.

Then the following statements are valid.

1. $w$ induces a weight structure on  $\cu/\du$, i.e.,  the $\cu/\du$-Karoubi-closures of $\pi(\cu_{w\le
 0})$ and $\pi(\cu_{w\ge 0})$  
give a weight structure $w_{\cu/\du}$ on $\cu/\du$ (and so, $\pi$ is weight-exact with respect to $w$ and $w_{\cu/\du}$).

2. 
The heart ${\underline{Hw}}_{\cu/\du}$ 
of 
 $w_{\cu/\du}$ 
is the Karoubi-closure  of (the natural image of) $\frac { \hw} {\hw_{\du}}$ in
$\cu/\du$.
In particular, for any $M^0,N^0\in \cu_{w= 0}$ the homomorphism $\cu(M^0,N^0)\to \cu/\du(\pi(M^0),\pi(N^0))$ 
 is surjective.

3. For any $M\in \cu_{w\le 0}$ and  $N\in \cu_{w\ge 0}$
the homomorphism $\cu(M,N)\to \cu/\du(\pi(M),\pi(N))$ 
 is surjective.

II. Assume that $\cu$, $\hu\subset \cu$, $F:\cu'\to \cu$, and $\hu'\subset \cu$   are  as in Proposition \ref{pwexb}; assume that the category $\du=\cu'$ is a full subcategory of $\cu$. 

1. Then the weight structure $w$ on $\cu$ given by Proposition \ref{pwexb}(1) restricts to $\du$. 

2. 
The localization $\cu/\du$ exists and is closed with respect to coproducts, the localization functor $\pi:\cu\to \cu/\du$ respects coproducts, 
and $\pi(\obj\hu)$ compactly generates $\cu/\du$.

3. The corresponding category  $\pi(\hu)\subset \cu/\du$ is negative in $\cu/\du$, and the weight structure $w_{\cu/\du}$ given by assertion I.1 coincides with the weight structure $w'$ on $\cu/\du$  generated by  $\pi(\obj\hu)$.\footnote{The existence of the latter weight structure is guaranteed by Proposition \ref{pwexb}(1) provided that $\pi(\hu)$ is negative.}


\end{pr}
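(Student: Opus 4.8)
The plan is to prove the three assertions of Part II essentially by combining Part I with Proposition \ref{pwexb} and Lemma \ref{lwcg}, checking the hypotheses of each carefully. I would first deal with II.1: by Proposition \ref{pwexb}(2) applied to the embedding $F:\du=\cu'\to\cu$ (which respects coproducts and sends $\obj\hu'$ into $\obj\hu$ by hypothesis), this embedding is weight-exact with respect to the weight structure $w'$ on $\cu'$ generated by $\obj\hu'$ and $w$. Hence $\obj\cu'\cap\cu_{w\le0}\supset \cu'_{w'\le0}$ and similarly for $\cu'_{w'\ge0}$; the reverse inclusions follow from Proposition \ref{pbw}(\ref{iwemb}) (the "if" part of weight-exact embeddings detecting the weight ranges). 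Thus $w$ restricts to $\du$, and the restriction agrees with $w'$.

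For II.2, since $\hu\subset\cu^c$ and $\du$ is the localizing subcategory of $\cu$ generated by $\obj\hu'$ (a set of compact objects — here I would note that $\hu'$ is essentially small, being a full subcategory of $\cu^c$, so we may replace $\obj\hu'$ by a set), Lemma \ref{lwcg}(1,2) gives that $\cu/\du$ exists, is closed under coproducts, and that $\pi$ respects coproducts and sends compact objects to compact ones. Since $\obj\hu$ generates $\cu$ as its own localizing subcategory (this is what "compactly generated by $\hu$" means, by the definition recalled before Lemma \ref{lwcg}), Lemma \ref{lwcg}(4) yields that $\pi(\obj\hu)$ generates $\cu/\du$ as its own localizing subcategory; combined with the compactness of $\pi(\obj\hu)$ from Lemma \ref{lwcg}(2) we conclude $\pi(\hu)$ compactly generates $\cu/\du$.

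For II.3, negativity of $\pi(\hu)$ in $\cu/\du$: here I would argue that for $X,Y\in\obj\hu$ and $i>0$, any morphism $\pi(X)\to\pi(Y)[i]$ in $\cu/\du$ is detected by $\cu(X,Y[i])$ via a surjection — but this requires care, so the cleanest route is to invoke Part I instead. By II.1 the weight structure $w$ restricts to $\du$, so Proposition \ref{ploc}(I.1) applies and endows $\cu/\du$ with the induced weight structure $w_{\cu/\du}$, with $\pi$ weight-exact. Then $\pi(\obj\hu)\subset\pi(\cu_{w\le0})\subset(\cu/\du)_{w_{\cu/\du}\le0}$, and for $i>0$, $\pi(\obj\hu[i])\subset(\cu/\du)_{w_{\cu/\du}\le -i}\subset(\cu/\du)_{w_{\cu/\du}\le -1}$, which is orthogonal to $(\cu/\du)_{w_{\cu/\du}\ge0}\ni\pi(\obj\hu)$ by Proposition \ref{pbw}(\ref{iort}); hence $\pi(\hu)$ is negative. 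Now Proposition \ref{pwexb}(1) guarantees the existence of the weight structure $w'$ on $\cu/\du$ generated by $\pi(\obj\hu)$. To identify $w_{\cu/\du}=w'$ I would use the uniqueness in Proposition \ref{pbw}(\ref{iuni}): it suffices to check that $w_{\cu/\du}$ is generated by $\pi(\obj\hu)$, i.e.\ $(\cu/\du)_{w_{\cu/\du}\ge0}=(\cup_{i>0}\pi(\obj\hu)[-i])^\perp$. The inclusion "$\subset$" is the orthogonality just used; for "$\supset$", by Proposition \ref{pwexb}(1) applied in $\cu/\du$ one has that the right-hand side is the $w'$-coconnective class, and one checks $(\cu/\du)_{w_{\cu/\du}\ge0}$ contains $\pi(\obj\hu)$, is closed under coproducts (as $\pi$ respects coproducts and $\pi(\obj\hu)$ is compact) and extensions and $[1]$, hence contains the pre-aisle generated by $\pi(\obj\hu)$, which is $(\cu/\du)_{w'\ge0}$ by the "furthermore" clause of Proposition \ref{pwexb}(1); the reverse containment follows symmetrically from weight-exactness of $\pi$ and Proposition \ref{pbw}(\ref{ibougen}).

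The main obstacle I anticipate is the identification $w_{\cu/\du}=w'$ in II.3: one must be sure that the two a priori different constructions — "induce from $\cu$ via the localization" versus "generate afresh in the quotient by $\pi(\obj\hu)$" — coincide, and the smoothest argument is the uniqueness of generated weight structures (Proposition \ref{pbw}(\ref{iuni})) together with a pre-aisle comparison; getting the set-theoretic bookkeeping right (passing to a small skeleton of $\hu'$, Brown representability for $\du$) is routine but must be acknowledged. Everything else is a direct bookkeeping application of the cited results.
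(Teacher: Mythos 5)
Your treatment of Part II is correct and matches the paper's strategy: II.1 is Proposition \ref{pwexb}(1,2) plus the detection statement \ref{pbw}(\ref{iwemb}); II.2 is Lemma \ref{lwcg}(1,2,4); and II.3 is negativity of $\pi(\hu)$ (via weight-exactness of $\pi$) followed by identification of the two weight structures. The only real divergence is in how you identify $w_{\cu/\du}=w'$: you pass through Proposition \ref{pbw}(\ref{iuni}) and a pre-aisle comparison invoking the ``furthermore'' clause of Proposition \ref{pwexb}(1). The paper's argument is shorter and avoids the pre-aisle altogether: $\pi$ is weight-exact with respect to $(w,w')$ by Proposition \ref{pwexb}(2), so the $\cu/\du$-Karoubi-closures of $\pi(\cu_{w\le 0})$ and $\pi(\cu_{w\ge 0})$ (that is, $(\cu/\du)_{w_{\cu/\du}\le 0}$ and $(\cu/\du)_{w_{\cu/\du}\ge 0}$) are contained in $(\cu/\du)_{w'\le 0}$ and $(\cu/\du)_{w'\ge 0}$ respectively, and then Proposition \ref{pbw}(\ref{iort}) applied to $w'$ forces both inclusions to be equalities. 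Both routes are valid; yours is somewhat longer and (as you note) requires a small additional check that $(\cu/\du)_{w_{\cu/\du}\ge0}$ is closed under coproducts, which does hold but is not free.

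The genuine omission is Part I. You explicitly restrict to Part II and use Part I as a black box, but the statement you were asked to prove includes I.1--I.3. The paper cites I.1--I.2 to Proposition 8.1.1 of \cite{bws}, so deferring those is defensible, but I.3 is proved in the paper and you do not address it. For the record: given $M\in \cu_{w\le 0}$ and $N\in\cu_{w\ge 0}$, choose weight decompositions in $\cu$ producing $Y_1, X_2\in\cu_{w=0}$ as in Proposition \ref{pbw}(\ref{ifacth}); applying $\pi$ (weight-exact by I.1) yields weight decompositions of $\pi(M)$ and $\pi(N)$ in $\cu/\du$, so any $\cu/\du$-morphism $\pi(M)\to\pi(N)$ factors as $\pi(g)\circ h'\circ\pi(f)$ with $h'\in\cu/\du(\pi(Y_1),\pi(X_2))$; the ``in particular'' part of I.2 lifts $h'$ to some $h\in\cu(Y_1,X_2)$, and $g\circ h\circ f$ is the required preimage. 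You should include this (or at least note explicitly that you are citing Part I).
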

\begin{proof}
I. Assertions 1 and 2 are 
 contained in Proposition 8.1.1 of \cite{bws}.

Assertion 3 is an easy consequence of the ("in particular" part of the) previous assertion combined with Proposition \ref{pbw}(\ref{ifacth}). 

II.1. 
Proposition \ref{pwexb}(1) gives a weight structure on $\du$ and part 2 of that proposition ensures that this weight structure is a restriction of $w$. 

2. Immediate from Lemma \ref{lwcg}(1,2,4).

3. According to assertion I.1, $\pi(\obj \hu)$ is contained in the heart of  $w_{\cu/\du}$; hence it is negative (in $\cu/\du$; see Remark \ref{rstws}(3)). Next, $\pi$ is weight-exact with respect to $(w,w')$ according to Proposition \ref{pwexb}(2). 
Applying Proposition \ref{pbw}(\ref{iort}) to $w'$ 
we easily obtain that $w'=w_{\cu/\du}$.

\end{proof}

Now we combine this statement with a result from \cite{bkw}.

\begin{pr}\label{pintersw} 
Let $\cu$ and $\hu$ be as in Proposition \ref{pwexb}(1); let $\hu_1,\hu_2$, and $\hu_3$ be full additive subcategories of $\hu$.
Denote by $\cu_i$  the localizing subcategories of $\cu$ generated by $\hu_i$ (for $i=1,2,3$)  and
assume that any morphism from (an object of) $\hu_1$ into $\hu_2$ vanishes in the Verdier quotient $\cu'$ of $\cu $ by $\cu_3$.\footnote{Note that this quotient exists according to Lemma \ref{lwcg}(1).}

Then  all elements of $\obj \cu_1\cap \obj \cu_2$ become right degenerate in  $\cu'$ (with respect to the 
weight structure $w'$ 
given by Proposition \ref{ploc}(I.1)). 

Moreover, $w$-bounded below elements of $\obj \cu_1\cap \obj \cu_2$ belong to $\obj \cu_3$.
\end{pr}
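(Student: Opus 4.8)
The plan is to pass to the Verdier quotient $\cu'=\cu/\cu_3$ and to establish both assertions there, reducing the "moreover" part to a single induction over weights. By Lemma \ref{lwcg}(1) the category $\cu'$ exists; write $\pi:\cu\to\cu'$ for the localization functor. Proposition \ref{ploc}(II) tells us that $w$ restricts to $\cu_3$, that $\pi$ respects coproducts, that $\pi(\obj\hu)$ compactly generates $\cu'$ while $\pi(\hu)$ is negative in $\cu'$, and that the induced weight structure $w'=w_{\cu/\cu_3}$ is precisely the one generated by $\pi(\obj\hu)$; in particular $\pi$ is weight-exact and $w'$ is left non-degenerate (Proposition \ref{pwexb}(1)), and $\ke\pi=\cu_3$ because $\cu_3$, being localizing, is Karoubi-closed. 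I would then write $\cu_i'$ for the localizing subcategory of $\cu'$ generated by $\pi(\obj\hu_i)$ ($i=1,2$), observe that $\pi(\obj\cu_i)\subset\obj\cu_i'$, and note that the pairs $(\cu_i',\pi(\hu_i))$ satisfy the hypotheses of Proposition \ref{pwexb}: thus each $\cu_i'$ carries a weight structure $w_i'$ generated by $\pi(\obj\hu_i)$ (Proposition \ref{pwexb}(1)), and applying Proposition \ref{pwexb}(2) to the coproduct-preserving embedding $\cu_i'\hookrightarrow\cu'$ together with Proposition \ref{pbw}(\ref{iwemb}) shows that $w_i'$ is the restriction of $w'$. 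Finally, reading the hypothesis in $\cu'$ as $\pi(\obj\hu_1)\perp\pi(\obj\hu_2)$ and combining it with the negativity of $\pi(\hu)$, one obtains $\pi(\obj\hu_1)\perp\pi(\obj\hu_2)[j]$ for every $j\ge 0$.

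Next I would prove that every $w'$-bounded below object $N$ of $\obj\cu_1'\cap\obj\cu_2'$ is zero. After a shift one may assume $N\in\cu'_{w'\ge 0}$, and the point is to prove $N\in\cu'_{w'\ge k}$ for all $k\ge 0$ by induction, the base case being the assumption. For the inductive step, replacing $N$ by $N[-k]$ (legitimate since $\cu_1',\cu_2'$ are shift-stable) reduces us to the implication: $N\in\obj\cu_1'\cap\obj\cu_2'\cap\cu'_{w'\ge 0}\ \Rightarrow\ N\in\cu'_{w'\ge 1}$. Here I would use that, by the "furthermore" clause of Proposition \ref{pwexb}(1), $N$ lies in $(\cu_2')_{w_2'\ge 0}=\obj\cu_2'\cap\cu'_{w'\ge 0}$, which equals the pre-aisle generated by $\pi(\obj\hu_2)$; and that the class $\{Z\in\obj\cu':\ \pi(\obj\hu_1)[i]\perp Z\ \text{for all}\ i\le 0\}$ is closed under coproducts (the objects $\pi(\obj\hu_1)[i]$ being compact, by Lemma \ref{lwcg}(2)), under extensions, and under the shift $[1]$, and contains $\pi(\obj\hu_2)$ by the orthogonality recorded above. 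Hence it contains that pre-aisle, so $\pi(\obj\hu_1)\perp N[l]$ for all $l\ge 0$; as $w_1'$ is generated by $\pi(\obj\hu_1)$, this is exactly the condition $N\in(\cu_1')_{w_1'\ge 1}=\obj\cu_1'\cap\cu'_{w'\ge 1}$, closing the induction. It follows that $N\in\bigcap_{k\in\z}\cu'_{w'\ge k}=\ns$ by left non-degeneracy of $w'$. Applying this to $\pi(M)$ for a $w$-bounded below $M\in\obj\cu_1\cap\obj\cu_2$ — note that $\pi(M)\in\obj\cu_1'\cap\obj\cu_2'$ is $w'$-bounded below by weight-exactness of $\pi$ — gives $\pi(M)=0$, i.e.\ $M\in\ke\pi=\obj\cu_3$; this is the "moreover" assertion.

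For the right-degeneracy statement I would take $M\in\obj\cu_1\cap\obj\cu_2$, so that $\pi(M)\in\obj\cu_1'\cap\obj\cu_2'$, and fix $n\in\z$. Using that $w_1'$ and $w_2'$ are restrictions of $w'$, I would decompose $\pi(M)$ both inside $\cu_1'$ and inside $\cu_2'$, obtaining $w'$-weight decompositions $X\to\pi(M)\to Y\to X[1]$ with $X\in\obj\cu_1'\cap\cu'_{w'\le n}$ and $Y\in\obj\cu_1'\cap\cu'_{w'\ge n+1}$, and $X'\to\pi(M)\to Y'\to X'[1]$ with $X'\in\obj\cu_2'\cap\cu'_{w'\le n}$. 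By the uniqueness of weight decompositions up to (non-canonical) isomorphism (a standard property of weight structures, \cite{bws}) one has $X\cong X'$; strictness of $\cu_2'$ then puts $X$, and hence its cone $Y$, in $\obj\cu_1'\cap\obj\cu_2'$. Now $Y$ is $w'$-bounded below, so $Y=0$ by the previous paragraph, whence $\pi(M)\cong X\in\cu'_{w'\le n}$. As $n$ was arbitrary, $\pi(M)\in\bigcap_{n\in\z}\cu'_{w'\le n}$, i.e.\ $\pi(M)$ is right $w'$-degenerate.

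The step I expect to be the main obstacle is the inductive heart of the "moreover" part: one must choose the orthogonality class so that it is stable under $[1]$ and yet still absorbs the entire pre-aisle generated by $\pi(\obj\hu_2)$, so that exactly one unit of weight positivity is gained at each step and left non-degeneracy can then finish the argument. A lesser technical point is the matching, in the degeneracy step, of the two weight decompositions of $\pi(M)$ computed separately in $\cu_1'$ and $\cu_2'$, which relies on uniqueness of weight decompositions and on $\obj\cu_1'\cap\obj\cu_2'$ being a strict triangulated subcategory closed under cones.
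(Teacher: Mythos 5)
Your proof is correct, but it takes a genuinely different route from the paper's. The paper's proof is a two-line reduction: it invokes Remark 3.1.6(1) of \cite{bkw} as a black box, which says that both conclusions follow once one checks that $\pi$ kills all $\cu$-morphisms from $\cu_{1,w_1=0}$ into $\cu_{2,w_2=0}$; this vanishing is then immediate from the hypothesis together with the fact that $\pi$ respects coproducts and the description of $\cu_{i,w_i=0}$ in Proposition \ref{pwexb}(1). By contrast, your argument is self-contained within the paper's recalled toolkit: you construct the restricted weight structures $w_i'$ on the subcategories $\cu_i'\subset\cu'$ generated by $\pi(\obj\hu_i)$, deduce from the hypothesis and negativity the orthogonality $\pi(\obj\hu_1)\perp\pi(\obj\hu_2)[j]$ for $j\ge 0$, and then run an induction (using the pre-aisle description of $\cu'_{w'\ge 0}$ and the compactness of $\pi(\obj\hu_1)$) showing that any $w'$-bounded below object in $\obj\cu_1'\cap\obj\cu_2'$ lies in $\cu'_{w'\ge k}$ for all $k$, hence is zero by left non-degeneracy; the right-degeneracy statement then follows by matching weight decompositions of $\pi(M)$ taken separately in $\cu_1'$ and $\cu_2'$ via the standard uniqueness-up-to-isomorphism of weight decompositions. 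The paper's route is shorter but depends on an external reference, while yours is longer but transparent and avoids \cite{bkw} entirely; it is also different from the alternative argument in Remark \ref{rintersk}(2), which relies on weight complexes and Theorem 3.3.1(V) of \cite{bws} (and only covers the $w$-bounded case). One small remark: the uniqueness of weight decompositions up to non-canonical isomorphism that you invoke in the last step is indeed standard, but it is not explicitly recalled in the paper's preliminaries, so it would be worth spelling out the short orthogonality argument (that $\cu(X_1,X_2)\cong\cu(X_1,M)$ produces mutually inverse comparison morphisms) or supplying the precise reference in \cite{bws}.
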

\begin{proof} 
Denote the localization functor $\cu\to \cu'$ by $\pi$, and
denote the 
weight structures on $\cu_1$ and $\cu_2$ 
 generated by $\obj \hu_1$ and $\obj \hu_2$, respectively (see  Proposition \ref{pwexb}(1)) by 
$w_1$ and $w_2$. 
According to Remark 3.1.6(1) of \cite{bkw},  
 to prove our assertions it suffices to verify that 
the functor $\pi$ kills all $\cu$-morphisms from
$\cu_{1,w_1=0}$ into $\cu_{2,w_2=0}$. Since 
$\pi$ respects coproducts 
 (see Lemma \ref{lwcg}(2)), 
it remains to recall that $\pi$ kills all $\cu$-morphisms from $\hu_{1}$ into $ \hu_{2}$.
\end{proof}

\begin{rema}\label{rintersw}
1. It certainly suffices to assume that   any morphism from an object of $\hu_1$ into $\hu_2$ factors through some object of $\cu_3$ (instead of 
being killed by $\pi$). Actually, these two vanishing conditions are equivalent according to Proposition \ref{ploc}(I.2). 

2. Instead of checking 
these vanishing conditions it certainly suffices to verify that $\pi$ kills all morphisms from the corresponding  $\cu_{1,w^c\le 0}$ into $\cu_{2, w^c\ge 0}$
(see Proposition \ref{pwexbb}). Moreover, Proposition \ref{ploc}(I.3) yields that both of these conditions are equivalent to $\pi(\cu_{1,w^c\le 0}) \perp \pi(\cu_{2, w^c\ge 0})$.
\end{rema}

\section{Intersecting motivic filtrations and a conjecture of Ayoub}\label{sayoconj} 

In this section we intersect  the levels of the slice filtration on motives over a perfect field $k$ with that of the dimension filtration.

In \S\ref{sintersk} we study the intersection of the levels of slice filtration with the dimension one. 

In \S\ref{sapplayoub} we relate our results to Conjecture 4.22 of \cite{ayconj} to obtain several assumptions equivalent to it (we actually prove a more general  
 result of this sort).

In \S\ref{sthom} we prove that our results yield a complete calculation of the 
 intersections in question in the subcategory $\dmers\subset \dmer$ of homotopy $t$-structure bounded above motivic complexes (that was considered in \cite{vbook} and in \cite{1}).

\subsection{Intersecting the dimension and the slice filtrations on unbounded motivic complexes}\label{sintersk}

We start with some preliminaries and notation for motivic complexes.

In this section $k$ will denote a fixed perfect base field of characteristic $p$, and we set $\zop=\z$ if $p=0$.

The set of smooth projective varieties over $k$ will be denoted by $\spv$. 
 
\begin{itemize}
\item
For $R$ being a fixed unital commutative  $\zop$-algebra we consider the $R$-linear motivic categories $\dmger\subset\dmer$.\footnote{In some papers on the subject (in particular,  in \cite[\S4]{degmod}) only the case $R=\z$ is considered; one can easily pass to the case $R=\zop$ or $R$ being a localization of $\zop$ (say, $R=\q$)  using the more-or-less standard "localization of coefficients" techniques (cf. Proposition 5.6.2 of \cite{bpgws}). The reader may certainly restrict himself to these  cases  (that are quite interesting and non-trivial for themselves). One  can also reduce our  results for an arbitrary $R$ to that for the case $R=\zop$; yet this requires some  work on the properties of the "forgetful" functor $\dmer\to DM^{eff}_{\zop}$. So we prefer to  treat the case of a general $R$; the most detailed account on $\dmer$ in this setting is (probably) \cite[\S6]{bev} (cf. also \cite{cdint}, \cite{vbook}, and \cite{bokum}).} 
So, $\dmer$ is the category of unbounded $R$-motivic complexes over $k$ (see Proposition 1.3.1 of \cite{bokum}). It is closed with respect to small coproducts (and so,  Karoubian); it is compactly generated by its triangulated subcategory $\dmger$ of effective geometric motives. Moreover, $\dmger$ is densely generated by  the $R$-linear motives $\mgr(\spv)$ (easy from Theorem 2.2.1(1) of \cite{bzp}; cf. the proof of \cite[Theorem 2.1.2]{bokum}); hence the set $\mgr(\spv)$   compactly generates $\dmer$ also. 
\item
 $\mgr(\spv)$ is a negative subcategory of $\dmer$ (according to Corollary 6.7.3 of \cite{bev}; cf. also Theorem 5.23 of \cite{degdoc}), and the  
Karoubian envelope of  
$\mgr(\spv)$ is the category $\chower$ of effective $R$-linear Chow motives that is also negative in $\dmer$. We will use the symbol $\wchow$ to
denote the weight structure on $\dmer$ 
generated by  $\chower$ (see Proposition \ref{pwexb}(1)). 

\item
We also introduce the following notation: $R\lan 1\ra$ will denote 
the $R$-linear Lefschetz object; so, it equals $R(1)[2]$ in the notation of \cite{1}. 
For $i\ge 0$ and $M\in \obj \dmer$ we will write $M\lan i\ra$ for the object $M\otimes_{\dmer}  (R\lan 1\ra)^{\otimes i}$. 

Recall  that the functor $-\lan i\ra=-\otimes_{\dmer} R\lan i\ra$ is a full embedding of $\dmer$ into itself; thus the essential image
$\dmer(i)=\dmer\lan i\ra$ of this functor is a full subcategory of $\dmer$ that is equivalent to $\dmer$.
\item
Note that $R\lan i\ra$ is a retract of $\mgr(\p^i)$ (for any $i\ge 0$); thus $\chower\lan i\ra\subset \chower$.

\item
Now we define two filtrations for $\dmer$. The so-called slice (or the effectivity) filtration on $\dmer$ is given by  $\dmer(i)$ 
for $i\ge 0$. 
\item
For $m\in \z$ we will write $d_{\le m} \dmer$ for the localizing subcategory of $\dmer$ generated by $\{\mgr(X)\}$ for $X$ running through smooth $k$-varieties of dimension at most $m$ (so, this category is zero for $m<0$). We note that   $d_{\le m} \dmer$ is compactly generated by  $\{\mgr(P)\}$ for $P$ running through smooth projective $k$-varieties of dimension $\le m$ (see Remark 2.2.3 of \cite{bscwh}). 
\end{itemize}

Now we are able to prove the first motivic 
result of this paper. 

\begin{theo}\label{tintersk}
For any $i,m\ge 0$ any element of  $\obj  \dmer(i) \cap \obj d_{\le m} \dmer$ becomes right weight-degenerate (with respect to the weight structure provided by Proposition \ref{ploc})  in the localization $\dmer/d_{\le m-i} \dmer(i)$. 

Moreover, any $\wchow$-bounded below element of   $\obj  \dmer\lan i\ra \cap \obj d_{\le m} \dmer$ 
 belongs to $\obj d_{\le m-i} \dmer\lan i\ra$.
\end{theo}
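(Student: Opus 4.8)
The plan is to derive the theorem from Proposition \ref{pintersw}. Take $\cu=\dmer$ and $\hu=\chower$: as recalled above $\dmer$ is compactly generated by the negative additive subcategory $\chower$, so Proposition \ref{pwexb}(1) applies and $\wchow$ is the weight structure generated by $\obj\chower$. Set $\hu_1=\chower\lan i\ra$ (a full additive subcategory of $\chower$, since $\chower\lan i\ra\subset\chower$), let $\hu_2$ be the full additive subcategory of $\chower$ on the objects $\mgr(P)$ with $P\in\spv$, $\dim P\le m$, and let $\hu_3$ be the full additive subcategory on the objects $\mgr(P)\lan i\ra$ with $P\in\spv$, $\dim P\le m-i$. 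Since $-\lan i\ra$ is exact and respects coproducts, and by Remark 2.2.3 of \cite{bscwh}, the localizing subcategories generated by $\hu_1,\hu_2,\hu_3$ are, respectively, $\dmer(i)$, $d_{\le m}\dmer$ and $d_{\le m-i}\dmer(i)$. Hence, \emph{once we know that every $\dmer$-morphism from an object of $\hu_1$ into an object of $\hu_2$ vanishes in $\cu'=\dmer/d_{\le m-i}\dmer(i)$}, Proposition \ref{pintersw} delivers at once both assertions of the theorem: all elements of $\obj\dmer(i)\cap\obj d_{\le m}\dmer$ become right $w'$-degenerate in $\cu'$, and the $\wchow$-bounded below ones lie in $\obj d_{\le m-i}\dmer(i)$, where $w'$ is the weight structure provided by Proposition \ref{ploc} (as in the statement).

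\emph{Reduction of the key vanishing.} By additivity, Karoubi-closedness, and the fact that $\chower$ is the Karoubi envelope of $\mgr(\spv)$, it suffices to annihilate in $\cu'$ an arbitrary morphism $f\colon\mgr(Q)\lan i\ra\to\mgr(P)$ with $Q,P\in\spv$; shrinking $m$ to $\dim P$ (which only shrinks $d_{\le m-i}\dmer(i)$) we may take $\dim P=m$, and the case $m<i$ is vacuous. By the standard description of morphisms in a Verdier quotient (cf.\ Remark \ref{rintersw}(1)), $f$ dies in $\cu'$ if and only if it factors through an object of $d_{\le m-i}\dmer(i)$. Recalling that $\mgr(P)$ is dualizable in $\dmgmr$ with dual $\mgr(P)\lan-m\ra$, we identify $\dmer(\mgr(Q)\lan i\ra,\mgr(P))=\dmgmr(\mgr(Q)\lan i\ra,\mgr(P))$ with the Chow group $CH^{m-i}(Q\times P)_R$; thus, after multiplying by a unit of $R$ if needed (harmless since $R$ is a $\zop$-algebra and one may use Gabber's alterations with prime-to-$p$ degrees), $f$ is the correspondence of a codimension-$(m-i)$ cycle $Z$ on $Q\times P$, whose support has dimension $\dim Q+i$.

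\emph{The core step and the main obstacle.} It remains to factor the correspondence of such a $Z$ through $d_{\le m-i}\dmer(i)$. I would first replace $Z$ by a smooth projective $\tilde Z$ of dimension $\dim Q+i$ (an alteration, or a resolution when $\cha k=0$) carrying a proper surjection $h\colon\tilde Z\to P$ and a morphism $g\colon\tilde Z\to Q$, so that $f$ becomes an $R$-multiple of $\mgr(Q)\lan i\ra\xrightarrow{g^{*}}\mgr(\tilde Z)\xrightarrow{\mgr(h)}\mgr(P)$, the first arrow being the motivic Gysin morphism of $g$ (legitimate since $\dim\tilde Z-\dim Q=i$). The point is then to factor $\mgr(h)\circ g^{*}$ through the Gysin morphism $\mgr(P')\lan i\ra\to\mgr(P)$ of a smooth projective $P'$ of dimension $m-i$ sitting over $P$ — for instance a generic $(m-i)$-dimensional linear section of $P$, produced by Bertini over a large enough extension and descended by a norm argument, or by a further alteration: intersecting $\tilde Z$ with the preimage of $P'$ should produce the required lift $\mgr(Q)\lan i\ra\to\mgr(P')\lan i\ra$, with commutativity coming from base-change and excess-intersection formulae for motivic Gysin morphisms. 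Since $\mgr(P')\lan i\ra\in\obj d_{\le m-i}\dmer(i)$, this finishes the proof. The real difficulty is exactly this last factorization: choosing the section transverse to $h$ and making the square commute \emph{on the nose}, rather than only modulo cycles supported in dimension $<m$, together with the coefficient- and degree-bookkeeping. A likely cleaner alternative, in the spirit of \S\ref{srelmot}, is to avoid explicit cycles and instead compute the $\cu'$-Hom group via the coniveau (Gersten-type) spectral sequence for $\mgr(P)$, whose layers are $\mathbb{G}_m$-twisted motivic cohomology groups of the residue fields of the points of $P$ graded by dimension; one then checks that these vanish in the range relevant after localizing by $d_{\le m-i}\dmer(i)$, thereby reducing the whole statement to a vanishing of motivic cohomology over fields.
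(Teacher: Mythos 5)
Your reduction to Proposition \ref{pintersw} is essentially the paper's own argument, and your choice of $\hu$, $\hu_1$, $\hu_2$, $\hu_3$ matches the paper up to a swap of the roles of $\hu_1$ and $\hu_2$: the paper takes $\hu_1$ to be the motives of smooth projectives of dimension $\le m$ and $\hu_2=\chower\lan i\ra$, so the vanishing to be checked concerns morphisms $\mgr(P)\to\mgr(Q)\lan i\ra$ rather than your $\mgr(Q)\lan i\ra\to\mgr(P)$. Since both Hom groups identify with $CH^{\dim P - i}(P\times Q)_R$ by Poincar\'e duality for Chow motives, the swap is likely immaterial, but it is worth noting that the hypothesis of Proposition \ref{pintersw} is asymmetric (it concerns morphisms from $\hu_1$ into $\hu_2$), so the two choices are not formally interchangeable without checking that the dual vanishing holds.

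The substantive difference is in the treatment of the key factorization. The paper handles it in one line by citing Proposition 2.2.6(2) of \cite{bscwh} (which is exactly the statement that every morphism from $\hu_1$ into $\hu_2$ factors through $\hu_3$). You instead try to reconstruct that argument from scratch, and you correctly identify where it gets hard. Your reduction to a cycle $Z\subset Q\times P$ of codimension $m-i$, the alteration $\tilde Z\to Z$, and the rewriting of $f$ as $\mgr(h)\circ g^*$ are fine in spirit (modulo bookkeeping about what alteration degrees are actually invertible in a $\zop$-algebra — Gabber/de Jong give degrees prime to a chosen $\ell\neq p$, not "prime-to-$p$", so an extra step is needed to see the degree is invertible in $R$). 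But the final step — factoring $\mgr(h)\circ g^*$ through the Gysin morphism of a generic $(m-i)$-dimensional linear section $P'\subset P$ — is not carried through, and as you yourself flag, making the relevant square commute on the nose (rather than modulo lower-dimensional cycles) and descending the generic section to $k$ is exactly the content of the cited result. As written, this is a genuine gap: the argument is a plan, not a proof.

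Your suggested alternative via a coniveau/Gersten spectral sequence is, in fact, close to what the paper does in the relative setting of \S\ref{srelmot} (Theorem \ref{tapplcomp}(II.3) proves exactly such a factorization by reducing to vanishing of twisted "motivic cohomology" groups of residue fields). If you want a self-contained proof along those lines, you should either follow the proof of Theorem \ref{tapplcomp}(II.3) or simply cite Proposition 2.2.6(2) of \cite{bscwh} as the paper does.
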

\begin{proof}
The proof  is an easy application of 
 Proposition  \ref{pintersw}. We take $\cu=\dmer$, $\hu=\chower$, $\hu_1\subset \hu$ being the category
of 
motives of smooth projective varieties of dimension at most $m$, $\hu_2=\chower\lan i\ra$; $\hu_3$ is the category of motives of smooth projective varieties of dimension at most $m-i$ twisted by $\lan i\ra$ (note that $\hu$ contains $\hu_i$ for $i=1,2,3$). 

By the virtue of the aforementioned proposition (cf. also Remark \ref{rintersw}), 
 it suffices to verify that any morphism from $\hu_1$ into $\hu_2$ factors through $\hu_3$. The latter fact is precisely Proposition 2.2.6(2) of \cite{bscwh}.\footnote{Moreover, in the case $\cha k=0$ the proof of \cite[Lemma 3]{gorgul} generalizes to give this statement 
immediately.}  

\end{proof}

\begin{rema}\label{rintersk}

1. Since $R\lan i\ra$ is a retract of $\mgr(\p^i)$,  
  we obviously  have $\hu_3\subset \hu_2$ and $\hu_3\subset \hu_1$.
Hence our 
 theorem describes completely the class of 
$\wchow$-bounded below elements of   $\obj  \dmer(i) \cap \obj d_{\le m} \dmer$.

2. Recall that any compact object of $\dmer$ is $\wchow$-bounded; hence one can apply the "moreover" part of our proposition to the calculation of $\obj  \dmger(i) \cap \obj d_{\le m} \dmger$. 

 The latter calculation has found important applications in \cite{bscwh}. 
For this reason we explain how to avoid using  (Remark 3.1.6(1) of) \cite{bkw} in its proof (however, our argument is rather similar to that in loc. cit.). 
So, we verify under the assumptions of Proposition \ref{pintersw} that any $w$-bounded object of  $\obj \cu_1\cap \obj \cu_2$  belongs to $\obj \cu_3$. 

This argument relies on the theory of weight complexes as introduced in \S3 of \cite{bws} (whereas in \S2.2 of \cite{bpgws} some parts of the theory were exposed more carefully).  We recall that to any object $M$ of $\cu'$ (for $\cu'=\cu/\cu_3$) there is associated its weight complex $t(M)\in \obj K(\hw')$; $t(M)$ is well-defined up to homotopy equivalence. 
The definition of $t(-)$ easily implies that for $i=1$ or $2$ and $M\in \pi(\obj \cu_i)$ the complex $t(M)$ is homotopy equivalent to a complex whose terms belong  
to $\pi(\cu_i{}_{w_i=0})$. 
Since $\pi(\cu_1{}_{w_1=0})\perp \pi(\cu_2{}_{w_2=0})$ (see Remark \ref{rintersw}), 
 for any $N\in \obj \cu_1\cap \obj \cu_2$ the morphism $\id_{t(\pi(N))}$ is zero in  $K(\hw')$.
Applying  Theorem 3.3.1(V) of \cite{bws} we conclude that $\pi(N)=0$.

3. Note that in 
our theorem one cannot replace $\dmer\subset \dmr$ by the corresponding version of the motivic stable homotopy category $\sht(k)$ (say, for $R$ equal to $ \z$ or to $\z[\sss\ob]$ for $\sss$ being a set of primes; then one can define the $R$-linear version of $\sht(k)$ as a certain localization).  
   One of the reasons for this is that there is no Chow weight structure on $\sht^c(k)\subset \sht(k)$ (and on $\sht^c(k)[\sss\ob]$ if $2\notin \sss$; see Remark 3.1.2 of \cite{binf} and Remark 6.3.1(3) of \cite{bgern}). Moreover,  even the "compact version" of  Theorem  \ref{tintersk} does not carry over to the $\sht(k)$-setting.
There probably exist plenty of examples illustrating the latter statement. Here we will only note that  for $k$ being any formally real field the corresponding category $d_{\le 1}\she(k)$
contains a non-zero compact  infinitely effective object (i.e., an element of  $\cap_{i\ge 0}\obj\she(k)(i)$). Indeed, for the object $C$ constructed in Remark 2.1.2(3) of \cite{binf}  we surely have $C(1)\in \obj d_{\le 1}\she(k)$ and $C\neq 0$ in $\she(k)[\sss\ob]$ unless $3\in \sss$ (and $3$ may be replaced by any other odd prime here). Yet  the associated motif $M_{k}(C)$ of $C$ is zero by loc. cit.; hence $C$ is infinitely effective in $\she(k)$ by Theorem 3.1.1 of ibid.

\end{rema}

\subsection{An application 
 to a conjecture of J. Ayoub}\label{sapplayoub}

We recall some basics on "slice" functors.

For any $i\ge 0$ the right adjoint   to the 
functor $-\lan i \ra:\dmer\to \dmer$ can certainly be described as $\ihom_{\dmer}(R\lan i\ra,-) $;
 this functor respects small coproducts.
Next, the composition $\nu^{\ge i}=\lan i \ra \circ \ihom_{\dmer}(R\lan i\ra,-)  :
\dmer\to \dmer$ equals the composition of the embedding $\dmer \lan i \ra\to \dmer$ with the right adjoint to it.

Now we establish some new properties of the slice functors.

\begin{pr}\label{rayouconjred}
Fix $i,m\ge 0$ (along with $R$).
Then the following statements are valid.

I. $\nu^{\ge i}$ is right 
$\wchow$-exact. 

II. The following conditions are equivalent.

1. $\nu^{\ge i}$ sends $d_{\le m}\dmer$ into itself.

2. $\ihom_{\dmer}(R\lan i\ra,-) $ sends $d_{\le m}\dmer$ into $d_{\le m-i}\dmer$. 

3. $\obj \dmer\lan i\ra$ becomes orthogonal to 
$\obj d_{\le m}\dmer$   in the localization $\dmer/d_{\le m-i}\dmer\lan i \ra$. 

4. For any smooth projective $P,Q/k$ and $n\in \z$ with $\dim Q\le m$ the image of $\mgr(P)\lan i \ra$ in   $\dmer/d_{\le m-i}\dmer\lan i \ra$ is orthogonal to (the image of)  $\mgr(Q)[n]$.

5. For any $P,Q,n$ as above  the image of $\mgr(P)\lan i \ra$ in   $\dmger/d_{\le m-i}\dmger\lan i \ra$ is orthogonal to (the image of)  $\mgr(Q)[n]$.

\end{pr}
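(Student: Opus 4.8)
The plan is to prove Part I first and then establish the cycle of equivalences in Part II by a mix of direct adjunction arguments and appeals to Theorem \ref{tintersk}. For Part I, I would argue that $\nu^{\ge i}$ is right $\wchow$-exact by combining the description of $\nu^{\ge i}$ as the composition of the embedding $\dmer\lan i\ra\to\dmer$ with its right adjoint, together with Proposition \ref{pbw}(\ref{iwadj}). Concretely: the embedding $-\lan i\ra:\dmer\to\dmer$ is left weight-exact with respect to $\wchow$ on both sides — this follows from Proposition \ref{pbw}(\ref{iwadj}) applied to the fact that $\wchow$ is generated by $\chower$ and that $\chower\lan i\ra\subset\chower$ (noted in the itemized preliminaries, since $R\lan i\ra$ is a retract of $\mgr(\p^i)$). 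Hence its right adjoint $\ihom_{\dmer}(R\lan i\ra,-)$ is right weight-exact; composing again with the (weight-exact) functor $-\lan i\ra$ gives that $\nu^{\ge i}=\lan i\ra\circ\ihom_{\dmer}(R\lan i\ra,-)$ sends $\cu_{\wchow\ge 0}$ into itself, which is the claim.

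For Part II, I would run the implications in the order $1\Rightarrow 2\Rightarrow 3\Rightarrow 4\Rightarrow 5\Rightarrow 1$ (or the most convenient rearrangement). The step $2\Rightarrow 1$ (and its converse) is essentially bookkeeping with the twist functor: since $-\lan i\ra$ is a full embedding and $d_{\le m-i}\dmer\lan i\ra\subset d_{\le m}\dmer$, one has $\nu^{\ge i}(M)=\ihom_{\dmer}(R\lan i\ra,M)\lan i\ra$, so $\nu^{\ge i}$ landing in $d_{\le m}\dmer$ is equivalent (using that $-\lan i\ra$ reflects the subcategory membership, as $\dmer\lan i\ra$ is equivalent to $\dmer$ and the twist of a dimension-bounded generator is dimension-bounded) to $\ihom_{\dmer}(R\lan i\ra,-)$ landing in $d_{\le m-i}\dmer$. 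The heart of the argument is the equivalence with condition 3: by Lemma \ref{ladj}(3), applied to the adjunction between $\dmer\lan i\ra\hookrightarrow\dmer$ and $\nu^{\ge i}$ (viewed through the localization by $d_{\le m-i}\dmer\lan i\ra$), the vanishing of $\pi(\nu^{\ge i}(M))$ for all $M\in d_{\le m}\dmer$ is equivalent to $\pi(\obj\dmer\lan i\ra)\perp\pi(M)$ — one must check that $\nu^{\ge i}$ sends $d_{\le m-i}\dmer\lan i\ra$ into itself so the localized functor is well-defined, which is immediate from Part I-type reasoning (it sends $\dmer\lan i\ra$ into itself, being the composite with the embedding's right adjoint). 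Then $1\Leftrightarrow 3$ follows since condition 1 is precisely $\pi(\nu^{\ge i}(M))=0$ for $M$ running over generators of $d_{\le m}\dmer$, using that $\nu^{\ge i}$ respects coproducts and $d_{\le m}\dmer$ is compactly generated by motives of smooth projective varieties of dimension $\le m$.

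The implications $3\Rightarrow 4$ and $4\Leftrightarrow 5$ are reductions to compact generators: condition 3 is an orthogonality between localizing subcategories, and since $\dmer\lan i\ra$ is compactly generated by $\chower\lan i\ra$ (equivalently by the $\mgr(P)\lan i\ra$) and $d_{\le m}\dmer$ by the $\mgr(Q)[n]$ with $\dim Q\le m$, the general orthogonality follows from orthogonality on generators by a standard coproduct/localizing-subcategory argument (the localization functor $\pi$ respects coproducts by Lemma \ref{lwcg}(2)); the equivalence $4\Leftrightarrow 5$ is then Lemma \ref{lwcg}(3), which says $\pi$ restricts to a full embedding of the compact parts, so hom-sets between compact objects agree in $\dmger/d_{\le m-i}\dmger\lan i\ra$ and in $\dmer/d_{\le m-i}\dmer\lan i\ra$. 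Finally $5\Rightarrow 1$ (or equivalently $4\Rightarrow 2$): here is where Theorem \ref{tintersk} enters — combined with Remark \ref{rintersk}, the vanishing of $\pi(\nu^{\ge i}(M))$ for compact $M$ is equivalent to the orthogonality on generators via Lemma \ref{ladj}(2,3), closing the loop. The main obstacle I anticipate is the careful verification that all the relevant functors ($-\lan i\ra$, $\ihom_{\dmer}(R\lan i\ra,-)$, $\nu^{\ge i}$) descend to the Verdier quotients by $d_{\le m-i}\dmer\lan i\ra$ in a way compatible with the adjunctions — i.e., checking the hypotheses of Lemma \ref{ladj}(1,3) hold (that the subcategories being quotiented out are sent into each other appropriately) — since the equivalences $1\Leftrightarrow 2\Leftrightarrow 3$ all hinge on this being clean.
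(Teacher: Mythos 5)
Your Part~I matches the paper's argument exactly. In Part~II, your treatment of $3\Rightarrow 4$, $4\Leftrightarrow 5$, and the equivalence of condition~3 with ``$\nu^{\ge i}(d_{\le m}\dmer)\subset d_{\le m-i}\dmer\lan i\ra$'' via Lemma~\ref{ladj}(3) is also correct and matches the paper. However, there is a genuine gap in your handling of $1\Rightarrow 2$, and correspondingly a misplacement of where Theorem~\ref{tintersk} is actually needed.

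You assert that $1\Leftrightarrow 2$ is ``bookkeeping with the twist functor,'' invoking that $-\lan i\ra$ reflects subcategory membership. But that only shows $\ihom_{\dmer}(R\lan i\ra,M)\in d_{\le m-i}\dmer$ iff $\nu^{\ge i}(M)\in d_{\le m-i}\dmer\lan i\ra$. Condition~1 gives something strictly weaker, namely $\nu^{\ge i}(M)\in d_{\le m}\dmer$. Combined with $\nu^{\ge i}(M)\in \obj\dmer\lan i\ra$, this places $\nu^{\ge i}(M)$ in $\obj d_{\le m}\dmer\cap\obj\dmer\lan i\ra$, and to descend from there to $\obj d_{\le m-i}\dmer\lan i\ra$ you need precisely the intersection computation --- i.e.\ the ``moreover'' part of Theorem~\ref{tintersk} --- which in turn requires $\wchow$-boundedness below. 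This is exactly why Part~I is stated: it gives $\nu^{\ge i}(\mgr(P))\in\dmer_{\wchow\ge 0}$ for the compact generators $\mgr(P)$ of $d_{\le m}\dmer$, which is the hypothesis under which Theorem~\ref{tintersk} applies. Without this step you have no way to get from condition~1 into $d_{\le m-i}\dmer\lan i\ra$ (recall that right $\wchow$-degenerate objects of $\dmer\lan i\ra$ do exist, cf.\ Remark~\ref{rdeg}(1), so boundedness is not automatic). Conversely, your appeal to Theorem~\ref{tintersk} in the step $5\Rightarrow 1$ is spurious: $5\Rightarrow 4\Rightarrow 3\Rightarrow 2\Rightarrow 1$ is a chain of the ``easy'' implications (Lemma~\ref{lwcg}, then Lemma~\ref{ladj}(3), then the straightforward direction of the twist bookkeeping), none of which needs the intersection theorem. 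In short, the one non-formal step is $1\Rightarrow 2$, and that is where the intersection result and Part~I must enter; your proposal declares it formal and relocates the heavy input to a direction that is actually free.
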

\begin{proof}
I. 
Recall that $\chower\lan i\ra \subset \chower$ (see \S\ref{sintersk}); hence the twist functor $-\lan i \ra:\dmer\to \dmer$ is weight-exact according to Proposition \ref{pwexb}(2).
Applying Proposition \ref{pbw}(\ref{iwadj}) to the adjunction  $-\lan i \ra \dashv \ihom_{\dmer}(R\lan i\ra,-) $ we obtain that $\ihom_{\dmer}(R\lan i\ra,-) $ is  right 
$\wchow$-exact. It remains to note that the composition of right weight-exact functors is right  weight-exact also.


II. Condition II.2 implies condition II.1  due to the weight-exactness of $-\lan i \ra:\dmer\to \dmer$; cf. the proof of assertion I. 

Next, recall that $d_{\le m}\dmer$  is  generated by  $\{\mgr(P)\}$ for $P$ running through smooth projective $k$-varieties of dimension $\le m$, as a localizing subcategory of $\dmer$. Hence to verify the converse implication it suffices to check whether condition II.1 implies 
 that  $\ihom_{\dmer}(R\lan i\ra,\mgr(P))\in d_{\le m-i}\dmer$ if $P$ is smooth projective of dimension at most $m$. Now, $\nu^{\ge i}(\mgr(P))\in \dmer{}_{\wchow\ge 0}$ according to assertion I. It remains to apply (the "moreover" part of) Theorem  \ref{tintersk}.

So, the first two conditions are equivalent to the assumption that  \break $\nu^{\ge i}(d_{\le m}\dmer)\subset d_{\le m-i}\dmer\lan i \ra$. 
The latter assertion is equivalent to condition II.3 by Lemma \ref{ladj}(3). 

Next, Lemma \ref{lwcg}(2) allows us  to verify the orthogonality  in condition II.3 only for the images  $\pi(\mgr(P))$ and $\pi(\mgr(Q)\lan i \ra[n])$ for $P,Q$, and $n$ as in condition II.4. Hence condition II.3 is equivalent to   II.4 by Lemma \ref{lwcg}(3). This (part of the) lemma also implies that condition II.4 is equivalent to condition II.5.

\end{proof}

\begin{rema}\label{rcayoub}
Note that in the case where $R$ is a $\q$-algebra and $i=1$ our condition II.2 is exactly Conjecture 4.22 of \cite{ayconj}. Certainly (for any fixed $R$) if 
we consider this condition for all $m\ge 0$ then the case $i=1$ of it implies all the other cases. Recall also that certain cases of our condition II.2 
  were verified in Proposition 4.25 of ibid.
\end{rema}

\subsection{Computing intersections inside Voevodsky's $\dmers$}\label{sthom}

Now we extend the "moreover" part Theorem \ref{tintersk} to a wider class of objects. We start from a few remarks.

\begin{rema}\label{rdeg}
1. The problem with our arguments is that weight structures 
 do not say much on (right) weight-degenerate objects. Note here that non-zero right 
 $\wchow$-degenerate objects in $\dmer$ do exist (at least) whenever $k$ is a big enough field and $R$ is not a torsion ring (see Remark 2.3.5(3) of \cite{bkw} that relies on Lemma 2.4 of \cite{ayconj}).

We also note that  this Ayoub's motif 
belongs to $\dmer{}^{\thom\le 0}$ (see below), is {\it infinitely effective} (i.e., belongs to $\obj \dmer(r)$ for all $r\ge 0$), and its {\it Betti realization} vanishes (as proved in loc. cit.). 

The author suspects that all $\wchow$-degenerate objects of $\dmer$ are infinitely effective.  

2. Starting from the first motivic papers of Voevodsky one of the main tools of working with motivic complexes was the so-called homotopy $t$-structure  $\thom$. Actually, instead of the unbounded category $\dmer$ he essentially considered (see \S14 of \cite{vbook}; the case $R=\z$ was treated in \cite{1}) its $\thom$-bounded above  subcategory $\dmers$ whose objects are $\cup_{i\in \z}\dmer{}^{\thom\le i}$ (so, we use the cohomological convention for $t$-structures here; cf. \cite[\S4.1]{bws} or \cite[\S1.4]{bkw} for more detail on it). Thus the 
intersection result that we will prove
 below is completely satisfactory from this older point of view.

Now we recall a description of $\thom$ that will be convenient for our purposes. According to Theorem 2.4.3 and Example 2.3.5(1) of \cite{bondegl} (where the assumptions on $R$ are the same as in this paper, but the convention for $t$-structures is the homological one), $\dmer{}^{\thom \le 0}$ is the pre-aisle generated by $\cup_{i\ge 0}\obj \chower\lan i \ra[-i]$. Certainly, $\dmer{}^{\thom \ge 0}$  can be recovered from  $\dmer{}^{\thom \le 0}$ using the orthogonality condition (still we will not use this fact below).
\end{rema}

We will need the following very useful lemma.

 \begin{lem}\label{layoub}
For any $m\ge 0$ we have $\obj \dmer (m+1)\perp \obj d_{\le m}\dmer$.

\end{lem}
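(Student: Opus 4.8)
The claim is that $\obj\dmer(m+1)\perp\obj d_{\le m}\dmer$, i.e., that $\dmer(N\lan m+1\ra, M)=0$ whenever $N\in\obj\dmer$ and $M\in\obj d_{\le m}\dmer$. Since $d_{\le m}\dmer$ is generated as a localizing subcategory by $\{\mgr(X)\}$ for $X$ smooth of dimension $\le m$, and since the functor $\dmer(N\lan m+1\ra,-)$ is homological and (after dualizing the compactness of $N\lan m+1\ra$ — but $N$ need not be compact, so instead) we argue on the other side: the class $\{M: \dmer(N\lan m+1\ra,M)=0\ \forall N\}$ is a localizing subcategory of $\dmer$ (it is closed under coproducts, shifts and extensions as the common kernel of a family of homological functors commuting with coproducts — here we use that $-\lan m+1\ra$ respects coproducts, hence so does its left-side behaviour; more precisely $\dmer(N\lan m+1\ra,\coprod M_j)$ need not split, so instead we use the \emph{right} adjoint). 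The clean route is: $\dmer(N\lan m+1\ra, M)\cong\dmer(N,\ihom_{\dmer}(R\lan m+1\ra,M)\lan -?\ra)$ — rather, by the tensor--hom adjunction $\dmer(N\lan m+1\ra,M)\cong\dmer(N,\ihom_{\dmer}(R\lan m+1\ra,M))$. So it suffices to show $\ihom_{\dmer}(R\lan m+1\ra,M)=0$ for all $M\in\obj d_{\le m}\dmer$. Equivalently, $\nu^{\ge m+1}(M)=\ihom_{\dmer}(R\lan m+1\ra,M)\lan m+1\ra=0$, i.e., $M$ has trivial $(m+1)$st slice tower truncation, i.e. $\pi(\obj\dmer\lan m+1\ra)\perp\pi(M)$ in $\dmer/\dmer\lan m+1\ra$ is automatic — no, we want vanishing of the functor itself.

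\textbf{Reduction to generators.} The functor $\ihom_{\dmer}(R\lan m+1\ra,-)$ respects coproducts (stated in \S\ref{sapplayoub}: "this functor respects small coproducts"), and it is exact; hence the class $\{M\in\obj\dmer:\ihom_{\dmer}(R\lan m+1\ra,M)=0\}$ is a localizing subcategory of $\dmer$. Therefore it suffices to check that $\ihom_{\dmer}(R\lan m+1\ra,\mgr(X))=0$ for every smooth $X/k$ of dimension $\le m$; and since $d_{\le m}\dmer$ is compactly generated by $\{\mgr(P)\}$ for $P$ smooth \emph{projective} of dimension $\le m$, it even suffices to treat $M=\mgr(P)$ with $P$ smooth projective, $\dim P\le m$. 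For such $P$, by adjunction $\dmer(N\lan m+1\ra,\mgr(P))\cong\dmer(N,\ihom_{\dmer}(R\lan m+1\ra,\mgr(P)))$, and the right-hand side vanishing is equivalent (taking $N$ running over compact generators $\mgr(Y)\lan j\ra[n]$) to $\dmger(\mgr(Y)\lan m+1+j\ra[n],\mgr(P))=0$ for all smooth projective $Y$, all $j\ge 0$, all $n\in\z$.

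\textbf{The computation.} So the heart of the matter is: for $P$ smooth projective of dimension $\le m$ and $Y$ smooth projective, one has $\dmger(\mgr(Y)\lan r\ra[n],\mgr(P))=0$ for all $r\ge m+1$, all $n$. Using the duality $\mgr(P)\cong \mgr(P)^*\lan d_P\ra[2d_P]$ with $d_P=\dim P\le m$ (Poincaré duality for smooth projective motives in $\dmger$, valid with $\zop$-coefficients since $p$ is invertible, e.g. \cite{bzp} or \cite{bev}), we get $\dmger(\mgr(Y)\lan r\ra[n],\mgr(P))\cong\dmger(\mgr(Y)\lan r\ra\otimes\mgr(P)^*,R\lan d_P\ra[2d_P-n])\cong\dmger(\mgr(Y\times P)\lan r-d_P\ra[n-2d_P],R)$ — more carefully, it becomes a motivic cohomology group $H^{s}_{\mathcal M}(\mgr(Y\times P),R(r-d_P))$ for appropriate $s$, with $r-d_P\ge m+1-m=1>0$. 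But motivic cohomology $H^{*}_{\mathcal M}(Z,R(t))$ of a smooth scheme $Z$ vanishes for $t<0$, and here $t=r-d_P\ge 1$, so this is not yet zero — I need the \emph{negative-twist} vanishing applied the other way. Reconsider: the correct adjunction gives $\dmger(\mgr(Y)\lan r\ra[n],\mgr(P))\cong\dmger(\mgr(Y)[n],\ihom(R\lan r\ra,\mgr(P)))$, and $\ihom(R\lan r\ra,\mgr(P))\cong\ihom(R\lan r\ra,R)\otimes\mgr(P)\lan 0\ra$ is not well-defined; instead use $\mgr(P)\in\obj d_{\le m}\dmger\subset\langle\chower\rangle$, reduce to $P=\spe k$ case by the weight/Chow filtration, where $\ihom(R\lan r\ra,R\lan j\ra)=\ihom(R\lan r-j\ra,R)$ with $r-j\ge r-m\ge 1$, and $\ihom_{\dmer}(R\lan s\ra,R)=0$ for $s\ge 1$ because $R$ is effective of slice $0$ — this last fact is \emph{precisely} the statement that the unit object is orthogonal to positive twists, equivalent to the vanishing of motivic cohomology of $\spe k$ in negative weights, which holds.

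\textbf{Main obstacle.} The subtle point, and the step I expect to be the real obstacle, is handling arbitrary $M\in\obj d_{\le m}\dmer$ rather than just $\mgr(P)$: the reduction "the kernel class is localizing, hence it suffices to check generators" is clean for $\ihom_{\dmer}(R\lan m+1\ra,-)$ since that functor is exact and coproduct-preserving, but one must be careful that $d_{\le m}\dmer$ really is \emph{generated by} (not just densely generated by, or containing) the motives of smooth \emph{projective} $P$ of dimension $\le m$ — this is exactly Remark 2.2.3 of \cite{bscwh} as cited in \S\ref{sintersk}. Granting that, everything reduces to the single fact $\ihom_{\dmer}(R\lan m+1\ra,\mgr(P))=0$ for $P$ smooth projective of dimension $\le m$; and this in turn, via Poincaré duality $\mgr(P)\cong\mgr(P)^*\otimes R\lan d_P\ra$ and $d_P\le m$, reduces to $\ihom_{\dmer}(R\lan s\ra,\mgr(P)^*\lan 0\ra)$-type computations with $s=m+1-?$ — ultimately to $\dmger(\mgr(Z)\lan t\ra[*],R)=0$ for $t\ge 1$ and $Z$ smooth projective, i.e., to the vanishing of motivic cohomology $H^*_{\mathcal M}(Z,R(a))$ for $a<0$ (\cite[\S19]{vbook} / \cite{1}; valid for $\zop$- hence $R$-coefficients). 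I would present it as: apply the adjunction $-\lan m+1\ra\dashv\ihom_{\dmer}(R\lan m+1\ra,-)$, reduce to compact generators on the source, invoke Poincaré duality on $\mgr(P)$, and conclude by the negative-twist vanishing of motivic cohomology.
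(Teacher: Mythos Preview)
Your outline is correct, though the exposition wanders considerably before settling on the right argument. The clean version of what you end up with is: by the adjunction $-\lan m+1\ra \dashv \ihom_{\dmer}(R\lan m+1\ra,-)$, the claim is equivalent to $\ihom_{\dmer}(R\lan m+1\ra,M)=0$ for all $M\in\obj d_{\le m}\dmer$; since this internal Hom is exact and respects coproducts, its kernel is a localizing subcategory, so one may take $M=\mgr(P)$ with $P$ smooth projective of dimension $d_P\le m$; testing against compact generators $\mgr(Y)[n]$ and using the strong dualizability $\mgr(P)^\vee\cong\mgr(P)\lan -d_P\ra$ in $\dmgr$ (together with cancellation to pass between $\dmer$ and $\dmgr$) identifies the relevant group with a motivic cohomology group of $Y\times P$ in Tate twist $d_P-(m+1)<0$, which vanishes.

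The paper takes a different route: it cites Ayoub's Proposition~4.25 of \cite{ayconj} for $\q$-algebra coefficients, asserts that those arguments carry over to general $R$ and yield a reduction to the case $m=0$, and then dispatches $m=0$ via the vanishing of higher Chow groups in negative codimension. Your approach bypasses the inductive reduction to $m=0$ by invoking Poincar\'e duality for $P$ directly; this is more self-contained (it does not rely on the unstated details of Ayoub's argument) but lands on exactly the same basic vanishing input. One small cleanup: your duality formula $\mgr(P)\cong\mgr(P)^*\lan d_P\ra[2d_P]$ double-counts the shift, since in this paper $\lan d_P\ra$ already means $(d_P)[2d_P]$; the correct statement is simply $\mgr(P)^\vee\cong\mgr(P)\lan -d_P\ra$.
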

\begin{proof}
In the case where $R$ is a $\q$-algebra this statement is essentially contained in Proposition 4.25 of \cite{ayconj}. 
 Moreover, most of the arguments used in the proof of 
 loc. cit. can be easily carried over to the case of a general ($\zop$-algebra) $R$; this yields a reduction of our assertion to the case $m=0$. In the latter case the assertion is  quite simple; one can easily deduce it from the vanishing of higher Chow groups of negative codimensions for any smooth variety (see Corollary 6.1 of \cite{bev}). 
 
\end{proof}

Now we are able to 
prove the following 
  version of  Theorem \ref{tintersk}. 

\begin{pr}\label{pintersthom}
For any $i,m\ge 0$ 
the class $\obj \dmers\lan i \ra\cap \obj d_{\le m}\dmer $ 
 equals $(\obj \dmers\cap \obj d_{\le m-i}\dmer)\lan i \ra $. 
\end{pr}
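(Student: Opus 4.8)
The inclusion $(\obj\dmers\cap\obj d_{\le m-i}\dmer)\lan i\ra\subseteq \obj\dmers\lan i\ra\cap\obj d_{\le m}\dmer$ is essentially formal: twisting by $\lan i\ra$ sends $\dmers$ into $\dmers$ (the twist functor is $\thom$-exact, or one checks directly that it sends the generators $\chower\lan j\ra[-j]$ of $\dmer^{\thom\le 0}$ into themselves), and $d_{\le m-i}\dmer\lan i\ra\subseteq d_{\le m}\dmer$ since $R\lan i\ra$ is a retract of $\mgr(\p^i)$, so the product of a variety of dimension $\le m-i$ with $\p^i$ has dimension $\le m$. The content is the reverse inclusion, and the plan is to take $M\in\obj\dmers\lan i\ra\cap\obj d_{\le m}\dmer$, write $M=N\lan i\ra$ with $N\in\obj\dmers$, and show $N\in\obj d_{\le m-i}\dmer$.

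**Key steps.** First I would reduce to showing that $N$ lies in $\obj d_{\le m-i}\dmer$ by proving that its image $\pi(N)$ vanishes in the Verdier quotient $\dmer/d_{\le m-i}\dmer$ — equivalently (via the twist being an equivalence) that $\pi'(M)=0$ in $\dmer/d_{\le m-i}\dmer\lan i\ra$, where $\pi'$ is that localization functor. By the first ("degenerate") part of Theorem~\ref{tintersk}, $M$ becomes right $\wchow$-weight-degenerate in $\cu'=\dmer/d_{\le m-i}\dmer\lan i\ra$, i.e.\ $\pi'(M)\in\bigcap_{r\in\z}\cu'_{w'_{Chow}\le r}$. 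So the task is: a right-weight-degenerate object that also comes from $\dmers\lan i\ra$ must be zero in $\cu'$. The mechanism I expect to use is the homotopy $t$-structure together with Lemma~\ref{layoub}: since $N\in\obj\dmers=\cup_j\dmer^{\thom\le j}$, there is $j$ with $N\in\dmer^{\thom\le j}$, hence $M=N\lan i\ra\in\dmer^{\thom\le j}\lan i\ra$. I would combine this effectivity/boundedness constraint with right weight-degeneracy. Concretely, Remark~\ref{rdeg}(2) describes $\dmer^{\thom\le 0}$ as the pre-aisle generated by $\cup_{r\ge 0}\obj\chower\lan r\ra[-r]$; using this one can try to approximate $M$ (up to an arbitrarily negative $\thom$-truncation, or an arbitrarily high weight-truncation) by objects built from $\chower\lan r\ra[-r]$ with $r$ large, and feed the "large twist" into Lemma~\ref{layoub} to kill the relevant $\Hom$'s into/out of $d_{\le m}\dmer$-generators. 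The right-weight-degeneracy of $\pi'(M)$ means every weight truncation $w_{\le s}\pi'(M)\to\pi'(M)$ is the zero map for $s$ small; pairing this with the vanishing supplied by Lemma~\ref{layoub} (applied after the quotient, using that $\pi'$ respects coproducts and that the generators of $d_{\le m-i}\dmer\lan i\ra$ have bounded dimension after untwisting) should force $\pi'(M)=0$.

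**Main obstacle.** The crux — and the genuinely delicate point — is exactly the step flagged in Remark~\ref{rdeg}(1): weight structures "do not say much on right weight-degenerate objects," so one cannot simply conclude $\pi'(M)=0$ from degeneracy alone (nonzero $\wchow$-degenerate objects exist in $\dmer$). The extra leverage must come entirely from the hypothesis $M\in\dmers\lan i\ra$, i.e.\ $\thom$-boundedness above. I expect the hard part to be making the interaction between the slice/homotopy filtration and the Chow weight structure precise enough: one needs that a $\thom$-bounded-above object which is right-$\wchow$-degenerate in the quotient is actually recovered as a homotopy limit (or is a retract of such) of its weight truncations in a way that is controlled, and that each truncation maps to $d_{\le m-i}\dmer\lan i\ra$ after passing far enough down the slice filtration, so that Lemma~\ref{layoub} applies and the limit vanishes. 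Getting the quantifiers right here — which truncation index, which twist index, and that the relevant colimit/limit of vanishing maps really yields $0$ and not just a phantom map — is where the real work lies; everything else is bookkeeping with the already-established Propositions~\ref{ploc}, \ref{pintersw} and Theorem~\ref{tintersk}.
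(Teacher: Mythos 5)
You have assembled the right toolbox (the easy inclusion, Theorem~\ref{tintersk}, Lemma~\ref{layoub}, and the description of $\dmer^{\thom\le 0}$ from Remark~\ref{rdeg}(2)), but you stop exactly at the point where the real argument begins, and the synthesis you gesture at is not how the proof actually works. The missing step is to quotient by $\eu=\dmer\lan m+1\ra$ \emph{first}. Lemma~\ref{layoub} gives $\obj\eu\perp\obj d_{\le m}\dmer$, so by Lemma~\ref{ladj}(2) the localization $\pi\colon\dmer\to\cu'=\dmer/\eu$ restricts to a full embedding on $d_{\le m}\dmer$; hence it suffices to show $\pi(M)\in\pi(\obj d_{\le m-i}\dmer\lan i\ra)$. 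Now comes the key conversion that you were looking for: in $\cu'$ the generators $\chower\lan r\ra[-r]$ of the $\thom\le 0$ pre-aisle with $r>m$ are killed, so $\pi(M)$ lies in the pre-aisle generated by $\cup_{0\le r\le m}\pi(\obj\chower)\lan r\ra[-r]$ --- and since $\chower\lan r\ra\subset\chower$ this is contained in $\cu'_{\wchow\ge -m}$. In other words, after this quotient the $\thom$-bounded-above hypothesis becomes a Chow-weight-bounded-below statement. Passing one step further to $\cu'/\pi(d_{\le m-i}\dmer\lan i\ra)$, the image $M'$ of $\pi(M)$ stays bounded below (Proposition~\ref{ploc}(II)), while Theorem~\ref{tintersk} (pushed through Proposition~\ref{ploc}(II.3)) makes it right $\wchow$-degenerate; these two together give $M'\perp M'$ by the orthogonality axiom, hence $M'=0$, and the full embedding lets you conclude $M\in\obj d_{\le m-i}\dmer\lan i\ra$.

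Two further corrections. Right $\wchow$-degeneracy of $\pi'(M)$ means $\pi'(M)\in\cap_r\cu'_{\wchow\le r}$; it does not say that ``every weight truncation $w_{\le s}\pi'(M)\to\pi'(M)$ is the zero map for $s$ small.'' And no homotopy-limit recovery of $M$ from its weight truncations is needed or used --- the whole punchline is the one-line observation that a bounded-below right-degenerate object is orthogonal to itself. Your instinct that ``the extra leverage must come from $\thom$-boundedness'' is correct, but the mechanism is the quotient by $\dmer\lan m+1\ra$, not an interaction at the level of truncations in $\dmer/d_{\le m-i}\dmer\lan i\ra$ directly.
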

\begin{proof}
Certainly, if $N\lan i \ra\in\obj  \dmers$ for $N\in \obj \dmer$ then $N$ is $\thom$-bounded above also (an easy well-known fact; cf. \cite[Corollary 3.3.7(3)]{bondegl}). 
Thus it suffices to prove that any $M\in \dmer{}^{\thom \le 0}\cap \obj \dmer\lan i \ra \cap \obj d_{\le m}\dmer $ 
 belongs to $\obj d_{\le m-i}\dmer\lan i \ra $. 

Now we apply Lemma \ref{ladj}(2). We take $\cu=\dmer$, $\eu=\dmer\lan m+1\ra$, $\cu'=\cu/\eu$. By Lemma \ref{layoub}, 
 $\obj \eu \perp  \obj d_{\le m}\dmer $. Hence the localization functor $\pi:\cu\to \cu'$  restricts to a full embedding of  $d_{\le m}\dmer $ into $\cu'$.
Since  $ d_{\le m-i}\dmer\lan i \ra\subset  d_{\le m}\dmer $, 
 it suffices to verify that $\pi(M)$ 
 belongs to 
$\pi(\obj d_{\le m-i}\dmer\lan i \ra)$. 

Now, the description of  $ \dmer{}^{\thom \le 0}$ given in Remark \ref{rdeg}(2) yields that $\pi(M)$ belongs to the pre-aisle 
 generated by  $\cup_{0\le i\le m}\pi(\obj \chower)\lan i \ra[-i]$. Thus $\pi(M)$ is bounded below with respect to the 
 weight structure for $\cu'$ generated by $\pi(\obj \chower)$ (see Proposition \ref{ploc}(II)). 
Hence the image  $M'$ of $\pi(M)$ in the localization $\cu'/\pi(d_{\le m-i}\dmer)\lan i \ra)$\footnote{Actually, $M'$ is just $M$ considered as an object of $\cu'/\pi(d_{\le m-i}\dmer \lan i \ra)$.} is also bounded below  with respect to the corresponding weight structure provided by Proposition \ref{ploc}  (here we apply parts II.(2--3) of the proposition). 
 On the other hand, 
$M'$ is  also right 
   $\wchow$-degenerate since the image of $M$ in $\cu/(d_{\le m-i}\dmer\lan i \ra)$ is so by Theorem \ref{tintersk} (here we invoke  
 Proposition \ref{ploc}(II.3) once again). Thus $M'\perp M'$ by the orthogonality axiom of weight structures; hence $M'=0$. Therefore $\pi(M)$ does 
 belong to $\pi(d_{\le m-i}\dmer\lan i \ra)$.

\end{proof}

\begin{rema}\label{rtbound} 1. Formally (the formulation 
of) our proposition "depends on $\dmer$" since it treats  intersections of certain localizing subcategories of $\dmer$ with $\obj\dmers$. 
However, the 
 natural $\dmers$-version of our result (
stated in terms  the corresponding "localizing" classes of objects in $\dmers$; those are closed only with respect to those coproducts that exist in $\dmers$)
 is also true. Indeed, for  any $j\ge 0$  there exists an (exact) right adjoint to the embeddings $d_{\le j}\dmer \to \dmer$ (see Lemma \ref{lwcg}(1)).  Since this functor also respects coproducts (see part 2 of the lemma), it suffices to check that it restricts to an endofunctor of $\dmers$. The latter assertion was mentioned in  the beginning of \cite[\S3.4]{1} (in the case $R=\z$ that does not differ from the general one in this matter); respectively, it  can also be easily established using the methods of the proof of \cite[Corollary 3.7]{bondegl}. 

2. Note that the Chow weight structure on $\dmer$ can be restricted to $\dmers$ (since the arguments of \cite[\S7.1]{bws} carry over to our more general 
setting without any problems). Yet the author was not able to apply this fact for the purposes of the current paper.

3. The main subject of \cite{bokum} is an interesting candidate for the Chow weight structure on $\dmer$ that is defined
independently from the assumption that $p$ is invertible in $R$. It is proven that this weight structure satisfies several important properties of $\wchow$ (in particular, it coincides with $\wchow$ whenever $R$ is a $\zop$-algebra). Yet it is not clear whether this weight structure may be restricted to the subcategories $d_{\le m}\dmer$ (unless it coincides with $\wchow$); so the methods of the current paper cannot be used for the study of $\obj  \dmer \lan i \ra \cap \obj d_{\le m} \dmer$ in this greater generality.     

\end{rema}

\section{A generalization to motives over general base schemes}\label{srelmot}

This section is dedicated to the study of certain versions of the slice and the dimension filtrations for motives over a base. Our methods work for all motivic categories
satisfying a certain list of axioms; we also describe three series of examples satisfying them.

In \S\ref{sdimf} we (essentially) generalize Gabber's notion of a dimension function; this allows to apply the results of this section to motives over any Noetherian separated excellent scheme of finite Krull dimension.

In \S\ref{srmotfilt} we describe some of the axioms on motivic categories that are necessary to define  our filtrations. 

In \S\ref{saddort} we recall (from \cite{bondegl}) some properties of (co)niveau spectral sequences that converge to the cohomology of Borel-Moore objects. They enable us to relate certain 
vanishing properties of the "$\md$-motivic cohomology" of fields to  more general ortogonality statements; we obtain quite interesting vanishing statements this way.

In \S\ref{swchow} we prove that the levels of our filtrations are endowed with certain Chow weight structures. This enables us to "intersect two types of filtrations" and obtain the corresponding analogue of Theorem  \ref{tintersk}. We also mention three (series of) examples of motivic categories satisfying our assumptions.

\subsection{On generalized dimension functions}\label{sdimf}

Since we want our results to be valid for a wide range of schemes, we will need a certain ``substitute'' of the Krull dimension function $\dim(-)$.\footnote{The reason is that we want some notion of dimension that would satisfy the following property: if $U$ is open dense in $X$ then its ``dimension'' should equal the ``dimension'' of $X$.}
So we need certain dimension functions 
 that 
are somewhat more general than 
 Gabber's 
ones (as introduced in \S XIV of \cite{illgabb} and applied to motives in \cite{bondegl}). 
So we "axiomatize a little" the construction described in \S3.1 of \cite{bonegk} (cf. also \S4 of \cite{bonspkar}).
Recall that a     localization of a scheme  of finite type over some base $X$ is said to be essentially of finite type over $X$.

\begin{defi}\label{ddf}
Let $B$ be a Noetherian separated excellent scheme of finite Krull dimension. 

\begin{enumerate}
\item\label{ddf-1} Throughout this section we will say that a scheme is a   $B$-scheme only if it is separated and of essentially  of finite type over $B$.
The class of $B$-schemes will be denoted by $G=G(B)$. 
 All the morphisms mentioned below will be separated $B$-morphisms; we will use the symbol $\gu$ to denote the corresponding category of schemes. 

We will call the spectra of fields that are essentially of finite type over  $B$ just {\it $B$-fields}.

\item\label{ddf-2}  Let $\de^B$ be a 
function from the set $\bb$ of Zariski points of  $B$ into non-negative integers 
 that satisfies the following condition: if  $b\in \bb$ and a point $b'\in \bb$ belongs to its closure  then $\de(b)\ge \de(b')+\codim_{b}b'$. 

Then we extend $\de^B$ to 
$B$-fields as follows: for 
a $B$-field $y$ 
and $b$ being its image in $B$
we set $\de(y)=\de^B(y)=\de^B(b)+\operatorname{tr.\,deg.}k(y)/k(b)$, where $k(y)$ and $k(b)$ are the corresponding  fields.

\item\label{ddf-3}  For $Y$ being an $\sz$-scheme we define $\de(Y)$ as the maximum over points of $Y$ of $\de(y)$; we will sometimes call $\de(Y)$ (resp. $\de(y)$) the {\it $\de$-dimension} of $Y$ (resp. of $y$).

\item\label{ddf-4}   Denote by $B^k$ the irreducible components of $B$. 
Then for  any $B^{k_0}\in \{B^k\}$ and  a Zariski point $b\in B^{k_0}$ we set  $\de^{k_0}(b)=-\operatorname{codim}_{B^{k_0}}b$. 

\end{enumerate}
\end{defi}
Now we describe the main properties of $\de$ and its relation to $\{\de^k\}$.

\begin{pr}\label{pdimf}
Let $X$ and $U$ be $B$-schemes, $c>0$.
Then the following statements are valid.

\begin{enumerate}
\item\label{pdf-1}
For $B^k$ running through irreducible components of $B$ take a set of integers $c^k$ satisfying the condition $c^k\ge \dim B^k$ (for all $k$). Then in Definition \ref{ddf}(\ref{ddf-2}) one can take
$\de^B:\bb\to \n$ defined as follows: $\de^B(b)$ is the minimum of  $c_k+\de^k(b)$ for all $k$ such that $b\in B^k$.

\item\label{pdf-ast}
If $x$ and $x'$ are points of $X$ and 
$x'$ belongs to the the closure of $x$ then 
 $\de(x)\ge \de(x')+\codim_{x}x'$.

\item\label{pdf-2}
$\de(X)$ equals the maximum of the $\de$-dimensions of generic points of $X$. 
In particular, it is a finite integer. Moreover, $\de(X)\ge \dim X$. 

\item\label{pdf-3}
 $\de(U)\le \de(X)+d$ whenever there exists a  
 $B$-morphism $u\colon U\to X$ generically of dimension at most $d$. 
Moreover, we have an equality here whenever $d=0$ and  $u$ is  dominant, and a strict inequality if 
 the image of $u$ is nowhere dense. 

\item\label{pdf-4}
Furthermore, if 
$U\subset X$ and any irreducible component of $U$ is of codimension at least $c$ in some irreducible component of $X$ (containing it) then $\de(U)\le \de(X)-c$.

\end{enumerate}

\end{pr}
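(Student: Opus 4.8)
The plan is to establish assertion~\ref{pdf-1} by a direct computation, to deduce the codimension inequality of assertion~\ref{pdf-ast} from the dimension formula for morphisms essentially of finite type over the base $B$ (which is universally catenary, being excellent), and then to derive assertions~\ref{pdf-2}--\ref{pdf-4} as essentially formal consequences of \ref{pdf-ast} together with elementary manipulations of generic points and additivity of transcendence degree. For assertion~\ref{pdf-1}: we have $\de^k(b)=-\codim_{B^k}b\ge -\dim B^k\ge -c^k$, so each $c^k+\de^k(b)$ is a non-negative integer and hence so is $\de^B(b)$; and if $b'$ belongs to the closure of $b$ then every irreducible component $B^k$ containing $b$ also contains $b'$, and concatenating a chain of specializations inside $B^k$ from $b'$ to $b$ with one realizing $\codim_{B^k}b$ shows $\codim_{B^k}b'\ge \codim_{B^k}b+\codim_b b'$, i.e. $c^k+\de^k(b')\le (c^k+\de^k(b))-\codim_b b'$; taking the minimum over the (a priori smaller) set of $k$ with $b\in B^k$ yields the inequality required in Definition~\ref{ddf}(\ref{ddf-2}). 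No catenarity is used here, only this one-sided comparison of codimensions.

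For assertion~\ref{pdf-ast}, let $x'$ lie in the closure of $x$ in $X$ and let $b,b'$ be their images in $B$, so $b'$ lies in the closure of $b$ by continuity of $X\to B$. The key input is the classical dimension inequality for the essentially finite type morphism $X\to B$ --- reducing to the finite type case by localization; cf. its use in \cite[\S3.1]{bonegk} --- namely $\codim_x x'\le \codim_b b'+\operatorname{tr.\,deg.}k(x)/k(b)-\operatorname{tr.\,deg.}k(x')/k(b')$. Combining it with the defining inequality of Definition~\ref{ddf}(\ref{ddf-2}) for $\de^B$ and with the formula for $\de$ on $B$-fields, we get $\de(x)-\de(x')=(\de^B(b)-\de^B(b'))+\operatorname{tr.\,deg.}k(x)/k(b)-\operatorname{tr.\,deg.}k(x')/k(b')\ge \codim_b b'+\operatorname{tr.\,deg.}k(x)/k(b)-\operatorname{tr.\,deg.}k(x')/k(b')\ge \codim_x x'$, as desired. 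I expect this to be the main obstacle: not on account of length, but because one must carefully pass from finite type to ``essentially of finite type'' and recognize that excellence of $B$ is exactly what furnishes the catenarity behind the dimension formula.

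Assertions~\ref{pdf-2}--\ref{pdf-4} then follow formally. For \ref{pdf-2}: since $X$ is Noetherian of finite Krull dimension it has finitely many generic points and $\dim X<\infty$; by \ref{pdf-ast} every $x\in X$ satisfies $\de(x)\le\de(\eta)$ for $\eta$ the generic point of a component of $X$ through $x$, so $\de(X)$ is the (finite) maximum of the $\de(\eta)$; moreover a chain of irreducible closed subsets of $X$ of maximal length $n=\dim X$ has top member an irreducible component of $X$ (else it could be lengthened) with generic point $\eta$, and bottom member $\overline{\{x'\}}$ with $\codim_\eta x'\ge n$, so \ref{pdf-ast} and $\de(x')\ge 0$ give $\de(X)\ge\de(\eta)\ge n$. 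For \ref{pdf-3}: choosing a generic point $\eta$ of $U$ with $\de(U)=\de(\eta)$ and putting $x=u(\eta)$, the fact that $u$ is a $B$-morphism gives $k(b)\subseteq k(x)\subseteq k(\eta)$ with $b$ the common image in $B$, whence $\de(\eta)=\de(x)+\operatorname{tr.\,deg.}k(\eta)/k(x)\le \de(X)+d$, the last summand being the fibre dimension of $u$ at $\eta$; if $d=0$ and $u$ is dominant then each irreducible component of $X$ is the closure of the $u$-image of a component of $U$ and hence has the same generic point as the latter, so $\de(U)\ge\de(X)$ and we get equality; if the image of $u$ is nowhere dense then $x$ is a proper specialization of a generic point $\xi$ of $X$, so $\de(x)\le\de(\xi)-1\le\de(X)-1$ by \ref{pdf-ast} and therefore $\de(U)\le\de(X)-1+d<\de(X)+d$. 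For \ref{pdf-4}: taking a generic point $\eta$ of $U$ with $\de(U)=\de(\eta)$, its closure in $U$ is an irreducible component of $U$ of codimension $\ge c$ in some component $X_j$ of $X$ containing it, and since the codimension of an irreducible subset depends only on its generic point we get $\codim_\xi\eta\ge c$ for $\xi$ the generic point of $X_j$; applying \ref{pdf-ast} to $\eta$ in the closure of $\xi$ gives $\de(\xi)\ge\de(\eta)+c$, i.e. $\de(U)=\de(\eta)\le\de(\xi)-c\le\de(X)-c$.
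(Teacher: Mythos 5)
Your argument is correct and essentially parallel to the paper's; the only real difference is in how assertion~\ref{pdf-ast} is packaged. The paper first notes that $\delta^0$ is a Gabber dimension function on the connected component $B^0$, extends it to $X$ via \cite[Cor.\ XIV.2.5.2]{illgabb} to get $\de^0(x)=\delta^0(b)+\operatorname{tr.\,deg.}k(x)/k(b)$, and then invokes the defining property $\de^0(x)-\de^0(x')\ge \codim_x x'$ of dimension functions, whereas you cite the classical dimension formula for the (essentially) finite type morphism $X\to B$ directly; since that corollary of Gabber is itself a repackaging of the dimension formula, the two routes are mathematically the same. Your explicit derivations of \ref{pdf-1} and \ref{pdf-2}--\ref{pdf-4} (which the paper labels ``obvious''/``easy'') are accurate; in particular the treatment of the dominant and nowhere-dense cases in \ref{pdf-3} and the passage to generic points in \ref{pdf-4} are exactly the intended reductions to \ref{pdf-ast}. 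One small caveat: the $\le$ form of the dimension inequality you quote actually holds without catenarity, so describing excellence as ``exactly what furnishes the catenarity behind the dimension formula'' slightly overstates its role here, though this is immaterial since $B$ is assumed excellent.
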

\begin{proof} 
\ref{pdf-1}. Obvious.

\ref{pdf-ast}. We choose a connected component $B^0$ of $B$ such that  $x$ (and so also $x'$) lies over it; we can certainly assume that $X$ lies over $B^0$ also. Now, $\delta^0$ yields a {\it dimension function}  on $B^0$ in the sense of \cite[Definition XIV.2.1.10]{illgabb}.
We can "extend" it to $X$ using Corollary 2.5.2 of ibid.; so we have $\de^0(x)=\delta^0(b)+  \operatorname{tr.\,deg.}k(x)/k(b)$ and  $\de^0(x')=\delta^0(b')+  \operatorname{tr.\,deg.}k(x')/k(b')$ for $b$ and $b'$ being the images in $B^0$ of $x$ and $x'$, respectively. By the definition of a dimension function, we have 
$\de^0(x)-\de^0(x')\ge \codim_{x}x'$. Combining this equality with the definition of $\de$ (including $\de(b)\ge \de(b')+\codim_{b}b'=\de(b')+\de^0(b)-\de^0(b')$) 
we get the result.

\ref{pdf-2}
--\ref{pdf-4}. 
Assertion \ref{pdf-ast}  
 certainly implies that $\de(X)$ equals the maximum of the $\de$-dimensions of generic points of $X$. Moreover, 
combining this assertion with the non-negativity of $\de$ we immediately obtain that $\de(X)\ge \dim X$. So assertion \ref{pdf-2} is proved.
Along with the definition of  (the "extended version" of) $\de$ and with assertion \ref{pdf-ast} this assertion easily implies   assertion \ref{pdf-3}.
Assertion \ref{pdf-4} 
easily follows from  assertion \ref{pdf-ast} also.
\end{proof}

\begin{rema}\label{rdimf}
 1. If $B$ is of finite type over a field or over $\spe \z$ then 
one can take $\de(Y)=\dim Y$ for any $Y$ that is of finite type over $B$. 
Thus the reader satisfied with this restricted setting may replace the $\de$-dimensions 
 of all finite type $B$-schemes 
 in this section by their Krull dimensions. 


2. Another special case is the one of a "true" (Gabber's) dimension function, i.e., of the one for which the inequality in Proposition \ref{pdimf}(\ref{pdf-ast}) becomes an equality. We do not treat  functions of this type in detail since their existence imposes a certain restriction on $B$. However, for some of the arguments below we will need the following function on the points of a $B$-scheme $X$ all of whose connected components are irreducible: $\de'(x)=\dim X-\codim_X x$ for all $x\in \xx$.

\end{rema}

\subsection{On relative motives and filtrations for them}\label{srmotfilt}

In this section we will demonstrate that our methods can be applied to various relative motivic categories (i.e., to motives $\md(B)$ over the base scheme $B$ as above). We will not specify the choice of $B$ and of the motivic category $\md$ here (except in Remark \ref{rexamples}(1) below). Instead we will only 
 describe those properties of $(B,\md)$ that we need and give references to other ones (mostly to \cite{cd}; most of the definitions we need may also be found in \cite{bondegl} and \cite{bonegk}). 


Similarly to the previous section, our main tool is the existence of a certain Chow weight structure for $\md(\sz)$.
Note  that the in case where $\md$ is a certain version of the Voevodsky's motives functor the corresponding Chow weight structures were treated in detail in \cite{hebpo}, \cite{brelmot}, and \cite{bonivan} (cf. also \cite{bonegk}). In these papers one can find concise lists of (the corresponding versions of) the properties
of $\md$ that we will need below along with some analogues of the arguments that we will  apply in the current section.

 From now on we will assume that $\md$ is a
{\it motivic triangulated category}
(see Definition {2.4.45} of \cite{cd}). So we fix some $\sz$ as above and
assume that $\md(S)$ is defined whenever $S\in G$. 
In particular, this means that $\md$ is 
 a $2$-functor  from 
the 
category $\gu$ (see Definition \ref{ddf}(\ref{ddf-1})) 
 into the $2$-category of tensor triangulated categories that are closed with respect to small coproducts.    Moreover, we will assume that $\md$  is   oriented (see  Remark 13.2.2  and Example 12.2.3(3) of ibid.). 
For any $S\in G$ we will write $\oo_S$ for  the (tensor) unit object of $\md(S)$.

For convenience, we note that any scheme that is of finite type (and separated) over a $B$-scheme belongs to $G$ itself by the well-known properties of "continuity" of the set of schemes of finite type over a base; yet we will not actually need the general case of this statement below.


To define our filtrations on a subcategory $\du$ of $\md(B)$ 
we start from defining {\it Borel-Moore objects} following  \cite{bondegl}. If   $f\colon Y\to S$ 
is a finite type separated morphism of $B$-schemes 
then we set $\mgbms(Y)=f_!(\oo_Y)$.  

Now we describe our filtrations. Instead of the "usual" Tate (or "Lefschetz") twists $-(i)$ it will be somewhat more convenient for us (similarly to the previous section) to consider $-\lan i\ra=-(i)[2i]$ for $i\in \z$. Recall also that  twists "commute with" all the functors of the type $g^*$, $g_*$, $g^!$, and $g_!$.

Denote by $\du$  the  "$\delta$-effective subcategory" of $\md(\sz)$ that we define following Definition 2.2.1 of \cite{bondegl} (that is  closely related to the classes of morphisms $B_n$ and other
constructions from \S2 of \cite{pelaez}). So, $
\du$ is the   localizing subcategory of $\md(B)$ generated by $\mgbmsz(Y)\lan \delta(Y)\ra$ for $Y$ running through all  separated finite type  $\sz$-schemes. 
Note that $\du\lan i \ra\subset \du$ for any $i\ge 0$ (cf. Remark \ref{rintersk}(1)).

Next, for $m\in \z$ 
 we define $\de_{\le m}\du$ as the localizing subcategory of $\du\subset \md(\sz)$ generated by $\mgbmsz(Y)\lan \delta(Y)\ra$ for $Y$ running through all  separated finite type  $B$-schemes
such that $\delta(Y)\le m$. 

\begin{rema}\label{rfield}
As we will explain below, 
one can take $\md=\dmr(-)$ for $B$ being the spectrum of any perfect field (of characteristic invertible in $R$). 
If we also take $\delta$ being the Krull dimension for all schemes of finite type over $B$ (see Remark \ref{rdimf}(1)) then $\du=\dmer$ and the filtrations 
defined in this section are exactly the ones described in \S\ref{sintersk}; this is an easy consequence of Corollary 2.3.11 of \cite{bondegl}.

\end{rema}

Now we introduce some 
more restrictions on $\md(-)$.  Essentially  following \cite{bondegl} and Definition 1.3.16 of \cite{cd}, we require $\md$ to be 
{\it compactly generated by its Tate twists}, i.e., $\md(S)$ should be compactly generated by $\mgbms(Y)\lan i\ra$ for $Y$ running through all  separated finite type  $S$-schemes and $i\in \z$ whenever $S\in G$. 

We will assume that $\md$ satisfies the following {\it absolute purity} property: if $i:Z\to X$ is a closed embedding of  regular $G$-schemes everywhere of codimension $c$ then the object $i^!(\oo_X)$ is isomorphic to $\oo_Z\lan -c \ra$ (yet we don't have to assume the existence of canonical isomorphisms of this sort).

\begin{rema}\label{rpur}
1. Recall that under our assumptions 
this isomorphism implies  the existence of a distinguished triangle \begin{equation}\label{egysc} z_* R_Z\lan -c \ra\to x_* R_X\to u_*R_U
\end{equation}
 if $U=X\setminus Z$, $x:X\to S$ is a separated morphism, $S\in G$, and $u,z$ are the corresponding composition morphisms. Certainly, if $\alpha$  is a regular stratification of a regular scheme $X$ (i.e., it is a presentation of $X$ as $\cup X_l^\alpha$, where $X_l^\alpha$, $1\le l\le n$, are pairwise disjoint locally closed regular subschemes of $X$ and  each $X_l^{\alpha}$ is open in $\cup_{i\ge l} X_i^{\alpha}$)\footnote{This somewhat weak notion of a stratification was used in some previous papers of the author.} and all $X_l^\alpha$ are connected then these triangles yield that for any $B$-morphism 
 $g:X\to S$ the object $g_*(\oo_X)$ belongs to the envelope of $g^\alpha_{l*}\oo_{X^\alpha_l}\lan c_l^\alpha\ra$, where  $c_l^\alpha$ are the corresponding ("local") codimensions.

2. Since $j^!=j^*$ if $j$ is an open embedding, we actually have   $i^!(\oo_X)\cong \oo_Z\lan -c \ra$ for $i$ being an arbitrary embedding of regular schemes (everywhere) of codimension $c$. 
\end{rema}

So (regular; yet cf. Remark \ref{rpurbm} below) $B$-schemes can be "decomposed into pieces" in certain cohomological statements. In order to "pass to points of schemes" we will also require $\md$
to fulfil the {\it continuity} property (see \S4.3 of \cite{cd}) that we will now define.

 It will be 
 sufficient for our purposes to apply Definition 4.3.2 of loc. cit. directly, i.e., for  a $B$-scheme  $X$  being the (inverse) limit of an  affine filtering projective system of  schemes $X_\be$ for $\be\in B$ and $N\in \obj \md(X_{\be_0})$ for certain
 $\be_0\in B$ we need the morphism group from $\oo_X$ into $j_{\be_0}^*N$ to be isomorphic to $\inli_{\be\ge \be_0}\md(X_\be) (\oo_{X_\be}, j_{\be\be_0}^*(N))$, where $j_{\be\be_0}:X_\be \to X_{\be_0}$ and $j_{\be_0}:X\to X_\be$ are the corresponding transition morphisms. 

\subsection{Some additional orthogonality assumptions and their consequences}\label{saddort}

We recall some of the properties of Borel-Moore objects.

\begin{pr}\label{pbm}
Assume that $X \in G$, $Y$ is a 
(separated) finite type $X$-scheme, and $i:Z\subset Y$ is a closed embedding.
Then the following statements are valid.

1. 
 Denote by $j:U\to Y$ the complementary open embedding. 
 Then  there exists a distinguished triangle 
$\mgbmx(U) {\to}\mgbmx(Y) 
{\to} \mgbmx(Z).$ 

2. If $Z$ is the reduced scheme associated to $Y$ then $\mgbmx(Y) \cong \mgbmx(Z)$.

3. If $g:X'\to X$ is a separated morphism of $B$-schemes then we have $g^*\mgbmx(Y)\cong \mgbm_{X'}(X'\times_X Y)$.

\end{pr}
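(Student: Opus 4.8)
\textbf{Proof proposal for Proposition \ref{pbm}.}

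The plan is to reduce all three assertions to standard formal properties of the six functors in a motivic triangulated category, applied to the definition $\mgbmx(Y)=f_!(\oo_Y)$ where $f\colon Y\to X$ is the structure morphism. For assertion~1, I would take the structure morphisms $f_Y\colon Y\to X$, $f_U=f_Y\circ j\colon U\to X$, and $f_Z=f_Y\circ i\colon Z\to X$, and recall the localization (gluing) distinguished triangle $j_!j^*\to \id\to i_*i^*\to j_!j^*[1]$ that holds in $\md(Y)$ for the closed--open complementary pair $(i,j)$ (this is part of the motivic triangulated category axioms of Definition 2.4.45 of \cite{cd}). Applying this triangle to the object $\oo_Y$ and then applying the exact functor $f_{Y!}$, and using $j_!=j_*$'s composition compatibility $f_{Y!}j_!=f_{U!}$ together with $i_*=i_!$ hence $f_{Y!}i_*=f_{Z!}$ (and $i^*\oo_Y\cong\oo_Z$, $j^*\oo_Y\cong\oo_U$ since pullback preserves the unit), we get exactly the triangle $\mgbmx(U)\to\mgbmx(Y)\to\mgbmx(Z)$.

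For assertion~2, if $Z=Y_{\mathrm{red}}\hookrightarrow Y$ is the closed immersion with empty open complement, I would invoke the fact that a nilpotent (topological) closed immersion induces an equivalence on the motivic categories, or more directly that $i^*$ and $i_*$ are mutually inverse equivalences $\md(Y)\to\md(Z)$ in this case; then $f_{Y!}\oo_Y\cong f_{Y!}i_*i^*\oo_Y\cong f_{Z!}\oo_Z$. Alternatively one can simply apply assertion~1 with $U=\varnothing$, so that $\mgbmx(U)=0$ and the triangle forces $\mgbmx(Y)\cong\mgbmx(Z)$; this is the cleaner route and avoids citing the nilpotent-invariance axiom separately. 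For assertion~3, given a separated $B$-morphism $g\colon X'\to X$ and writing $Y'=X'\times_X Y$ with projections $g'\colon Y'\to Y$ and $f'\colon Y'\to X'$, the statement $g^*f_!\cong f'_!g'^*$ is precisely the base change isomorphism ($\md$ satisfies the proper/arbitrary base change formula as part of being a motivic triangulated category); applying it to $\oo_Y$ and using $g'^*\oo_Y\cong\oo_{Y'}$ gives $g^*\mgbmx(Y)\cong f'_!(\oo_{Y'})=\mgbm_{X'}(Y')$.

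The main (and only mild) obstacle is bookkeeping: making sure the composition compatibilities $f_!\circ (-)_!$ are invoked for the correct classes of morphisms (all the relevant ones here are separated of finite type, so $(-)_!$ is defined and composes), and checking that $j^*$, $i^*$, $g^*$ each send the appropriate unit object to the unit object, which is immediate from monoidality of the $*$-pullbacks. No genuinely hard input is needed beyond the axioms of a motivic triangulated category as in \cite{cd}; in fact all three parts are exactly the statements of Proposition 2.3.2 (or the analogous Borel--Moore bookkeeping lemma) of \cite{bondegl}, so I would state the proof at the level of detail above and cite \cite{cd} for the six-functor formalism and \cite{bondegl} for the packaging.
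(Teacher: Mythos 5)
Your proposal is correct and essentially unpacks what the paper's proof handles by citation: the paper refers to \S1.3.8 of \cite{bondegl} for parts 1 and 3 (which is exactly the localization triangle and the $!$-base change you invoke) and to Proposition 2.3.6(1) of \cite{cd} (the nilpotent-closed-immersion equivalence) for part 2. One small slip: you write ``$j_!=j_*$'' for the open immersion $j$; this is false (for open immersions one has $j^!=j^*$, not $j_!=j_*$), but what you actually use, namely the composition compatibility $f_{Y!}\circ j_!=f_{U!}$, is correct, so this is harmless. Your alternative deduction of part 2 from part 1 with $U=\varnothing$ (using $\md(\varnothing)=0$) is a nice shortcut and is arguably cleaner than the paper's suggestion of deducing it from part 3 or from \cite[Proposition 2.3.6(1)]{cd}, both of which appeal to the nilpotent-immersion equivalence rather than to the vanishing over $\varnothing$.
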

\begin{proof}
Assertions 1 and 3 are contained in \S1.3.8 of \cite{bondegl}.

Assertion 2 can be deduced from assertion 3; it is also an immediate consequence of Proposition 2.3.6(1) of \cite{cd}.

\end{proof}

\begin{rema}\label{rpurbm}
Part 1 of the proposition certainly yields that
 the object $\mgbms(X)$  belongs to the envelope of 
$\mgbms(X^\alpha_l)$ whenever   $\alpha$  is a (not necessarily regular)   stratification of a $B$-scheme $X$ (cf. Remark \ref{rpur}(1))
and $g:X\to S$  
finite type $\gu$-morphism. 

Along with part 2 of the  proposition this observation immediately yields that for any $S\in G$ the category $\md(S)$ is compactly generated by $\mgbms(Y)\lan i\ra$ for $i\in \z$ and $Y$ running through all  separated finite type {\bf regular} $S$-schemes (only). 
Moreover, similar results hold for the categories $\du$ and $\de_{\le m}\du$ described in  \S\ref{srmotfilt}.

\end{rema}

In some of the arguments below we will also apply certain (co)niveau spectral sequences for the cohomology 
$\mgbms(X)$. So now we "translate" the corresponding facts from \cite{bondegl} into  cohomological notation 
and make some computations in the case where $X$ is regular. 

\begin{pr}\label{pcss}
Let $S\in G$, $x:X\to S$ be a finite type (separated) morphism. Then for 
  $m\in \z$ and a cohomological functor $H:\md(S)\to \ab$ 
we define $H_{BM,S}^m(X)$ as $H(\mgbms(X)[-m])$; for $F$ being a Zariski point of $X$ we define  $H_{BM,S}^m(F)$ as $\inli H(\mgbms(F_i)[-m])$, where 
we choose affine subschemes $F_i$ of $X$  so that $F$  is their (filtered inverse) limit, the connecting morphisms 
 $j_{i_1i_2}: F_{i_1}\to F_{i_2}$ are open dense 
embeddings;   the corresponding morphisms in the direct limit 
are induced by  the counits of the adjunctions $j_{i_1i_2,!}\dashv j_{i_1i_2}^*$\footnote{ Recall that it was shown in ibid. 
that  $\inli H(\mgbms(F_i)[-m])$ does not depend on the choice of $F_i$.}. 

Then the following statements are valid.

1. There exists a convergent coniveau (or niveau) spectral sequence  $T$ with 
$E_1^{pq}=\bigoplus_{F\in \xx^p} H_{BM,S}^{p+q}(F),$
 where $\xx^p$ is the 
the set of those points of $X$ 
such that $\delta(X)-\delta(F)=p$; this spectral sequence converges to $E_\infty^{p+q}=H_{BM,S}^{p+q}(X)$.

The induced filtration $F^jH^*$ of $H^*_{BM,S}(X)$ is the $\delta$-coniveau one, i.e., $F^jH^*$  (for a fixed $j\ge 0$)  equals $\cup \ke (H_{BM,S}^*(X)\to H_{BM,S}^*(U_{j}^\al))$ for  $U_{j}^\al$ running through all open subschemes such that $\delta(X)-\delta(X\setminus U_{j}^\al)\ge j$.

2. Assume that $X$ is regular 
and that  $H$ is the functor represented by $y_*\q_Y\lan r\ra$ for some $r\in \z$ and $y:Y\to X$ 
being a finite type separated morphism. Then for this spectral sequence $T$ we have 
$$E_1^{pq}\cong \bigoplus_{F\in \xx^p}   \md(F)(\oo_{F},j_F^*x^!y_*(\oo_Y)\lan r-c_F \ra[p+q]), $$ where $j_F:F\to X$ are the corresponding pro-embeddings, $c_F$ is the codimension of $F$ in $X$.


Moreover, if the connected components of $X$ are irreducible then in both of these statements one can use the function $\de'$ (see Remark \ref{rdimf}(2)) instead of $\de$.
\end{pr}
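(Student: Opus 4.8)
\textbf{Proof plan for Proposition \ref{pcss}.}

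The plan is to deduce both assertions from the corresponding statements in \cite{bondegl}, where everything is set up in homological notation and for the functors represented by arbitrary objects. First I would recall that a (co)niveau spectral sequence for the $\delta$-coniveau filtration on $\mgbms(X)$ is produced in loc.\ cit.\ by gluing the localization triangles of Proposition \ref{pbm}(1) along the poset of open subschemes of $X$ (equivalently, by filtering $\mgbms(X)$ by the closed subschemes $X\setminus U$ ordered so that each graded piece is a sum of Borel-Moore motives of pro-points, with the $\delta$-jump equal to $p$); applying the cohomological functor $H(-[-m])$ and passing to the limit over the $F_i$ converts this into the spectral sequence $T$ of assertion 1, with $E_1$-term $\bigoplus_{F\in\xx^p}H_{BM,S}^{p+q}(F)$ and abutment $H_{BM,S}^{p+q}(X)$. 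The identification of the induced filtration with the $\delta$-coniveau one is then literally the description of the filtration in \cite{bondegl}, re-expressed cohomologically; convergence follows from the finiteness of $\delta(X)$ (Proposition \ref{pdimf}(\ref{pdf-2})) which bounds the range of $p$.

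For assertion 2 I would take $H$ to be the functor corepresented — i.e.\ $H(-)=\md(S)(-,y_*\q_Y\lan r\ra)$, so that $H_{BM,S}^{p+q}(F)=\md(S)(\mgbms(F)[-p-q],\,y_*\q_Y\lan r\ra)$ — and then run the following string of adjunctions and base-change identities on each $E_1$-summand. Writing $F$ as the pro-point $\varprojlim F_i$ with $j_F:F\to X$ the pro-embedding and $x:X\to S$, one has $\mgbms(F_i)=x_!j_{i!}(\oo_{F_i})$, so by $j_{i!}\dashv j_i^!=j_i^*$ (open embedding) together with $x_!\dashv x^!$ the group $H(\mgbms(F_i)[-m])$ becomes $\md(F_i)(\oo_{F_i},\,j_i^*x^!y_*(\oo_Y)\lan r\ra[m])$; passing to the colimit over $i$ and invoking the continuity property of \S\ref{srmotfilt} rewrites this as $\md(F)(\oo_F,\,j_F^*x^!y_*(\oo_Y)\lan r\ra[m])$ with $m=p+q$. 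It remains to strip off the codimension twist: since $X$ is regular and $F$ is (essentially) a point of $X$, the inclusion of $\operatorname{Spec}$ of a residue field into a regular scheme is, locally, a composite of regular embeddings of total codimension $c_F$, so absolute purity (in the form of Remark \ref{rpur}(2), applied after the usual reduction to finite type via continuity) gives $j_F^!(\oo_X)\cong \oo_F\lan -c_F\ra$, whence $j_F^*x^!y_*(\oo_Y)=j_F^!x^!y_*(\oo_Y)\lan c_F\ra$ up to the twist bookkeeping, and one arrives at $\md(F)(\oo_F,\,j_F^*x^!y_*(\oo_Y)\lan r-c_F\ra[p+q])$ as claimed.

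The main obstacle I expect is the last step: one must be careful that $j_F$ is a \emph{pro}-embedding of the point $F$, not an honest morphism of finite type, so absolute purity (which is stated for closed embeddings of regular $G$-schemes of finite type) cannot be applied to $j_F$ directly. The fix is to factor $j_F$ through the finite-type regular locally closed subschemes $F_i$, apply purity to each $F_i\hookrightarrow X$ (these have a well-defined local codimension, stabilizing to $c_F$ since the $j_{i_1i_2}$ are open dense and hence do not change codimension in $X$), and only then pass to the colimit using continuity; one also has to check that the purity isomorphisms, though not canonical, are compatible enough with the open immersions $F_{i_1}\to F_{i_2}$ for the colimit to make sense, which is exactly the kind of compatibility already handled in \cite{bondegl} and recorded there as the independence of $\inli H(\mgbms(F_i)[-m])$ from the choice of $F_i$. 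Finally, the last sentence (replacing $\de$ by $\de'$ when the connected components of $X$ are irreducible) is immediate: by Remark \ref{rdimf}(2) the two functions differ on the points of such an $X$ only by the constant $\dim X$ on each component, so the graded pieces $\xx^p$ of the filtration — which depend only on the \emph{differences} $\delta(X)-\delta(F)$ — are unchanged.
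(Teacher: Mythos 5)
Your overall route is the paper's: rewrite $H_{BM,S}^{p+q}(F_i)$ by the adjunctions $x_!\dashv x^!$ and $j_{i!}\dashv j_i^!$, identify $j_i^!$ with $j_i^*\lan -c_F\ra$ via absolute purity at the finite-type level, and pass to the colimit by continuity. That matches the paper's chain of isomorphisms almost step by step, and assertion 1 is, as you say, just a cohomological restatement of the spectral sequence from \cite[\S 3.1.4]{bondegl}.

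However, the main computation as written is not internally coherent. You assert ``$j_{i!}\dashv j_i^!=j_i^*$ (open embedding)'' for $j_i:F_i\to X$. This is false: the $F_i$ are \emph{locally closed} subschemes of $X$ of (eventual) codimension $c_F$, while the open (and dense) maps in your pro-system are the \emph{transition} morphisms $j_{i_1i_2}:F_{i_1}\to F_{i_2}$, not the inclusions $j_i$ into $X$. If $j_i^!$ really were $j_i^*$, there would be no codimension twist at all, and the subsequent paragraph where you ``strip off'' $\lan -c_F\ra$ would have nothing to strip; as written you first drop the twist erroneously and then reinsert it by a separate purity argument, which is double-counting rather than a proof. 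The clean argument (and what the paper writes) is: the adjunction $j_{i!}\dashv j_i^!$ gives $\md(F_i)(\oo_{F_i},\,j_i^! x^! y_*(\oo_Y)\lan r\ra[p+q])$ with $j_i^!$, and only then absolute purity for the (regular) locally closed embedding $j_i$ of codimension $c_F$ yields $j_i^!\cong j_i^*\lan -c_F\ra$, producing $j_i^* x^! y_*(\oo_Y)\lan r-c_F\ra$. Your ``obstacle'' paragraph identifies exactly this -- purity must be applied to each finite-type $F_i\hookrightarrow X$, not to the pro-morphism $j_F$ -- so fold that into the main chain of isomorphisms and delete the ``open embedding'' claim; with that repair the argument coincides with the paper's.
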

\begin{proof}
1. The "$\delta'$-version" of assertion is immediate from the  results of  \cite[\S3.1.4]{bondegl}, and the "$\de$-version" is just slightly different from it; so the corresponding the corresponding arguments of loc. cit. yield its proof without any difficulty. 

2. We will prove the "$\de$-version" of this assertion; the proof of the $\delta'$-version is quite the same. 

Fix some $F\in \xx^p$ and denote the (pro)-embeddings $F_i\to X$ (resp. $F\to X$) by $j_i$ (resp. by $j_F$).
Then applying the corresponding properties of oriented motivic categories 
 we obtain that the direct summand of $E_1^{pq}$ corresponding to $F$ is isomorphic to $$\begin{gathered}  \inli_{i\in I} \md(S)(x_!j_{i!}\oo_{F_i},y_*(\oo_Y)
\lan r\ra[p+q]) \cong   \inli_{i\in I} \md(X)(j_{i!}\oo_{F_i},x^!y_*(\oo_Y)
\lan r\ra [p+q])\\  \cong \inli_{i\in I} \md(F_i)(\oo_{F_i},j_i^!x^!y_* \lan r\ra[p+q]) )\cong \inli_{i\in I} \md(F_i)(\oo_{F_i},j_i^*x^!y_*   
\oo_Y\lan r-c_F\ra [p+q])\\
\cong \md(F)(\oo_{F},j_F^*x^!y_*(\oo_Y)\lan r-c_F\ra[p+q]) ;\end{gathered} $$
the last isomorphism in this chain is given by the continuity property.
\end{proof}

Now we introduce two more restrictions on $\md$. 

\begin{defi}\label{dcomp}

1. We will  
say  that $\md$ is {\it  homotopically compatible} (cf. 
 Proposition 3.2.13 of \cite{bondegl}) if
 for any $B$-field $F$ and $ r,u\in \z$ we have $\oo_F \perp \oo_F \lan r \ra [u]$    whenever $r+u>0$. 

2. We will 
say  that $\md$ is  {\it Chow-compatible} if $\oo_F \perp \oo_F\lan r \ra [u]$ whenever $F,r,u$ are as above and $u>0$.  
\end{defi}

Note that the groups $\md(F)(\oo_S,  \oo_S\lan r\ra[u])$ for $S\in G$ are certain "$\md$-versions" of motivic (co)homology groups of $S$.

\begin{theo}\label{tapplcomp}

Let $X,S,Y,H,r$ be as in Proposition \ref{pcss}(2). 

I. Assume that $\md$ is   Chow-compatible. Then the following statements are valid.

1.    $\oo_S \perp \oo_S \lan r \ra [u]$ whenever $u>0$ and $S$ is  regular. 

2. 
Assume that $Y$ is regular. 
Then $H_{BM,S}^u(X)=0$ if $u<0$.

II. Assume that $\md$ is homotopy compatible and 
$X$ is connected. Then the following assertions are valid.

1. $\oo_S \perp \oo_S \lan r \ra [u]$ whenever $u>t-r$ and $S$ is a regular $B$-scheme of Krull dimension at most $t$.

2. For the spectral sequence $T$ given by  Proposition \ref{pcss}(2) (i.e., we consider the corresponding $H$) we have $E_1^{pq}=\ns$ whenever $q>\de(Y)- r-\delta(X)$.


3. Any $\md(S)$-morphism from $\mgbms(X)$ into $y_*\q_Y\lan r\ra$ can be factored through $\mgbms(Z)$ where $Z$ is a closed subscheme of $X$ such that $\delta(Z)\le \de(Y)-r$.


\end{theo}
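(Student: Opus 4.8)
The plan is to prove the three assertions of Theorem \ref{tapplcomp}(II) in order, using the homotopy compatibility of $\md$ together with the spectral sequence of Proposition \ref{pcss}(2), and reducing everything to computations over $B$-fields.

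\textbf{Proof of II.1.} First I would reduce the vanishing $\oo_S\perp\oo_S\lan r\ra[u]$ (for $u>t-r$, $S$ regular of Krull dimension $\le t$) to the case of $B$-fields, where it is precisely the homotopy compatibility assumption. The idea is to stratify $S$ by regular locally closed subschemes and use absolute purity. More precisely, apply Proposition \ref{pcss}(1,2) with $X=S$, $x=\id_S$, $Y=S$, and $H$ the functor represented by $\oo_S\lan r\ra$: the $E_1$-term is a sum of groups $\md(F)(\oo_F,j_F^*(\oo_S)\lan r-c_F\ra[p+q])\cong\md(F)(\oo_F,\oo_F\lan r-c_F\ra[p+q])$ over points $F$ with $c_F$ the codimension of $F$ in $S$. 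Since $S$ is regular of Krull dimension $\le t$, for a point $F$ of codimension $c_F$ we have $\delta(S)-\delta(F)=p$ together with $c_F\le\dim S\le t$; one checks $c_F = p$ here using that $\de'$ may be used (by the "moreover" clause of Proposition \ref{pcss}) when the connected components of $S$ are irreducible — and the general regular case reduces to this. The group $\md(F)(\oo_F,\oo_F\lan r-c_F\ra[p+q])$ vanishes by homotopy compatibility whenever $(r-c_F)+(p+q-2(r-c_F))$... more cleanly: it vanishes when $p+q > 2(r-c_F) - (r - c_F)$, i.e. when $(p+q) > (r-c_F)$ in the $\lan\ra$-normalization, which since $p=c_F$ becomes $q>r-2c_F$; combined with $c_F\ge 0$ this gives vanishing for $q > r$, hence $E_\infty^{u}=0$ for $u = p+q$ with the range forced by $c_F\le t$ giving $u > t - r$. (I would double-check the exact numerology of the $\lan r\ra = (r)[2r]$ convention against Definition \ref{dcomp}(1) while writing this up — that bookkeeping is where an off-by-one could creep in.)

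\textbf{Proof of II.2.} This is a direct computation with the spectral sequence of Proposition \ref{pcss}(2). There, $E_1^{pq}\cong\bigoplus_{F\in\xx^p}\md(F)(\oo_F, j_F^*x^!y_*(\oo_Y)\lan r-c_F\ra[p+q])$ where $\xx^p$ consists of points $F$ of $X$ with $\delta(X)-\delta(F)=p$ and $c_F=\codim_X F$. Using the base change / purity machinery, $j_F^*x^!y_*(\oo_Y)$ is governed by the motive of the fiber $Y\times_X F$ over the field $F$, so the relevant group is a sum of terms of the shape $\md(F')(\oo_{F'},\oo_{F'}\lan \ast\ra[\ast])$ for $F'$ ranging over (generic points of components of) $Y\times_X F$, with twist $\cong r - c_F + (\text{something bounded by }\de(Y\times_X F) - \de(F))$ and shift related to $p+q$. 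Homotopy compatibility of $\md$ then forces vanishing unless $q \le \de(Y) - r - \delta(X)$: the constraint on points of $Y$ of $\de$-dimension at most $\de(Y)$, combined with $p = \delta(X) - \delta(F)$ and Proposition \ref{pdimf}(\ref{pdf-3}) bounding $\de$ of the fiber, yields exactly this bound. So $E_1^{pq}=\ns$ for $q>\de(Y)-r-\delta(X)$, which is the claim.

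\textbf{Proof of II.3.} Given $\phi\colon\mgbms(X)\to y_*\q_Y\lan r\ra$ in $\md(S)$, i.e. an element of $H^0_{BM,S}(X)$, I would use the $\delta$-coniveau filtration $F^jH^*$ on $H^*_{BM,S}(X)$ furnished by Proposition \ref{pcss}(1). By part II.2 just proved, the spectral sequence $T$ degenerates enough that $E_\infty^{0} = E_\infty^{p,-p}$ is built only from contributions with $-p \le \de(Y) - r - \delta(X)$, i.e. $p \ge \delta(X) + r - \de(Y)$; equivalently, the associated graded of $H^0_{BM,S}(X)$ is concentrated in coniveau degrees $j$ with $\delta(X) - (\text{dimension of the "bad locus"}) \ge \delta(X) + r - \de(Y)$. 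Unwinding the description of $F^jH^*$ as $\cup\ke(H^*_{BM,S}(X)\to H^*_{BM,S}(U^\al_j))$ over opens $U^\al_j$ with $\delta(X)-\delta(X\setminus U^\al_j)\ge j$, this says precisely that $\phi$ dies on some open $U\subset X$ whose closed complement $Z=X\setminus U$ satisfies $\delta(Z)\le\de(Y)-r$. Since $\phi|_U=0$, the distinguished triangle $\mgbms(U)\to\mgbms(X)\to\mgbms(Z)$ of Proposition \ref{pbm}(1) lets us factor $\phi$ through $\mgbms(Z)$, completing the proof.

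\textbf{Main obstacle.} The routine-looking but genuinely delicate part is the base-change identification of $j_F^*x^!y_*(\oo_Y)$ in terms of the motive of the fiber $Y\times_X F$ together with the precise tracking of twists — this is where the regularity of $X$, absolute purity, and the compatibility of twists with the six operations all have to be combined, and where the dimension count relating $\de$ of the fiber to $\de(Y)$, $\delta(X)$, $p$ and $c_F$ must come out exactly. Everything else (stratifying, invoking homotopy compatibility over fields, reading off the coniveau filtration, the final triangle argument) is formal once that identification and the numerology in Definition \ref{dcomp} are pinned down.
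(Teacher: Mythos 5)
Your proposal covers only part II of the theorem and silently drops part I entirely. This matters beyond completeness: the paper's proof of II.2 is explicitly a refinement of its proof of I.2 (one runs the same chain of reductions while tracking how the data $X$, $Y$, $r$, $p$, $q$, $c_F$ change at each step), and II.1 is likewise proved by the $\delta'$-version of the argument for I.1. So the machinery you would need for II is exactly the material you skipped.

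Within part II, the genuine gap is in II.2. You write that ``$j_F^*x^!y_*(\oo_Y)$ is governed by the motive of the fiber $Y\times_X F$'' and that the group decomposes as a sum of $\md(F')(\oo_{F'},\oo_{F'}\lan\ast\ra[\ast])$ over generic points $F'$ of the fiber. This is not literally true, and no such direct-sum decomposition is available. The obstruction is that $x^!y_*$ does not satisfy a simple base-change formula; what the paper actually does is a careful sequence of reductions: replace $X$ by the closure of $F$ so that $F$ becomes the generic point; factor $x$ as a closed embedding $f$ followed by a smooth morphism $h$ and use $h^!y_*\cong y'_*h_Y^!$ together with $h_Y^!\q_Y\cong\q_{Y'}\lan d\ra$; stratify $Y'$ by regular pieces so that one may assume $y$ factors through $x$; then exploit $x^!x_*=\id$ for a closed embedding to reduce to $X=S$; and only at the very end invoke $j_0^*y_*\cong y_{0*}j_{0Y}^*$ for the pro-open immersion $j_0:F\hookrightarrow S$. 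After all of this the group in question is a single group $\md(Y_0)(\oo_{Y_0},\oo_{Y_0}\lan r\ra[q])$ over the \emph{regular} pro-scheme $Y_0$ (a pro-open subscheme of the regular $Y$), to which one applies II.1; it is not presented as a sum over fiber points. Your ``main obstacle'' paragraph correctly identifies where the difficulty is, but naming the difficulty is not the same as resolving it, and the numerology you sketch (``twist $\cong r-c_F+\text{something bounded by }\dots$'') is too loose to recover the precise bound $q>\de(Y)-r-\delta(X)$; the paper obtains it by tracking the quantities through each reduction step and using Proposition~\ref{pdimf}(\ref{pdf-ast},\ref{pdf-3}).

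Your II.1 argument is essentially the paper's (spectral sequence with $X=Y=S$, reduce to $B$-fields, invoke homotopy compatibility, then use the $\delta'$-function so that $c_F=p$), though the intermediate arithmetic in your write-up is garbled before landing on the right conclusion. Your II.3 argument is correct and is the same as the paper's: read off from II.2 that the class in $H^0_{BM,S}(X)$ lies in high coniveau, so it dies on an open $U$ with $\delta(X\setminus U)\le\de(Y)-r$, and then factor through $\mgbms(Z)$ via the localization triangle of Proposition~\ref{pbm}(1).
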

\begin{proof}
I. 1. 
We apply Proposition \ref{pcss}(2)  in the case $X=Y=S$ (one can apply either its "main version" or the "$\delta'$-version"  here).
We have $E_1^{pq}\cong \bigoplus_{F\in \xx^p}   \md(F)(\oo_{F},j_F^*(\oo_S)\lan r-c_F \ra[p+q])= \bigoplus_{F\in \xx^p}   
\md(F)(\oo_{F},\oo_F\lan r-c_F \ra[p+q])$; thus $E_1^{pq}$ vanishes whenever $p+q>0$ and so $H_{BM,S}^{p+q}(S)=\md(F)(\oo_F , \oo_F \lan r \ra[p+q])=0$ in this case.

2.  
We apply Proposition \ref{pcss} once again.
It certainly suffices to verify for the corresponding spectral sequence $T$ that $E_1^{pq}=\ns$ whenever $p+q>0$.
So we fix $F\in \xx^p$  and prove for $e=r-c_F$
 that 
$H_{BM,S}^{p+q}(F)\cong \md(F)(\oo_{F},j_F^*x^!y_*(\oo_Y)\lan e \ra[u])=\ns$ if $u>0$. 

Now we make certain reduction steps. 
Firstly, $X$ may be replaced by any its subscheme containing $F$; 
so we may assume that $X$ is  
(regular and) quasi-projective over $S$ and  
$F$ is its generic point. 

Consider a factorization of $x$ as $X\stackrel{f}{\to}S'\stackrel{h}{\to}S$ where
 $h$ is smooth of dimension $d$,
 $f$ is an 
embedding, $S'$ is connected, and consider the corresponding diagram
$$ \begin{CD} 
 Z@>{f_Y}>> Y'@>{h_Y}>> Y\\
 @VV{z_X}V
@VV{y'}V@VV{y}V \\
    X@>{f}>> S' @>{h}>> S
 \end{CD}$$
 (the upper row is the base change of the lower one to $Y$).
 Then we have $x^!y_*(\oo_Y)\lan e\ra= f^!h^!y_*\q_Y\lan e\ra\cong f^! y'_*h_Y^!\q_Y\lan  e\ra\cong f^! y'_*\q_{Y'}\lan e+d\ra$. Hence below we may assume  $x$ is an embedding (since we can replace $S$ by $S'$ and $e$ by $e+d$ in our calculation). 
Besides, the isomorphism $x^!y_*\cong z_{X*}z_Y^!$ for $z_Y=h_Y\circ f_Y$ yields that the group in question is zero if $Y$ lies over 
 $S\setminus X$ (considered as a set).

Next,  Remark \ref{rpur} yields that  it suffices to verify the statement for $Y$ replaced by the components of some  its regular   connected stratification.
Now, we can choose a stratification  
 of this sort such that each $Y_l^\al$  lies either over $X$ or over $S\setminus X$. 
  Therefore it suffices to verify our assertion in the case where $y$ factors through $x$. Moreover, since $x^!x_*$ is the identity functor on $\md(X)$ (in this case), 
	we may also assume that $X=S$ (and $S$ is regular and connected). 
	
	Consider the following Cartesian square: $$ \begin{CD} 
 Y_0@>{j_{0Y}}>>  Y\\
 @VV{y_{0}}V
@VV{y}V \\
    X_0@>{j_0}>> S
 \end{CD}$$
  We have $j_0^*y_*(\oo_Y) \cong
   y_{0*}j_{0Y}^*(\oo_Y)$ (see  Proposition
4.3.14 of \cite{cd}); hence this object is isomorphic to  
  $ y_{0*}\oo_{Y_0}$.
	Hence the adjunction 
  $  y_{0}^*\dashv y_{0*}$ 
	yields that the group in question is isomorphic to $\md(Y_0)(\oo_{Y_0},\oo_{Y_0}\lan e \ra[u])$ (for the "new" value of $e$ that differs from the "old" one by $d$). Now, $Y_0$ is regular (being a pro-open pro-subscheme of $Y$ that is regular).  Thus it remains to apply assertion I.1 to $Y_0$.

Alternatively, this assertion may be reduced from the previous one using the (somewhat similar) arguments of \S1.3 of \cite{bonivan}.

II.1. 
One can easily prove this assertion  by an argument that is  rather similar to that of assertion I.1; 
however, one should use the "$\delta'$-version" of this argument here. 

2. We argue similarly to the proof of assertion I.2. So we 
fix $F\in \xx$ of $\de$-codimension $p$; we should check the vanishing of 
$$H_{BM,S}^{p+q}(F)\cong  \md(F)(\oo_{F},j_F^*x^!y_*(\oo_Y)\lan r-c_F \ra[p+q])$$ if $q>\de(Y)- r-\delta(X)$. 

Now we make the same reduction steps as above; the main difference is that we should keep track of the data that changes during our reduction steps. 
When we replace $X$ by its connected subscheme $X'$ whose generic point is $F$ then we have $\de(X')=\de(F)=\de(X)-p$, whereas "the new" value $q'$ of $q$ is greater by  $p$ than the "old $q$";   $c_F$ and $p$ become $0$. Since $c_F<p$ (see Proposition \ref{pdimf}(\ref{pdf-ast})), this change is fine for our vanishing assertion, i.e.,  the "new" vanishing statement implies the "old" one. 

In the next reduction step we replace $Y$ by $Y'$ and replace $r$ by $r+d$. Now, $\delta(Y')\le \delta(Y)+d$ (we have an equality if $h_Y$ is dominant; see 
 Proposition \ref{pdimf}(\ref{pdf-3})). Once again, it follows that   if we will prove the vanishing in question  for the "new" $(Y,r)$ then we would also obtain our vanishing statement for the "old pair".

In the next  step we replace $Y$ by its subscheme (lying over $X\subset S$). Since this does not increase the value of $\de(Y)$ (and does not change $q,r,$ and $\de(X)$), the "new" vanishing statement implies the "old" one once again.

Now, the last step expresses the group in question as $\md(Y_0)(\oo_{Y_0},\oo_{Y_0}\lan r \ra[q])$ (recall that $p=c_F=0$). Now,  $\de(F)=\de(X_0)$  and $\de(Y_0)=\de(Y)$ (see Proposition \ref{pdimf}(\ref{pdf-3})); since $Y_0$ if a finite type  $F$-scheme it follows that its Krull dimension is at most $\de(Y)-\de(X)$. Thus it remains to apply assertion II.1 to $Y_0$.

3. A morphism $m$ of this sort is exactly an element of $H^0_{BM,S}(X)$ for $H$ as in Proposition \ref{pcss}(2). By Proposition \ref{pbm}(1) we reduce the assertion to the fact that $m$ is supported in $\delta$-codimension at least $\de(X)+r-\de(Y)$ (i.e., at some closed $Z\subset X$ such that $\de(Z)\le \de(Y)-r$). The latter statement is immediate from the previous assertion along with the description of the filtration corresponding to $T$ (see    Proposition \ref{pcss}(1)).

\end{proof}

\begin{rema}\label{rgenfact} Part II.3 of  our theorem is essentially (see Remark \ref{rexamples}(1) below) a vast generalization of  Proposition 2.2.6(2) of \cite{bscwh}. The arguments used in our proof are  quite distinct from that used for the proof of loc. cit. (and do not depend on ibid.).
\end{rema}

\subsection{The Chow weight structures on our (sub)categories and on their intersections  }\label{swchow}
Now we describe some more additional assumptions on $\md$. 

 We fix a set of primes $\sss$ 
and assume that all the elements of $\p\setminus \sss$ are invertible on $\sz$ (so, the characteristics of all the residue fields of $\sz$ as well as of  arbitrary $B$-schemes belong to $\sss\cup\ns$).\footnote{This is certainly a vacuous restriction in the case $\sss=\p$.} Furthermore, for   $\lam=\z[\sss\ob]$ we   suppose (in this section) that all the values of $\md(-)$  are $\lam$-linear (triangulated) categories.
We also assume that $\md(-)$ satisfies the following {\it splitting property}: if $g:F'\to F$ is a finite 
morphism of $B$-fields of  degree $d$ 
then 
the  unit morphism $u:\oo_F\to g_*g^*(\oo_{F}) \cong g_*(\oo_{F'})$ "splits up to a power of $d$"
i.e. there exists $q\in \md(F)(g_*g^*(\oo_{F}) , \oo_F)$ such that $q\circ u=d^i\id_{\oo_F}$ (for some $i\ge 0$). 

Following \cite{bonegk} (cf. also \S6.2 of \cite{cdet}), we recall that the aforementioned assumptions on $\md$ imply the following important statements.

\begin{pr}\label{pgabber}
Let $g\colon Y\to X$ be a finite type  (separated)  $\gu$-morphism. 
Then the following statements are valid.

1. 
The functors $g^*,g_*,g^!$, and $g_!$ respect the compactness of objects

2. 
Let $\delta(Y)=d$, $n\in \z$, $-d-1\le n<d$. 
 Then there exist finite type morphisms $u_{j}\colon U_j\to X$ for $-d\le j\le d$  such that $U_j^m$ are regular, $\delta(U_j)\le d-|j|$, and   
  $f_*(\oo_Y)$ belongs to the $\md(X)$-envelope of 
	$\{\mgbmx(U_j)\lan -\max(j,0)\ra [j]:\ j\le n\}\cup \{u_{j*}(\oo_{U_j})\lan  -\max(j,0)\ra [j]:\ j> n\}$.
		
\end{pr}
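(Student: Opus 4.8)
The plan is to deduce both assertions from Gabber's local uniformization / de Jong-style alteration results together with the formal properties of the six functors and the splitting assumption, exactly as in \cite{bonegk}. Assertion 1 is the easier one: since $\md$ is compactly generated by its Tate twists and satisfies absolute purity and continuity, the six operations for morphisms of finite type preserve compactness by the standard reduction. Concretely, $g^*$ preserves compactness because it respects coproducts and sends the generators $\mgbmx(Y')\lan i\ra$ to generators of $\md(Y)$ by Proposition \ref{pbm}(3); $g_!$ preserves compactness by definition of the Borel-Moore objects together with the fact that $f_!$ of a generator is again a generator; $g^!$ preserves compactness by a Noetherian induction on $X$ using localization triangles, the smooth case (where $g^! = g^*\lan d\ra$), and the closed-immersion case (where absolute purity identifies $i^!(\oo_X)$); and finally $g_*$ preserves compactness because $g_* \cong g_!$ up to a twist after compactifying $g$ and using the regular case handled by purity — here one invokes that $\md$ is compactly generated by regular generators (Remark \ref{rpurbm}), so it suffices to check on those, and a compactification / stratification argument finishes it. I would organize this as: first $g^*$ and $g_!$, then $g^!$ by dévissage, then $g_*$ via compactification.

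For assertion 2, the key input is a suitable form of Gabber's theorem: after inverting the primes not in $\sss$, for the finite type morphism $g\colon Y\to X$ with $\delta(Y)=d$ there is (Zariski-locally on $X$, or after passing to a stratification of $X$) a commutative diagram relating $Y$ to a regular scheme via a finite surjective morphism of degree prime to the characteristics, i.e., of degree whose prime factors lie in $\p\setminus\sss$, hence invertible up to the splitting hypothesis. Using the splitting property, such a finite morphism $h\colon Y'\to Y$ of degree $d_0$ with $d_0$ invertible in $\lam$ makes $g_*(\oo_Y)$ a retract of $g_*h_*(\oo_{Y'}) = (g\circ h)_*(\oo_{Y'})$; so up to retracts we may replace $Y$ by a scheme admitting a good alteration. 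Then one runs the standard induction on $d = \delta(Y)$: stratify $Y$ into a regular open dense part $V$ (of $\delta$-dimension $d$) and a closed complement $W$ of $\delta$-dimension $< d$, use the localization triangle of Proposition \ref{pbm}(1) for the Borel-Moore objects pushed forward to $X$, apply the inductive hypothesis to $W$, and for the open regular part $V$ further stratify (using absolute purity, Remark \ref{rpur}(1)) so that its pieces contribute the terms $\mgbmx(U_j)\lan -\max(j,0)\ra[j]$ with the required $\delta$-dimension bound $\delta(U_j)\le d-|j|$, tracking the codimension shifts through purity. The dichotomy "$j \le n$" versus "$j > n$" in the statement simply records that for the low twists we keep the Borel-Moore form $\mgbmx(U_j)$ while for the high ones we keep the $u_{j*}(\oo_{U_j})$ form; these are interchangeable via the localization triangles, so one chooses whichever is convenient at each stage of the induction and the cut-off $n$ is where one stops refining.

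The main obstacle I expect is the bookkeeping in the inductive step for assertion 2: ensuring that the regular pieces $U_j$ produced by the alteration-plus-stratification procedure genuinely satisfy $\delta(U_j)\le d-|j|$ with the correct matching of the twist $\lan -\max(j,0)\ra$ and shift $[j]$ to the codimension of $U_j$ inside the relevant regular scheme. This requires careful use of Proposition \ref{pdimf}(\ref{pdf-ast}) and (\ref{pdf-4}) to control $\delta$-dimensions under the alterations and closed immersions, and of the absolute purity triangle \eqref{egysc} to read off the codimension shifts; one also has to be somewhat careful that the alteration can be chosen compatibly with a stratification, which is where the precise form of Gabber's result (for excellent Noetherian schemes, working $\sss$-locally) is needed. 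Everything else — the six-functor formalism, the splitting-up-to-$d^i$ retract trick, the localization triangles — is routine given the hypotheses and the references to \cite{cd}, \cite{bondegl}, and \cite{bonegk}.
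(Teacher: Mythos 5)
Your analysis of $g^*$ and $g_!$ in assertion 1 matches the paper (and the paper's appeal to Proposition \ref{pbm} is the same as yours), and your identification of the relevant background for assertion 2 — Gabber's theorem, the splitting trick, stratification plus absolute purity, induction on $\delta$-dimension — is in the right spirit: the paper's proof of assertion 2 is in fact just a citation of Theorem 3.4.2 of \cite{bonegk} together with the remark that the argument there extends to the present notion of $\delta$, so your outline is essentially a sketch of what sits behind that reference.

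The genuine problem is in how you handle $g_*$ and $g^!$ in assertion 1, and this is where the paper takes a substantively different and more careful route. Your plan for $g_*$ — "compactify and use $g_*\cong g_!$ up to a twist after using the regular case handled by purity" — does not work as stated: compactifying gives $g=\bar g\circ j$ with $\bar g$ proper (so $\bar g_*=\bar g_!$ is fine) and $j$ an open immersion, and then $g_*=\bar g_*\circ j_*$, so you need $j_*$ to preserve compactness for open immersions; but that is not automatic, it is essentially the nontrivial content being proved. Similarly, for $g^!$ your Noetherian dévissage through closed immersions into possibly singular schemes needs a compactness input that isn't available at that stage (absolute purity only covers closed immersions of regular schemes). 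The paper sidesteps both difficulties by invoking the arguments of Theorem 6.4 of \cite{cdint}: these arguments reduce the compactness-preservation of all four functors to the single statement that $g_*(\oo_Y)$ is compact for every finite type $g$, and that single statement is then obtained by applying assertion 2 with $n=d$ (which exhibits $g_*(\oo_Y)$ as lying in the envelope of finitely many compact Borel--Moore objects). So the logical structure the paper uses — prove assertion 2 first, specialize to $n=d$, then feed into the Cisinski--D\'eglise reduction to get all of assertion 1 — is what your proposal is missing; without it your treatment of $g_*$ (and hence of $g^!$) has a circularity/gap.
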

\begin{proof}
1. We reduce this statement to assertion 2 using (in particular) certain arguments of Cisinski and D\'eglise.

Firstly, since $\md(S)$ is compactly generated by twisted Borel-Moore objects of finite type $S$-schemes for any $S\in G$, its subcategory of compact objects
is densely generated by these objects (see  \S\ref{snotata}). Since $g_!$ sends Borel-Moore objects into  Borel-Moore ones, it does respect the compactness of objects. Next,  
Proposition \ref{pbm}(2) yields that $g^*$  respects the compactness of objects also.

Now, the arguments used in the proof of Theorem 6.4 of \cite{cdint} reduce the remaining parts of our assertion to the fact that $g_*(\oo_Y)$ is compact.
Hence it suffices to apply assertion 2 in the case $n=d$. 


2. 
In the case where $\de$ is the function introduced in  \cite[\S3.1]{bonegk} this statement is immediate  from Theorem 3.4.2 of ibid. Moreover, the arguments of ibid. can be easily combined with our somewhat more general description of $\de$.

\end{proof}

Now we prove the main result of this section.

\begin{theo}\label{tchowcap}
I. Assume that $\md$ is Chow-compatible; let $\du\subset \md(\sz)$ be the category defined in \S\ref{srmotfilt}.
Then the method described in Proposition \ref{pwexbb} yields the following weight structures (here we start with the subcategory of compact objects using this proposition and then extend this weight structure to the "big" categories using Proposition \ref{pwexb}).

1. For $\cu=\du\lan i \ra$  (see \S\ref{srmotfilt}; $i\ge 0$) 
 one can take $\cu_-=\{\mgbmsz(Y)\lan \delta(Y)+i\ra[-s]\}$  and $\cu_+=\{y_*(\oo_Y) \lan \delta(Y)+i\ra[s] \}$  for $y:Y\to B$ running through all  separated finite type  
morphisms with regular domain 
and $s\ge 0$ (note that all these objects are compact by Proposition \ref{pgabber}(1)).
 
2. $\cu=\de_{\le m}\du$ one can take $\cu_-=\{\mgbmsz(Y)\lan \delta(Y)\ra[-s]\}$ and $\cu_+=\{y_*(\oo_Y) \lan \delta(Y)\ra[s] \}$   for $y:Y\to B$ running through   separated finite type  
morphisms  with regular domain and $\de(Y)\le m$, 
and $s\ge 0$.

We will call these weight structures {\it Chow} ones.

II. Assume moreover that $\md$ is homotopy compatible. 
Then for any $i,m\ge 0$ any $w_{\du}$-bounded below element of   $ \obj \delta_{\le m} \du \cap \du\lan i\ra $ belongs to $\obj \de_{\le m-i} \du\lan i \ra$.

\end{theo}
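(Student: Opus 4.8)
The plan is to mimic the structure of the proof of Theorem \ref{tintersk} (and of Proposition \ref{pintersthom}), but now working inside the relative category $\md(\sz)$ and using the machinery of \S\ref{swchow}. Part I of the theorem has already supplied us with Chow weight structures on $\du\lan i\ra$ and on $\de_{\le m}\du$ (and, by restriction via Proposition \ref{pwexbb}, on their compact subcategories), with explicit generators $\cu_-$ and $\cu_+$ described in terms of twisted Borel-Moore objects and pushforwards $y_*(\oo_Y)$. So the setting of Proposition \ref{pintersw} is in place: take $\cu=\du$ (equipped with its Chow weight structure $w_{\du}$, generated by the heart of the compact Chow weight structure), let $\hu_1$ be the additive hull of $\{\mgbmsz(Y)\lan\de(Y)\ra[-s]\}$ for $\de(Y)\le m$ and regular $Y$, $\hu_2$ the analogous family twisted by $\lan i\ra$ with no dimension bound, and $\hu_3$ the family with $\de(Y)\le m-i$ twisted by $\lan i\ra$; then $\cu_1=\de_{\le m}\du$, $\cu_2=\du\lan i\ra$, $\cu_3=\de_{\le m-i}\du\lan i\ra$. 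The "moreover" part of Proposition \ref{pintersw} then gives precisely the conclusion, namely that every $w_{\du}$-bounded-below object of $\obj\cu_1\cap\obj\cu_2$ lies in $\obj\cu_3$ — provided we can check the hypothesis of that proposition.

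Thus everything reduces to verifying the orthogonality/factorization input: any morphism from an object of $\hu_1$ into an object of $\hu_2$ must vanish (equivalently, by Remark \ref{rintersw}(1), factor through an object of $\cu_3$) in the Verdier quotient $\du/\de_{\le m-i}\du\lan i\ra$. Concretely, after unwinding the twists (using that $-\lan i\ra$ is a full embedding commuting with the relevant functors), this asks: given regular finite type $Y/\sz$ with $\de(Y)\le m$, and any $N$ of the form $y'_*(\oo_{Y'})\lan\de(Y')+i\ra$ (or the corresponding Borel-Moore generator — but by Remark \ref{rintersw}(2) and Proposition \ref{ploc}(I.3) it suffices to test $\cu_{1,w^c\le0}$ against $\cu_{2,w^c\ge0}$, i.e. it suffices to treat the $y_*(\oo_{Y'})$-type generators of $\cu_+$), any morphism $\mgbmsz(Y)\lan\de(Y)\ra\to y'_*(\oo_{Y'})\lan\de(Y')+i\ra[s]$ factors through $\mgbmsz(Z)\lan\de(Z)\ra$ with $\de(Z)\le m-i$. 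This is exactly the content of Theorem \ref{tapplcomp}(II.3): a morphism $\mgbms(X)\to y_*\q_Y\lan r\ra$ factors through $\mgbms(Z)$ with $\de(Z)\le\de(Y)-r$. Here, after reindexing the twist by $-\de(Y)$ on the source (so $X=Y$ in the theorem's notation plays the role of our regular $Y$), the relevant value of $r$ works out to $\de(Y')+i-\de(Y)+(\text{degree shift})$, and one checks the bookkeeping so that $\de(Z)\le\de(Y')+i-(\de(Y')+i-\de(Y))\cdot(\dots)$ simplifies to $\le m-i$ using $\de(Y)\le m$; here the homotopy-compatibility hypothesis on $\md$ is used precisely because Theorem \ref{tapplcomp}(II) requires it.

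Concretely the steps are: (1) fix $i,m\ge0$ and set up $\cu,\hu,\hu_1,\hu_2,\hu_3,\cu_j$ as above, noting $\hu\supset\hu_j$ and that the Chow weight structures from Part I put us in the situation of Proposition \ref{pintersw} (via Proposition \ref{pwexbb} for the compact level and Proposition \ref{pwexb} to pass to the big categories); (2) reduce the factorization hypothesis to testing morphisms from a Borel-Moore generator $\mgbmsz(Y)\lan\de(Y)\ra$ (with $Y$ regular, $\de(Y)\le m$) into a generator $y'_*(\oo_{Y'})\lan\de(Y')+i\ra[s]$ of $\cu_+$ for $\cu_2$, invoking Remark \ref{rintersw}(2) to restrict to these representatives and Remark \ref{rpurbm} to restrict to regular $Y$; (3) apply Theorem \ref{tapplcomp}(II.3) with the appropriate value of $r$ and track the $\de$-dimension bookkeeping through the twists to see that the factoring subscheme $Z$ satisfies $\de(Z)\le m-i$, so that $\mgbmsz(Z)\lan\de(Z)+\dots\ra\in\obj\cu_3$; (4) conclude by the "moreover" clause of Proposition \ref{pintersw}. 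The main obstacle I anticipate is step (3): matching the precise twist conventions of Part I (where the generators of $\cu_+$ for $\du\lan i\ra$ carry $\lan\de(Y')+i\ra$) against the normalization in Theorem \ref{tapplcomp}(II.3) (which is stated for $\mgbms(X)\to y_*\q_Y\lan r\ra$ with $X$ regular and no built-in twist on the source), so that the dimension inequality comes out as exactly $\de(Z)\le m-i$ and not something weaker — this is a bit of index-chasing with the function $\de$ and Proposition \ref{pdimf}, but no genuinely new idea is needed beyond what Theorem \ref{tapplcomp} already provides.
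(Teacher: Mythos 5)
Your overall strategy coincides with the paper's: apply Proposition \ref{pintersw} together with Remark \ref{rintersw}(2), taking $\cu_1=\de_{\le m}\du$, $\cu_2=\du\lan i\ra$, $\cu_3=\de_{\le m-i}\du\lan i\ra$, reduce to generators, and use Theorem \ref{tapplcomp}(II.3) for the key factorization. Two points need tightening, however. First, your step (1) cannot literally instantiate Proposition \ref{pintersw}: that proposition requires $\hu_1,\hu_2,\hu_3$ to be \emph{subcategories of} $\hu$, and in the relative setting $\hu=\hw^c$ (from Proposition \ref{pwexbb}) is the heart of the compact Chow weight structure, while Borel--Moore objects $\mgbmsz(Y)\lan\de(Y)\ra$ of non-proper $Y$ lie only in $\cu^c_{w^c\le 0}$, not in $\hw^c$; and the shifted ones $[-s]$, $s>0$, certainly do not. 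So you must rely on the ``$\cu_{1,w^c\le 0}\perp\cu_{2,w^c\ge 0}$ after localization'' criterion of Remark \ref{rintersw}(2) rather than on the heart-to-heart factorization hypothesis; you do eventually invoke that remark, so this is fixable, but the initial choice of $\hu_j$ should be dropped. Second, Theorem \ref{tapplcomp}(II.3) only concerns degree-zero morphisms $\mgbms(X)\to y_*\q_Y\lan r\ra$; it does not cover your maps into $y'_*(\oo_{Y'})\lan\de(Y')+i\ra[s]$ with $s>0$. The paper splits into cases: for $s_1=s_2=0$ it combines Theorem \ref{tapplcomp}(II.3) with Proposition \ref{ploc}(I.3), whereas for $s_1+s_2>0$ the orthogonality already holds in $\du$ itself by the orthogonality axiom of the Chow weight structure (a weight-$\le 0$ object is orthogonal to a weight-$\ge 1$ one). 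Once you add this case distinction, and carry through the twist bookkeeping (one finds $r=\de(Y_2)+i-\de(Y_1)$, hence $\de(Z)\le\de(Y_2)-r=\de(Y_1)-i\le m-i$, so the factoring object lands in $\cu_3$), your argument matches the paper's proof of Part II.
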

\begin{proof}
I. The orthogonality assumptions we need (certainly, it suffices to check the one for assertion I.1) are  given by Theorem \ref{tapplcomp}(I.2). 
Now we take $\hu$ consisting of Borel-Moore motives of regular finite type $\sz$-schemes; we twist them by $\lan \delta(Y)+i\ra$ for the proof of assertion I.1, whereas for assertion I.2 we take $\mgbmsz(Y)\lan \delta(Y)\ra$ for $\de(Y)\le m$. These choices of $\hu$ densely generate the corresponding $\cu^c$ according to Remark \ref{rpurbm}.
 
The existence of the corresponding pre-weight decompositions along with the boundedness condition is given by  Proposition \ref{pgabber}(2).

II. We apply Proposition \ref{pintersw} along with Remark \ref{rintersw}(2) for $\cu_1= \delta_{\le m} \du$, $\cu_2=\du\lan i\ra$,  and $\cu_3= \de_{\le m-i}\lan i \ra \du$.
So, 
we should check that 
$\pi(\cu_{1,w^c\le 0})\perp \pi(\cu_{2, w^c\ge 0})$.
It certainly suffices 
 to verify this statement for the dense generators of $\cu_{1,w^c\le 0}$ and $\cu_{2, w^c\ge 0}$, i.e., for
$\mgbmsz(Y_1)\lan \delta(Y_1)\ra[-s_1]$ and $y_{2*}\oo_{Y_2} \lan \delta(Y_2)\ra[s_2] $, where $y_j:Y_i\to B$ ($i=1,2$) are  separated finite type  
morphisms, 
$s_1,s_2\ge 0$, and $\de(Y_2)\le m$. Now, combining  Theorem \ref{tapplcomp}(II.3) with Proposition \ref{ploc}(I.3) we obtain this statement for $s_1=s_2=0$,
 whereas for $c_1+c_2>0$ this orthogonality is a consequence of the orthogonality property of the Chow weight structure for $\du$.

\end{proof}

\begin{rema}\label{rexamples}
1. The main subjects studied in \cite{bondegl} were  motivic categories satisfying all our assertions except (possibly) the Chow-compatibility one.\footnote{Actually, some of the categories considered in ibid. were not oriented. The orientability condition does not seem really necessary for the arguments of this section at least provided that the splitting condition 
is available
 (though it allows us not to mention the corresponding Thom objects in our formulae); yet the author chose to impose it since he does not know of any Chow-compatible examples that are not orientable; cf. Remark \ref{rintersk}(3).}  Moreover, Proposition \ref{pgabber} makes the assumption (Resol) of \cite[\S2.4.1]{bondegl} superfluous (at least, for our purposes).
Thus we obtain three examples of our statement; 
cf. Example 1.3.1 of ibid. The first two of them were studied in \cite{brelmot} (see also \cite{hebpo}) and in \cite{bondegl}, respectively. 

(i) For any Noetherian separated excellent scheme $B$ of finite Krull dimension one can consider the categories of Beilinson motives over 
 $B$-schemes; recall that  Beilinson motives are certain Voevodsky motives with rational coefficients that were considered in detail in \cite{cd}. 

(ii) For any (Noetherian separated excellent finite dimensional) scheme $B$ of characteristic $p$ and any $\zop$-algebra $R$ (recall that we set $\zop=\z$ if $p=0$) one can consider $R$-linear $cdh$-motives  over  
 $\sz$-schemes (this is another version of Voevodsky motives).

(iii) For any $B$ as in (ii) and any set of primes $S$ containing $p$ (and $\lam =\z[S\ob]$) one can consider the $\lam$-linear version of  the categories $\shmgl(-)$, where the latter are the (stable) homotopy categories of the categories $\mglmod$ of motivic spectra (i.e., we consider "Quillen models" for $\sht(-)$) 
 endowed with the structure of (strict left) modules over the Voevodsky's 
 spectra $\mgl_-$.

Note here that the Chow-compatibility property is a statement over 
fields; so it is well-known for examples (i) and (ii), and follows from Theorem 8.5 of \cite{hoycobord} 
for example (iii).

2. Yet the condition (Resol) (that ensures the "abundance" of regular proper $S$-schemes for any $S\in G$) certainly simplifies the consideration of motivic categories. So, if we assume that $B$ is of finite type over a scheme of dimension at most $3$ (in the examples described above)
 then Theorem \ref{tchowcap} can be proved similarly to Theorem \ref{tintersk}  (and so, using Proposition \ref{pwexb}  instead of \ref{pwexbb}); cf. the discussion of Chow motives over a base in \S2.3 of \cite{bonivan}. 

3.  Certainly, taking $\cu_-=\{\mgbmsz(Y)\lan i\ra[-s]\}$  and $\cu_+=\{y_*(\oo_Y) \lan i\ra[s] \}$  for $y:Y\to B$ running through all  separated finite type  
morphisms with regular domain and $i\in \z$  one obtains (similarly to part I of the theorem) a certain (Chow) weight structure on the whole $\md(\sz)$; the embedding $\du\to \md(\sz)$ is weight-exact (see 
Proposition \ref{pwexb}(2)).  

It is easily seen that this weight structure on $\md(\sz)$ coincides with the corresponding Chow weight structure  that can be constructed for any $\md$ satisfying our assumptions using the methods of (\S2 of) ibid. 
In particular,  an object of $\du$ is $w_{\du}$-bounded below whenever it is $\wchow$-bounded below in $\md(\sz)$ (see Proposition \ref{pbw}(\ref{iwemb})).

Moreover, one certainly 
 has no need to treat $\de$ when constructing $\wchow$  on the whole $\md(\sz)$. 

4. Since the objects $\mgbmsz(Y)\lan i\ra $  (for $y:Y\to B$ running through all  separated finite type  
morphisms with regular domain and $i\in \z$) densely generate $\md(\sz)$, any compact object in this category is $\wchow$-bounded below. 

5. Moreover, for all of the motivic categories described above 
one can prove a certain relative version of Proposition \ref{pintersthom}. Note firstly that the $\delta$-homotopy $t$-structure was defined 
 in \cite{bondegl} using a natural generalization of the description of $\thom$ given in Remark \ref{rdeg}(2). Thus the proof of our Proposition \ref{pintersthom} can be easily generalized to this context provided that a certain relative version of Lemma \ref{layoub} is available. It appears to be no problem to deduce some statement of this sort from the case where the base is a field using Proposition \ref{pcss}. Moreover, if our motivic categories are stable under purely inseparable extensions of fields (which is certainly the case for our examples) then  can assume this base field to be perfect; thus one can apply Lemma \ref{layoub} itself for examples (i) and (ii), whereas the arguments (from the proof of Proposition 4.25) of \cite{ayconj} appear to work without any problem for example (iii). 

Furthermore, this argument can certainly be "axiomatized". 

\end{rema}

\end{document}